\documentclass[a4, 12pt, 
]{amsart}

\usepackage{amsthm}

\usepackage[normalem]{ulem} %
\usepackage{graphicx,xcolor}
\usepackage{dsfont}
\usepackage{enumitem}
\usepackage{tikz}
\usepackage{tikz-cd}
\usepackage[utf8]{inputenc}
\usepackage[T1]{fontenc}
\usepackage[all,cmtip]{xy}
\usepackage{todonotes}
\usepackage[titletoc]{appendix}
\usepackage{color}
\usepackage{amsmath,amssymb,amscd,amsfonts}
\usepackage{babel}
\usepackage{amstext}
\usepackage{amsmath}
\usepackage{amsfonts}
\usepackage{latexsym}
\usepackage{ifthen}
\usepackage{xypic}
\usepackage{yfonts}
\usepackage[all,cmtip]{xy}
\xyoption{all}
\usepackage{enumitem}

\usepackage{graphicx,xcolor}

\usepackage[pagebackref]{hyperref}
\hypersetup{
  colorlinks=true,
  linkcolor=magenta,
  urlcolor=red,
  citecolor=blue
}

\setlist[enumerate]{label=(\thethm.\arabic*), before={\setcounter{enumi}{\value{equation}}}, after={\setcounter{equation}{\value{enumi}}}}

\newcommand{\Ker}[0]{\operatorname{Ker}}

\newcommand{\Hom}[0]{\operatorname{Hom}}

\newcommand{\Supp}[0]{\operatorname{Supp}}

\newcommand{\rank}[0]{\operatorname{rank}}
\newcommand{\codim}[0]{\operatorname{codim}}
\newcommand{\Sym}[0]{\operatorname{Sym}}
\newcommand{\GL}[0]{\operatorname{GL}}

\newcommand{\e}{\varepsilon}
\newcommand{\I}[1]{\mathcal{I}(#1)}

\newcommand{\reg}{{\rm{reg}}}

\newcommand{\sat}{{\rm{sat}}}
\newcommand{\SheafHom[1]}{\mathscr{H}\!\!\!\text{\calligra om}_{\,{#1}}}

\newcommand{\cal}[1]{\mathcal{#1}}
\newcommand{\bb}[1]{\mathbb{#1}}

\newcommand{\idelbar}{\sqrt{-1}\,\partial\bar\partial}

\newcommand{\R}{\mathbb{R}}

\newcommand{\ddbar}{\partial\bar{\partial}}

\newcommand{\cF}{\mathcal{F}}
\newcommand{\cE}{\mathcal{E}}
\newcommand{\orb}{\rm{orb}}
\newcommand{\tf}{\rm{tf}}

\newcommand{\cG}{\mathcal{G}}

\newcommand{\ep}{\varepsilon}
\renewcommand{\epsilon}{\varepsilon}

\renewcommand{\ker}{\mathrm{Ker} \,}

\renewcommand{\ge}{\geqslant}
\renewcommand{\le}{\leqslant}
\renewcommand{\leq}{\leqslant}
\renewcommand{\geq}{\geqslant}

\newcommand{\Imm}{\mathrm{Im}}
\newcommand{\Mov}{\mathrm{Mov}}
\newcommand{\Psef}{\mathrm{Psef}}

\newcommand{\C}{\mathbb C}

\newcommand{\rk}{\mathrm{rk}}

\newcommand{\dbar}{\bar \partial}

\DeclareMathOperator{\id}{id} %
\DeclareMathOperator{\tor}{tor} %

\newtheorem{thm}{Theorem}[section]
\newtheorem{lemma}[thm]{Lemma}
\newtheorem{proposition}[thm]{Proposition}
\newtheorem{question}[thm]{Question}

\newtheorem{cor}[thm]{Corollary}

\theoremstyle{remark}

\theoremstyle{definition}

\newtheorem{defn}[thm]{Definition}

\newtheorem{step}{Step}
\newtheorem{setup}[thm]{Setup}

\theoremstyle{definition} 
\newtheorem{case}{Case}
\numberwithin{equation}{thm}

\theoremstyle{remark}
\newtheorem{remark}[thm]{Remark}

\DeclareFontFamily{U}{MnSymbolC}{}
\DeclareSymbolFont{MnSyC}{U}{MnSymbolC}{m}{n}
\DeclareFontShape{U}{MnSymbolC}{m}{n}{
<-6>  MnSymbolC5
<6-7>  MnSymbolC6
<7-8>  MnSymbolC7
<8-9>  MnSymbolC8
<9-10> MnSymbolC9
<10-12> MnSymbolC10
<12->   MnSymbolC12}{}
\DeclareMathSymbol{\intprod}{\mathbin}{MnSyC}{'270}

\makeatletter
\newcommand*{\da@rightarrow}{\mathchar"0\hexnumber@\symAMSa 4B }
\newcommand*{\da@leftarrow}{\mathchar"0\hexnumber@\symAMSa 4C }
\newcommand*{\xdashrightarrow}[2][]{%
  \mathrel{%
    \mathpalette{\da@xarrow{#1}{#2}{}\da@rightarrow{\,}{}}{}%
  }%
}
\newcommand{\xdashleftarrow}[2][]{%
  \mathrel{%
    \mathpalette{\da@xarrow{#1}{#2}\da@leftarrow{}{}{\,}}{}%
  }%
}
\newcommand*{\da@xarrow}[7]{%
  \sbox0{$\ifx#7\scriptstyle\scriptscriptstyle\else\scriptstyle\fi#5#1#6\m@th$}%
  \sbox2{$\ifx#7\scriptstyle\scriptscriptstyle\else\scriptstyle\fi#5#2#6\m@th$}%
  \sbox4{$#7\dabar@\m@th$}%
  \dimen@=\wd0 %
  \ifdim\wd2 >\dimen@
    \dimen@=\wd2 %
  \fi
  \count@=2 %
  \def\da@bars{\dabar@\dabar@}%
  \@whiledim\count@\wd4<\dimen@\do{%
    \advance\count@\@ne
    \expandafter\def\expandafter\da@bars\expandafter{%
      \da@bars
      \dabar@ 
    }%
  }%
  \mathrel{#3}%
  \mathrel{%
    \mathop{\da@bars}\limits
    \ifx\\#1\\%
    \else
      _{\copy0}%
    \fi
    \ifx\\#2\\%
    \else
      ^{\copy2}%
    \fi
  }%
  \mathrel{#4}%
}
\makeatother

\usepackage{geometry}
\title[A flatness criteriaon for pseudo-effective sheaves]
{A flatness criterion \\
for pseudo-effective sheaves \\
on compact K\"ahler spaces}

\author{Junyan CAO}
\address{Laboratoire de Math\'ematiques J.A. Dieudonn\'e UMR 7351 CNRS, Universit\'e C\^{o}te d'Azur Parc Valrose 06108, Nice, France} 
\email{\tt junyan.cao@univ-cotedazur.fr}
	\email{\tt junyan.cao@unice.fr}

\author{Ya DENG}
\address{CNRS,  
		Institut de Math\'ematiques de Jussieu-Paris Rive Gauche,
		Sorbonne Universit\'e, Campus Pierre et Marie Curie,
		4 place Jussieu, 75252 Paris Cedex 05, France}
\email{\tt ya.deng@math.cnrs.fr}
\email{\tt deng@imj-prg.fr}

\author{Shin-ichi MATSUMURA}
\address{
Mathematical Institute 
$\&$ Division for the Establishment of Frontier Science of Organization for Advanced Studies, 
Tohoku University, 
6-3, Aramaki Aza-Aoba, Aoba-ku, Sendai 980-8578, Japan.}
\email{\tt mshinichi-math@tohoku.ac.jp}
\email{\tt mshinichi0@gmail.com}

\date{\today, version 0.01}

\subjclass[2020]{Primary 53C25, Secondary 32Q10, 14M22.}

\keywords
{Flat vector bundles, 
linear representations, 
Compact K\"ahler spaces,
Pseudo-effective sheaves,
Hermitian-Einstein metrics,
Singular Hermitian metrics on sheaves,
Maximally quasi-\'etale covers,
Fundamental groups.}

\begin{document}

\maketitle

\begin{abstract}
In this paper, we prove that if $\mathcal{E}$ is a pseudo-effective sheaf on a klt compact K\"ahler space $X$ 
whose first Chern class vanishes, 
then, after passing to a finite quasi-\'etale cover, the reflexive pullback of $\mathcal{E}$ is locally free and flat.
This extends the flatness criterion of H\"oring--Peternell, originally established for projective varieties, to the K\"ahler setting.

The proof relies on two main ingredients, both of which are new even in the projective case.
The first is a flatness theorem for stable sheaves:
we show that a slope-stable pseudo-effective sheaf with vanishing first Chern class is Hermitian flat.
This is obtained by combining Hermitian--Einstein theory with the subharmonicity properties of direct image sheaves.
The second is a singular K\"ahler analogue of Simpson's flatness theorem for extensions of locally free Hermitian flat sheaves.
\end{abstract}

\section{Introduction}\label{sec-intro}

In complex geometry, a fundamental theme is to extract a \textit{flat part} from ``semipositive'' curvature conditions
in order to understand the global structure of the underlying spaces.
A guiding principle, going back to the celebrated work of Demailly--Peternell--Schneider \cite{DPS94},
is that ``semipositively curved'' vector bundles with vanishing first Chern class should be flat.
This principle is one of the key ingredients in the structure theorem for compact K\"ahler manifolds with nef tangent bundle.

There are several ways to formulate semipositivity.
In the projective setting, positivity in the sense of Viehweg is one of the most useful notions of semipositivity,
especially in the classification theory of projective varieties.
For sheaves on normal compact K\"ahler spaces, the notion relevant to our purposes is \textit{pseudo-effectivity}
(see Definition~\ref{def-psef} and Remark~\ref{rem-hp}).
This is a weak positivity property which arises naturally in complex geometry,
particularly in the study of structure theorems for varieties with non-negative curvature;
see, for example, \cite{CH19, EIM23, GKP21, GKP22, LPT18, Mat22, MW25b, Wan22}, among many others.
Also, in recent years, the Minimal Model Program in the K\"ahler setting has also seen substantial progress
\cite{Das23, DH25, DHP24, DO24, Fuj22, GP25, HP16, HP18}.
These developments make it natural and important to extend the principle of Demailly--Peternell--Schneider
to pseudo-effective sheaves on singular K\"ahler spaces, and such an extension is expected to have various applications
(see \cite{Mat25a, MQ25, MWWZ25}).

The purpose of this paper is to establish two flatness criteria for pseudo-effective sheaves
on compact K\"ahler spaces with mild singularities; see Theorem~\ref{thm-main} and Theorem~\ref{thm-stable} below.

\begin{thm}\label{thm-main}
Let $X$ be a compact K\"ahler space with potentially klt singularities
$($i.e.\,there exists an effective $\mathbb{Q}$-divisor $\Delta$ such that $(X, \Delta)$ is a klt pair$)$.
Let $\mathcal{E}$ be a pseudo-effective sheaf on $X$ whose first Chern class vanishes.
Then the following assertions hold$:$
\begin{itemize}
\item[$\rm(i)$] There exists a finite quasi-\'etale cover $\nu \colon X' \to X$ such that the reflexive pullback
$\nu^{[*]}\mathcal{E}:=(\nu^{*}\mathcal{E})^{**}$ is a numerically flat locally free sheaf on $X'$.

\item[$\rm(ii)$] 
The restriction of the reflexive hull $\mathcal{E}^{**}$ to the smooth locus $X_{\reg}$ is a flat locally free sheaf on $X_{\reg}$.
\end{itemize}
\end{thm}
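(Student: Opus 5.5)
The plan is to reduce to the two ingredients announced in the abstract: the flatness theorem for stable pseudo-effective sheaves with vanishing first Chern class (Theorem~\ref{thm-stable}), and the singular K\"ahler version of Simpson's theorem that extensions of locally free Hermitian flat sheaves are flat. We first replace $\mathcal{E}$ by $\mathcal{E}^{**}$; this is harmless, since pseudo-effectivity and $c_1$ are unchanged under taking the reflexive hull (they only depend on the sheaf in codimension one). Next we pass to a maximally quasi-\'etale cover $\nu\colon X'\to X$, which exists for potentially klt K\"ahler spaces. On $X'$, every flat locally free sheaf on $X'_{\reg}$ extends to a flat locally free sheaf on $X'$, because $\pi_1(X'_{\reg})\to\pi_1(X')$ induces an isomorphism on profinite completions and hence on linear representations. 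The reflexive pullback $\nu^{[*]}\mathcal{E}$ is still pseudo-effective with $c_1=0$, since $\nu$ is étale in codimension one.

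We then fix a K\"ahler class $\omega$ on $X'$ and take a Harder--Narasimhan/Jordan--H\"older type filtration
\[
0=\mathcal{E}_0\subset\mathcal{E}_1\subset\cdots\subset\mathcal{E}_k=\nu^{[*]}\mathcal{E}
\]
with torsion-free $\omega$-stable graded pieces $\mathcal{G}_i=\mathcal{E}_i/\mathcal{E}_{i-1}$.

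\textbf{Step 1: the slopes vanish.} Pseudo-effectivity passes to quotients, so the last quotient of $\nu^{[*]}\mathcal{E}$ has $c_1\cdot\omega^{n-1}\ge 0$. Since $c_1(\nu^{[*]}\mathcal{E})=0$, the minimal slope of $\nu^{[*]}\mathcal{E}$ is at most $0$. Together with the pseudo-effectivity of the quotients, this forces every graded piece to have slope $0$. A further argument is needed to get $c_1(\mathcal{G}_i)=0$ as classes, not just slope zero. For this I would use that the determinants $\det\mathcal{G}_i$ are pseudo-effective in a suitable sense: the classes $c_1(\mathcal{E}/\mathcal{E}_{i-1})$ are pseudo-effective and sum appropriately to $0$, so each is zero. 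The subtle point is that pseudo-effectivity of $\mathcal{G}_i$ itself is not automatic, because $\mathcal{G}_i$ is a quotient of a subsheaf. I would argue by induction on $k$, applying everything to the quotient $\mathcal{E}/\mathcal{E}_{k-1}$ first. Each quotient of $\mathcal{E}$ is pseudo-effective, and its kernel inherits $c_1=0$ once the quotient is shown to be flat. One then needs the kernel to be pseudo-effective as well; I would try to get this from the fact that a numerically flat quotient splits off the positivity, using a singular metric on $\mathcal{E}$ restricted to the kernel. This might require working with the maximal destabilizing structure in reverse.

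\textbf{Step 2: stable pieces are flat.} By Theorem~\ref{thm-stable}, each stable pseudo-effective $\mathcal{G}_i$ with vanishing $c_1$ is Hermitian flat on $X'_{\reg}$. By the extension property of $X'$, it is therefore a locally free Hermitian flat sheaf on $X'$. In particular each $\mathcal{G}_i$ is reflexive, and an inductive argument shows that the $\mathcal{E}_i$ are locally free: extensions of locally free sheaves by locally free sheaves are locally free.

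\textbf{Step 3: the extension is flat.} The singular K\"ahler Simpson theorem now shows that $\nu^{[*]}\mathcal{E}$, being an iterated extension of Hermitian flat bundles, is flat. It is thus numerically flat and locally free, which proves (i).

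For (ii), $\nu$ restricted over $X_{\reg}$ is étale, by purity of the branch locus. Hence $\mathcal{E}^{**}|_{X_{\reg}}$ is locally free and has a flat connection obtained by pulling back along the cover and descending. The descent works because the flat connection on $\nu^{[*]}\mathcal{E}$ is canonical: the Simpson-type flat structure is unique, so it is Galois-invariant after passing to a Galois closure.

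The main obstacle I expect is Step 1, namely ensuring that all graded pieces, not only the quotients, are pseudo-effective with $c_1=0$ as classes. I would try to handle it through the induction on quotients described there.
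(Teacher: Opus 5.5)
The most serious gap is in Step~2. Theorem~\ref{thm-stable}, together with the maximally quasi-\'etale property, only tells you that the reflexive hulls $\mathcal{G}_i^{**}$ are flat bundles on $X'$. The inference ``in particular each $\mathcal{G}_i$ is reflexive'' does not follow. A torsion-free quotient of a locally free sheaf by a saturated locally free subsheaf can have locally free reflexive hull without being reflexive: locally on a surface, $\mathcal{O}^{\oplus 2}/\mathcal{O}\cong\mathcal{I}_p$ via the Koszul sequence $0\to\mathcal{O}\to\mathcal{O}^{\oplus 2}\to\mathcal{I}_p\to 0$. So ``extensions of locally free sheaves by locally free sheaves are locally free'' is unavailable, and local freeness of $\nu^{[*]}\mathcal{E}$ --- the main difficulty of (i) --- is never established.

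The paper fills this with Proposition~\ref{prop-ref}:
\begin{itemize}
\item Klt spaces have quotient singularities off a codimension-$3$ subset, so one can work in local smooth quasi-\'etale charts.
\item There, the natural section $\tau$ of $(\Lambda^{[s]}\mathcal{E}[\otimes](\det\mathcal{Q})^{*})^{*}$ defined by the sequence has no zeros. A zero would force the Lelong numbers of $\frac{1}{m}\log|\tau^{m}|_{h_m^{*}}$ to be bounded below uniformly in $m$, contradicting their convergence to a constant.
\item Hence $\mathcal{Q}$ is reflexive, so locally free, outside a closed $Z$ with $\codim Z\geq 3$.
\end{itemize}
Only then is there an extension of bundles on $X'\setminus Z$. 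The paper then proceeds as follows:
\begin{itemize}
\item The extension class extends across $Z$ by an $H^1$-extension theorem for $\codim Z\geq 3$.
\item It lifts to a resolution because klt singularities are rational.
\item Simpson's theorem applies on the resolution.
\item The resulting flat structure on $X'_{\reg}$ extends to $X'$ by Theorem~\ref{thm-maxqetale}, which is what finally gives local freeness.
\end{itemize}
This chain is the ``singular Simpson theorem'' you invoke as a black box in Step~3, and its input is exactly what your Step~2 fails to supply.

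There are two further gaps.

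\emph{Step~1.} You leave open why the kernels, and hence the graded pieces, are pseudo-effective, and your suggested fix cannot work. Restricting a metric on $\mathcal{E}$ to a subsheaf bounds the subsheaf's curvature only from above by that of $\mathcal{E}$, so lower curvature bounds are not inherited by subsheaves. The missing idea is to dualize:
\begin{itemize}
\item Since $c_1=0$, the dual $(\nu^{[*]}\mathcal{E})^{*}$ is pseudo-effective by Proposition~\ref{dualpsef}.
\item Each $\mathcal{E}_i^{*}$ is a generic quotient of it, hence pseudo-effective by Proposition~\ref{genersurjet}.
\item Slope zero and Proposition~\ref{prop-flat}, applied to $\det\mathcal{E}_i^{*}$, give $c_1(\mathcal{E}_i)=0$.
\item One then applies Theorem~\ref{thm-stable} to the stable sheaf $\mathcal{E}_1^{*}$, and repeats with $\mathcal{E}_2/\mathcal{E}_1\subset\nu^{[*]}\mathcal{E}/\mathcal{E}_1$, and so on.
\end{itemize}

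\emph{Part (ii).} The claim that ``the Simpson-type flat structure is unique'' is not true in any sense that gives descent, for three reasons:
\begin{itemize}
\item A holomorphic bundle carries many flat holomorphic connections, e.g.\ $d+\alpha$ on $\mathcal{O}$ for any holomorphic $1$-form $\alpha$.
\item The connection is produced on a resolution, on which the Galois group $G$ need not act.
\item Knowing $g^{*}\nabla\cong\nabla$ abstractly does not make $\nabla$ invariant under the given $G$-linearization of $\nu^{[*]}\mathcal{E}$.
\end{itemize}
The paper proves the needed invariance by hand. It uses $\dbar$- and $D^{(1,0)}$-closed representatives $\eta_{i,j}$, averages the gauge $\phi$ over $G$, and shows via plurisubharmonicity of $|s|^{2}$ that the correction $s$ is holomorphic and parallel (Lemma~\ref{plurharmonic}), inductively along the filtration. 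A repair through functoriality of Simpson's correspondence on an equivariant resolution might be possible, but it would have to be argued.
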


We refer to Definition~\ref{def-psef} for the definition of pseudo-effective sheaves,
and to Section~\ref{numflat} for the definitions of numerical flatness and Hermitian flatness.
Theorem~\ref{thm-main} extends the flatness criterion of H\"oring--Peternell \cite{HP19},
originally proved in the projective case, to the K\"ahler setting.
In the previous work \cite{HP19}, the variety $X$ is assumed to be smooth in codimension $2$,
whereas our result does not require this assumption
(see Remark~\ref{rem-hp} for a detailed comparison).

We briefly explain the proof of Theorem~\ref{thm-main}~(i).
One of the main difficulties is to prove the local freeness of $\mathcal{E}^{**}$.
First, by recent work \cite{Bra21, GKKP11, DHP24, Fuj22}, we may take a maximal quasi-\'etale cover
$\nu\colon X' \to X$
such that every flat locally free sheaf on the smooth locus $X'_{\reg}$ extends to a flat locally free sheaf on $X'$
(see Theorem~\ref{thm-maxqetale}).
The reflexive pullback $\nu^{[*]}\mathcal{E}$ is again pseudo-effective and satisfies
$c_1(\nu^{[*]}\mathcal{E})=0$.
By Proposition~\ref{semistable}, the sheaf $\nu^{[*]}\mathcal{E}$ is slope-semistable.
We may therefore consider the Jordan--H\"older filtration of $\nu^{[*]}\mathcal{E}$:
$$
0 =: \mathcal{E}_0 \subset \mathcal{E}_1 \subset \mathcal{E}_2 \subset \cdots \subset \mathcal{E}_k := \nu^{[*]}\mathcal{E}.
$$
By proving a regularity property for this filtration in Proposition~\ref{prop-ref},
we reduce the problem to showing that each graded quotient is Hermitian flat.
This is precisely the content of Theorem~\ref{thm-stable} below.

For the proof of Theorem~\ref{thm-main}~(ii), it remains to descend the flat connection on
$\nu^{[*]}\mathcal{E}$ to $X_{\reg}$.
This descent is not automatic in our singular setting and requires a variant of Simpson's theorem
on extensions of flat bundles \cite{Sim92}
(see Subsection~\ref{numflat}).
To prove this variant, we use the explicit construction of flat connections in \cite{Den21}.

\medskip

The next theorem gives a flatness criterion for pseudo-effective sheaves
on normal compact K\"ahler spaces, without assuming klt singularities.

\begin{thm}\label{thm-stable}
Let $X$ be a normal compact K\"ahler space, and let $\mathcal{E}$ be a pseudo-effective sheaf on $X$
whose first Chern class vanishes.
If $\mathcal{E}$ is slope-stable with respect to some K\"ahler class $\omega$,
then the restriction of the reflexive hull $\mathcal{E}^{**}$ to the smooth locus $X_{\reg}$ is Hermitian flat.
\end{thm}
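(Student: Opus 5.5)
We would combine the Hermitian--Einstein theory for stable reflexive sheaves on singular Kähler spaces with the positivity coming from pseudo-effectivity. Since $\mathcal{E}$ is slope-stable with respect to $\omega$, so is $\mathcal{E}^{**}$. First pass to a resolution $\pi\colon \widetilde X\to X$ which is an isomorphism over $X_{\reg}$, and use the known existence results (Bando--Siu type, extended to normal Kähler spaces) to obtain an admissible Hermitian--Einstein metric $h$ on $\mathcal{E}^{**}|_{X_{\reg}}$. This metric is taken with respect to a Kähler form in the class $\omega$, has $L^2$ curvature, and has constant Einstein factor. Because $c_1(\mathcal{E})=0$, the slope vanishes, so $\Lambda_\omega F_h=0$. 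The goal is to show $F_h\equiv 0$.

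Next we translate pseudo-effectivity into a curvature statement. By Definition~\ref{def-psef}, for every $\epsilon>0$ and every $m$ there are singular metrics on $\Sym^m\mathcal{E}^{**}$ (twisted by a fixed ample/Kähler class), or equivalently on $\mathcal{O}_{\mathbb P(\mathcal{E})}(1)$, whose curvature is at least $-\epsilon\,\omega$. Dually, sections of $\Sym^m\mathcal{E}^{*}\otimes A$ for small twists control the negative directions. The key comparison is as follows. Consider the projectivization $p\colon\mathbb P(\mathcal{E}^{**}|_{X_{\reg}})\to X_{\reg}$ with the Hermitian--Einstein-induced metric $h_{FS}$ on $\mathcal{O}(1)$. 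The psef metrics give weights $\varphi_\epsilon$ with $i\Theta_{h_{FS}}+dd^c\varphi_\epsilon\ge -\epsilon\,p^*\omega$. Pushing forward via fiber integration, we form the direct image $p_*\mathcal{O}(m)=\Sym^m\mathcal{E}$ with its Narasimhan--Simha type metric. Subharmonicity of direct images (Berndtsson--Păun) then shows that the function $\log|s|$, defined for a local "positive" section attached to $\varphi_\epsilon$, is quasi-subharmonic up to an $O(\epsilon)$ error. Combining this with $\Lambda_\omega F_h=0$, we obtain that the fiber average of $\varphi_\epsilon$ is almost $\omega$-harmonic on $X_{\reg}$. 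By the Hermitian--Einstein condition, the integral of $\langle i\Lambda F_h v,v\rangle$ vanishes.

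The main step, and the expected obstacle, is an integration-by-parts identity of the form
\[
\int_{\mathbb P(\mathcal{E})} \bigl(i\Theta_{h_{FS}}+dd^c\varphi_\epsilon\bigr)\wedge (\cdots)\wedge p^*\omega^{n-1}
= \int_{X_{\reg}} \operatorname{tr}(i\Lambda F_h)\,\omega^n + O(\epsilon)=O(\epsilon).
\]
Positivity of the integrand up to $\epsilon$, together with letting $\epsilon\to0$, forces the limiting current to be zero. Hence the second fundamental form of $\mathcal{O}(1)$ in horizontal directions vanishes, and therefore $\langle iF_h v,v\rangle\ge0$ for all $v$. Summed with trace zero and $\Lambda F_h=0$, this gives $F_h=0$. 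Two difficulties arise here. The singular set of $X$ and of $\mathcal{E}$ requires cut-off functions with small $L^2$ gradient, which exist since $\codim\ge2$ and $F_h\in L^2$. The weights $\varphi_\epsilon$ are singular, so we would regularize them by Demailly's approximation and control the Lelong-number losses. If this direct route fails, we would instead use the stability of $\mathcal{E}$ to show that any limit of the psef metrics must be induced by $h$, by uniqueness of Hermitian--Einstein metrics. Curvature semipositivity plus $\operatorname{tr}\Lambda F_h=0$ then again gives flatness.

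Finally, $F_h=0$ on $X_{\reg}$ means $(\mathcal{E}^{**}|_{X_{\reg}},h)$ is Hermitian flat, which is the assertion of Theorem~\ref{thm-stable}.
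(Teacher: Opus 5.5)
Your main step is where the argument breaks, and it cannot be repaired using only the inputs you invoke.

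\textbf{What the identity actually gives.} Make it precise with the natural closed choice $(\cdots)=(\sqrt{-1}\Theta_{h_{FS}})^{r-1}$. Split $\sqrt{-1}\Theta_{h_{FS}}=\omega_V+\theta_H$ along the Chern connection of $h$, where $\theta_H$ is horizontal and given pointwise by $F_h$. Then $\Lambda_\omega F_h=0$ gives $\theta_H\wedge p^*\omega^{n-1}=0$. Together with $c_1(\cE)=0$ this gives $\int T\wedge\omega_V^{r-1}\wedge p^*\omega^{n-1}=0$ for every closed positive current $T$ in $c_1(\mathcal{O}(1))$. So the identity only tells you that the limit current is \emph{vertical}. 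It is never zero, since it restricts to the hyperplane class on fibres.

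\textbf{Verticality does not give curvature positivity.} For a singular $T$, verticality does not yield $\langle\sqrt{-1}F_hv,v\rangle\ge0$, and the implication is false. Take the bielliptic surface $X=(C\times C)/\langle\sigma\rangle$, where $C$ is an elliptic curve, $t$ is a $2$-torsion point and $\sigma(x,y)=(x+t,-y)$. Let $q\colon\tilde X=C\times C\to X$ be the quotient map, let $N=\mathcal{O}(\text{diagonal})$, let $M:=N\otimes\sigma^*N^{-1}$ (so $c_1(M)\neq0$), and set $E:=q_*M$. Then:
\begin{itemize}
\item $E$ is stable with $c_1(E)=0$;
\item its Hermitian--Einstein metric pulls back to $M\oplus M^{-1}$ with curvature $\mathrm{diag}(\alpha,-\alpha)\neq0$;
\item up to a $2$-torsion twist, $\mathcal{O}_{\mathbb{P}(E)}(2)$ has a section whose divisor $D$ is the parallel bisection induced by $\mathbb{P}(M)\cup\mathbb{P}(M^{-1})$.
\end{itemize}
Hence $T=\tfrac12[D]\ge0$ satisfies everything your argument uses, including $T\ge-\e p^*\omega$ for all $\e$, and it is vertical. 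Yet $E$ is not Hermitian flat.

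What separates this $E$ from a sheaf that is pseudo-effective in the sense of Definition \ref{def-psef} is that the non-nef locus of $\mathcal{O}(1)$, which contains $D$, dominates $X$. That is exactly the information you discarded when you passed ``equivalently'' to weights on $\mathcal{O}_{\mathbb{P}(\cE)}(1)$.

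Your fallback via uniqueness of Hermitian--Einstein metrics does not work either. Psef metrics satisfy no Einstein equation, and their limits are in general not induced by any metric on $\cE$.

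\textbf{The missing idea.} The paper exploits a finer structure. Via Theorem \ref{thm-extension} and Lemma \ref{lemma-weak}, the metrics $h_\e$ on $L+\Lambda$ are induced by metrics $H_\e$ on $\cE$ itself with $\sqrt{-1}\Theta_{H_\e}\ge-\e\omega_X\otimes\id$. Consequently the limit $h$ has fibrewise weight $\log\sum_i\lambda_i|\zeta_i|^2$ (Proposition \ref{addlemma}). The proof then splits according to whether the Lelong locus $P_+$ of $h$ dominates $X$.
\begin{itemize}
\item If $P_+$ does not dominate $X$, then $\mathcal{I}(h^{r+1})$ is generically trivial. Positivity of direct images gives a positively curved metric on $\cE$, and $c_1(\cE)=0$ with Raufi's theorem gives Hermitian flatness on $X_0$. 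This extends to $X_{\reg}$ because $\pi_1(X_0)\cong\pi_1(X_{\reg})$.
\item If $P_+$ dominates $X$, the fibrewise shape of $h$ produces some $0<\delta<1$ with the following property. Let $g$ be induced by (approximate) Hermitian--Einstein metrics. Then the direct image $\cG_\delta$ twisted by $\mathcal{I}((h^{\delta}g^{1-\delta})^{r+1})$ is a nonzero proper subsheaf of $\cE\otimes\det\cE$. Subharmonicity of direct images under $\sqrt{-1}\Theta\wedge p^*\omega^{n-1}\ge0$ gives it slope $\ge0$, contradicting stability.
\end{itemize}
In the example above, $T$ restricts on each fibre to half the sum of two Dirac masses. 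This is not of the form allowed by Proposition \ref{addlemma} (compare Remark \ref{rem-add}), and your argument cannot detect this distinction.

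\textbf{A further unsupported step.} The paper uses Hermitian--Einstein metrics only through the mean-curvature inequality above, on a resolution and with respect to $\pi^*\omega_X+\e\omega_{\tilde X}$. This avoids your appeal to an admissible Hermitian--Einstein metric on $X_{\reg}$ for an arbitrary normal compact K\"ahler space, for which you give no justification.
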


Before explaining the idea of the proof of Theorem~\ref{thm-stable},
we recall two standard approaches to proving flatness.
The first is to deduce flatness from the stability of all reflexive symmetric powers $\Sym^{[m]} \cE$ 
rather than merely polystability, as in \cite{HP19, CCP19}. 
However, in our situation, the sheaves $\Sym^{[m]} \cE$ need not be stable.
The second approach is to prove the vanishing of the second Chern class and then apply a flatness criterion,
as in \cite{BS94, CCM21, LT18, GKP16b}; see also the recent developments in the K\"ahler setting \cite{FO25, GP25, GP26}.
However, it is not clear how to prove the vanishing of the second Chern class in the present setting,
even when $X$ has terminal singularities.

In the proof of Theorem~\ref{thm-stable}, we avoid using the second Chern class.
Instead, we develop an idea from \cite{CCP19}, namely using the theory of Hermitian--Einstein metrics together with the subharmonicity of direct image sheaves established in \cite{CCP19}. This approach works well in the singular setting and even in the non-K\"ahler setting.
The proof proceeds roughly as follows.
By the definition of pseudo-effective sheaves,
there exist singular metrics $\{h_{\e}\}_{\e >0}$ on $\mathcal{O}_{\mathbb{P}(\mathcal{E})}(1)$
whose curvature is bounded below by $-\e p^{*}\omega_{X}$.
Here $\omega_{X}$ is a K\"ahler form on $X$, and
$p\colon \mathbb{P}(\mathcal{E}) \to X$ is the natural projection with hyperplane bundle
$\mathcal{O}_{\mathbb{P}(\mathcal{E})}(1)$.
Since $\mathcal{E}$ is not locally free in general,
we have to treat $\mathbb{P}(\mathcal{E})$ with some care.
To this end, we establish a characterization of pseudo-effective sheaves
(see Theorem~\ref{thm-extension}),
which is highly nontrivial and is one of the main contributions of this paper.

In this introduction, we keep the following discussion at a heuristic level.
A key point is to study the singularities of a limit metric $h$ on $\mathcal{O}_{\mathbb{P}(\mathcal{E})}(1)$
obtained from the metrics $\{h_{\e}\}_{\e >0}$.
If the singular locus of $h$ is not dominant over $X$,
then the positivity of direct image sheaves implies that the $L^{2}$-metric $H$ on $\mathcal{E}$
has semipositive curvature, using the condition $c_{1}(\mathcal{E})=0$.
Hence, by Raufi's result \cite{Rau15},
the sheaf $\mathcal{E}$ is Hermitian flat and locally free on $X_{\reg}$.

It remains to exclude the opposite case, where the singular locus of $h$ is dominant over $X$.
We do so by constructing a destabilizing subsheaf of $\mathcal{E}$.
For simplicity, assume that $\det \mathcal{E}$ is a trivial invertible sheaf.
We consider the natural inclusion
\begin{align*}
\mathcal{S}:=
&p_{*} \big( \mathcal{O}_{\mathbb{P}(\mathcal{E})}(K_{\mathbb{P}(\mathcal{E})/X} + \mathcal{O}_{\mathbb{P}(\mathcal{E})}(r+1)) \otimes \I{h^{r+1}} \big) \\
\subset&
p_{*} \big( \mathcal{O}_{\mathbb{P}(\mathcal{E})}(K_{\mathbb{P}(\mathcal{E})/X} + \mathcal{O}_{\mathbb{P}(\mathcal{E})}(r+1)) \big) = \mathcal{E},
\end{align*}
where $\mathcal{I}(\bullet)$ is the multiplier ideal sheaf and $r$ is the rank of $\mathcal{E}$.
The $L^{2}$-metric on $\mathcal{S}$ is positively curved, and hence its slope is semipositive.
Therefore, we expect that $\mathcal{S}$, or a closely related direct image sheaf, gives a destabilizing subsheaf of $\mathcal{E}$,
contradicting the stability of $\mathcal{E}$.
The difficulty is that it is not clear a priori that $\mathcal{S}$ is nonzero and properly contained in $\mathcal{E}$.

To overcome this difficulty, we take a smooth metric $g$ on $\mathcal{O}_{\mathbb{P}(\mathcal{E})}(1)$
arising from Hermitian--Einstein metrics, as in \cite{CGNPPW},
and consider the subsheaf
$$
\mathcal{S}_{\delta}:=
p_{*} \big( \mathcal{O}_{\mathbb{P}(\mathcal{E})}(K_{\mathbb{P}(\mathcal{E})/X}) \otimes
\mathcal{O}_{\mathbb{P}(\mathcal{E})}(r+1) \otimes
\I{(h^{1-\delta} \cdot g^{\delta})^{r+1}} \big).
$$
By analyzing the singularities of the metric $h^{1-\delta} \cdot g^{\delta}$
on $\mathcal{O}_{\mathbb{P}(\mathcal{E})}(1)$ in Proposition~\ref{addlemma},
we find some $0<\delta<1$ such that $\mathcal{S}_{\delta}$ is nonzero and properly contained in $\mathcal{E}$.
Moreover, although the curvature of $g$ is not necessarily semipositive,
the slope of $\mathcal{S}_{\delta}$ is still non-negative
by the subharmonicity of direct image sheaves \cite{CCP19}.
This gives a destabilizing subsheaf, contradicting the stability of $\mathcal{E}$.

We emphasize that the argument explained above is new even in the projective setting.
In the projective setting, the flatness criteria of this type, as used for instance in \cite{CH19}, have played an important role
in the study of singular analogues of the Beauville--Bogomolov--Yau decomposition.
Our flatness criterion may provide a more direct proof of such decomposition theorems in the K\"ahler setting. 
Also, one promising application of our flatness criterion is to extend the Beauville--Bogomolov--Yau decomposition
for klt pairs established in \cite{MW25b} to compact K\"ahler spaces.
This is carried out in forthcoming work, which will appear soon.

\medskip
Finally, in Section~\ref{nonkahler}, we apply the techniques developed above to study numerically flat vector bundles
on compact complex manifolds which are not necessarily K\"ahler.
In particular, we extend an important characterization of numerically flat vector bundles on compact K\"ahler manifolds
due to Demailly--Peternell--Schneider \cite[Theorem 1.18]{DPS94}
(see also Theorem~\ref{thm-stable}) to the non-K\"ahler setting.
This gives a positive answer to the question raised in \cite[Remark 1.21]{DPS94}.

\begin{thm}\label{numflatnonkahler}
Let $X$ be a compact complex manifold and let $E$ be a locally free sheaf on $X$.
Assume that $E$ is numerically flat, namely $c_1 (E)=0$ and $\mathcal{O}_{\mathbb P (E)} (1)$ is nef.
Then $E$ admits a filtration by holomorphic subbundles
\begin{equation}\label{filtra1}
\{0\}=E_0\subset E_1\subset\cdots\subset E_k=E
\end{equation}
such that each graded quotient $E_i/E_{i-1}$ is Hermitian flat.
In particular, all Chern classes $c_k(E)$ vanish.
\end{thm}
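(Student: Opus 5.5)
We argue by induction on the rank $r$ of $E$, following the strategy of Demailly--Peternell--Schneider but replacing the Kähler slope stability by stability with respect to a Gauduchon metric. First fix a Gauduchon metric $\omega_G$ on $X$ (it exists by Gauduchon's theorem, since $\omega_G^{n-1}$ is $\partial\bar\partial$-closed). Then the degree $\deg_{\omega_G}\mathcal F=\int_X c_1(\mathcal F,h)\wedge\omega_G^{n-1}$ of a torsion-free coherent sheaf is well defined, independently of the metric, so slope stability makes sense. By Li--Yau and Lübke--Teleman, every $\omega_G$-stable bundle carries a Hermitian--Einstein metric. Since $\mathcal O_{\mathbb P(E)}(1)$ is nef, $E$ is pseudo-effective in the sense of Definition~\ref{def-psef} (with smooth approximating metrics). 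Moreover, every torsion-free quotient $E\to\mathcal Q$ satisfies $\deg_{\omega_G}\mathcal Q\ge0$: $\det\mathcal Q$ is a quotient of $\Lambda^{q}E$, which is again nef, and a nef line bundle has non-negative Gauduchon degree, by integrating the approximating curvature bounds against $\omega_G^{n-1}$ (no closedness of $\omega$ is needed). Together with $c_1(E)=0$, this shows that $E$ is $\omega_G$-semistable, the analogue of Proposition~\ref{semistable}.

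Next we take a stable subsheaf $\mathcal F\subset E$ of minimal rank with $\deg_{\omega_G}\mathcal F=0$; it exists by semistability and is reflexive and saturated. Set $\mathcal Q=E/\mathcal F$, and note that $\deg\mathcal F=0$ forces $\deg\mathcal Q=0$. The core step is to show that $\mathcal F$ is a Hermitian flat subbundle. Here we reuse the argument behind Theorem~\ref{thm-stable}, which the introduction stresses works in the non-Kähler setting. Let $\mathcal G=\mathcal F^{*}$ and dualize, so that $\mathcal G^{**}$ becomes a quotient of $E^*$ away from codimension two. Alternatively, and more cleanly, work with the quotient $\mathcal Q'$ of $E$ of minimal rank and degree zero. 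Such a quotient is stable, and it is pseudo-effective as a quotient of a nef bundle. On the smooth manifold $X$, the direct-image/subharmonicity argument (with $\mathcal S_\delta$ and Proposition~\ref{addlemma}, degrees now taken against $\omega_G^{n-1}$) shows that $\mathcal Q'$ is Hermitian flat on the locus where it is locally free. Raufi's result \cite{Rau15} then makes $\mathcal Q'^{**}$ Hermitian flat. Finally, the natural map $E\to\mathcal Q'^{**}$ is surjective: the degrees agree, and a Hermitian flat bundle has no proper nonzero quotients of degree $0$ that fail surjectivity, which we verify on curves or by the Chern-connection second fundamental form. So $\mathcal Q'$ is a locally free flat quotient, and $E_{k-1}:=\ker(E\to\mathcal Q')$ is a subbundle.

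For the induction, we need $E_{k-1}$ to be numerically flat again. We have $c_1(E_{k-1})=0$, and nefness follows because a subbundle of a nef bundle whose quotient is Hermitian flat (hence numerically flat) is nef: the dual sequence exhibits $E_{k-1}^*$ as a quotient of $E^*$, and nefness of both $E_{k-1}$ and $E_{k-1}^*$ follows from the usual DPS extension lemmas, which hold on arbitrary compact complex manifolds. Induction on rank then gives the filtration \eqref{filtra1}. Each graded piece carries a flat unitary connection, so all its Chern forms vanish, and by Whitney's formula all $c_k(E)$ vanish in Bott--Chern (hence de Rham) cohomology.

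\textbf{Main obstacle.} We expect the main difficulty to be transferring the proof of Theorem~\ref{thm-stable} to a Gauduchon rather than Kähler metric. Two points need care. First, the subharmonicity and positivity of direct images must yield $\deg_{\omega_G}\mathcal S_\delta\ge0$, which requires the curvature currents to be paired with the $\partial\bar\partial$-closed form $\omega_G^{n-1}$ rather than with closed forms. Second, the Hermitian--Einstein metric $g$ used to build $\mathcal S_\delta$ must come from Li--Yau instead of Uhlenbeck--Yau.
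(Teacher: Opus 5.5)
Your overall scheme (a Gauduchon metric, semistability, a stable degree-zero piece, the Hermitian--Einstein/subharmonicity argument, induction on rank) matches the paper. However, the order in which you do things creates a genuine gap. You run the flatness argument on the quotient $\mathcal{Q}'$ \emph{before} knowing that it is locally free. In the non-K\"ahler setting the available tool is Lemma~\ref{nonkahlerherm}, which is proved only for a locally free bundle: there $\mathbb{P}(E)$ is a smooth projective bundle, and the Hermitian--Einstein metric comes directly from Li--Yau.

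For a sheaf that is not locally free, the proof of Theorem~\ref{thm-stable} needs more. It passes to a resolution on which $\pi^*\mathcal{Q}'/\tor$ is locally free. It then relies on the stability of that bundle with respect to the perturbed K\"ahler classes $\pi^*\omega_X+\varepsilon\omega_{\tilde X}$ \cite{CGNPPW, Tom21}, and on Uhlenbeck--Yau for those classes. The paper gives no Gauduchon analogue of these inputs (nor of Bando--Siu), and Li--Yau does not apply to $\mathcal{Q}'$ directly. Smoothness of $X$ does not make $\mathcal{Q}'$ locally free, and your ``main obstacle'' paragraph does not address this point.

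The surjectivity of $E\to\mathcal{Q}'^{**}$ is also justified incorrectly, in three ways:
\begin{itemize}
\item The cokernel of $\mathcal{Q}'\hookrightarrow\mathcal{Q}'^{**}$ is supported in codimension $\ge 2$, so equality of degrees carries no information.
\item A general compact complex manifold may contain no curves, so you cannot test on curves.
\item A second fundamental form argument concerns saturated subsheaves of a Hermitian flat bundle, whereas $\mathcal{Q}'\subset\mathcal{Q}'^{**}$ has full rank.
\end{itemize}

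What works is the non-vanishing argument of \cite{DPS94}, whose proof does not use K\"ahlerness (as the paper notes).
\begin{enumerate}
\item $\det\mathcal{Q}'$ is pseudo-effective, being a generic quotient of $\Lambda^qE$, and has zero Gauduchon degree. So a limit curvature current $T\ge 0$ satisfies $\int_X T\wedge\omega_G^{n-1}=0$, which forces $T=0$. Hence $\det\mathcal{Q}'$ is Hermitian flat and $c_1(\mathcal{Q}')=0$ in $H^{1,1}_{BC}(X,\mathbb{R})$. You never check this, yet Case~1 of Lemma~\ref{nonkahlerherm} needs it.
\item The map $\Lambda^qE\to\det\mathcal{Q}'$ is then a nonzero section of the dual of the nef bundle $\Lambda^qE\otimes(\det\mathcal{Q}')^{*}$. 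By \cite[Proposition~1.16]{DPS94} it vanishes nowhere.
\item By \cite[Lemma~1.20]{DPS94}, $\mathcal{Q}'^{*}\subset E^{*}$ is a subbundle. So $\mathcal{Q}'$ is a locally free quotient of $E$, and it is numerically flat.
\end{enumerate}

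Doing this step first repairs your argument. $\mathcal{Q}'$ is then a stable numerically flat bundle, Lemma~\ref{nonkahlerherm} applies directly, and your induction on $\ker(E\to\mathcal{Q}')$ goes through; your nefness argument for that kernel is fine. This is essentially the paper's route. The paper uses a stable degree-zero \emph{subsheaf} $\mathcal{F}\subset E$, which \cite[Theorem~1.18, Step~1 and Lemma~1.20]{DPS94} make into a numerically flat subbundle. It then applies Lemma~\ref{nonkahlerherm} to $\mathcal{F}$ and inducts on $E/\mathcal{F}$ rather than on a kernel.
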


\subsection*{Acknowledgements.}\label{subsec-ack}
This collaboration began at the 2025 Summer Research Institute in Algebraic Geometry,
and the authors thank the organizers of the relevant sessions, Professors Philippe Eyssidieux, Mihai P\u{a}un, and Christian Schnell.
S.\,M. thanks  Professors Shiyu Zhang and Xiaojun Wu for discussions on Proposition \ref{prop-ref} and
maximal quasi-\'etale covers.
He also thanks  Professors Junayong Wang, Xiaojun Wu, and Qiming Zhang 
for discussions on Lemma \ref{thm-extension}. 
J.\,C. thanks the Institut Universitaire de France for providing
excellent working conditions and Professor Indranil Biswas for raising the question that led to Theorem~\ref{numflatnonkahler}.

S.\,M. was partially supported by JSPS Grant-in-Aid for Scientific Research (B) $\sharp$21H00976 and by the JST FOREST Program $\sharp$PMJFR2368.
J. \,C and Y. \,D acknowledge the support of the French Agence Nationale de la Recherche (ANR)
under reference ANR-21-CE40-0010. 
The authors used generative AI tools solely for grammatical correction and to improve the fluency of the English; all mathematical content was written and verified by the authors.

\section{Tools and Preparations}\label{sec-pre}

In this section, we summarize some basic material and preliminary results.
Throughout this paper, let $X$ be a normal analytic variety of dimension $n$
(i.e.\ an irreducible and reduced complex analytic space with at worst normal singularities),
and let $\mathcal{E}$ be a torsion-free coherent sheaf on $X$ of rank $r$.
Set $X_{0}:=X_{\reg} \cap X_{\mathcal{E}}$,
where $X_{\reg}$ is the smooth locus of $X$ and $X_{\mathcal{E}}$ is the maximal locus on which $\mathcal{E}$ is locally free.
We use the terms \emph{locally free sheaf} and \emph{vector bundle} interchangeably, 
and we reserve the term \emph{vector bundle} for occasions when we want to emphasize the bundle viewpoint.

\subsection{Notation and conventions}\label{subsec-notation}

For a positive integer $m\in\mathbb{Z}_+$, we define
\begin{itemize}
\item the reflexive tensor power $\mathcal{E}^{[\otimes m]} :=(\mathcal{E}^{\otimes m})^{**}$;
\item the reflexive symmetric power $\textup{Sym}^{[m]}\mathcal{E} :=(\textup{Sym}^m\mathcal{E})^{**}$;
\end{itemize}
where $\mathcal{E}^*:= \Hom(\mathcal{E}, \mathcal{O}_X)$ is the dual sheaf.
The reflexive top exterior power $\det\mathcal{E}:= \wedge^{[r]}\mathcal{E}$
is called the \textit{determinant sheaf} of $\mathcal{E}$.
Further, for a torsion-free sheaf $\mathcal{F}$ on $X$,
we define the reflexive tensor product $\mathcal{E} [\otimes]\mathcal{F} := (\mathcal{E} \otimes\mathcal{F} )^{**}$.

Let $f \colon Y \rightarrow X$ be a morphism between normal analytic varieties.
We define the reflexive pullback $f^{[*]}\mathcal{E} := (f^{*}\mathcal{E})^{**}$.
In this paper, we often say that a property (P) holds \textit{over} a subset $U \subset X$ if it holds on the inverse image $f^{-1}(U)$.
For example, we say that the sheaf $\mathcal{G}$ is locally free \textit{over} $X_{0}$ if it is locally free on the inverse image $f^{-1}(X_{0})$.

\subsection{Singular Hermitian metrics and pseudo-effective sheaves}\label{subsec-psef}

We first recall the notion of psh (resp.\ quasi-psh) functions on a normal analytic variety $X$.
See \cite{Dem85} and \cite[Section 4.6]{BG13} for details.

A function $\varphi\colon X \to [-\infty, +\infty[$ is said to be \textit{plurisubharmonic} (psh for short) if there exists an open covering $X=\bigcup_i U_i$ and embeddings $U_i \hookrightarrow \mathbb{C}^N$ such that $\varphi|_{U_i}$ is the restriction of a psh function defined on an open subset of $\mathbb{C}^N$.
A function $\varphi\colon X \to [-\infty, +\infty[$ is said to be \textit{quasi-psh}
if it can be locally written as the sum of a psh function and a smooth function.
Similarly, a $(1,1)$-form $\omega_{X}$ on $X$ is said to be a \textit{K\"ahler form} on $X$ if $\omega_{X}|_{U_i}$ can be written as the restriction of a K\"ahler form defined on an open subset of $\mathbb{C}^N$.
A K\"ahler form $\omega_{X}$ in our sense always admits local potentials
(i.e.\ locally there exists a smooth function $f$ such that $\omega_{X}=\sqrt{-1}\ddbar f$).
Psh functions on normal varieties satisfy the Hartogs-type extension property,
which will often be used in this paper without explicit mention.
\begin{thm}[{\cite[Satz 4]{GR56}}]
\label{thm-gr}
Let $X$ be a normal analytic variety and let $Z \subset X$ be a Zariski closed subset of codimension $\geq 2$.
Then, any psh function $\varphi$ on $X\setminus Z$ uniquely extends to a psh function on $X$.
\end{thm}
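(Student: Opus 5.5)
The argument is local, so first reduce to a single chart. Fix a point $x\in X$ and a neighbourhood $U$ of $x$ with a closed embedding $U\hookrightarrow \Omega\subset\mathbb{C}^N$. By the definition of psh functions on $X$, it suffices to produce on a neighbourhood of each point of $Z\cap U$ a function $\tilde\varphi$ that is psh in the sense of the embedding and agrees with $\varphi$ off $Z$. Uniqueness is immediate: $X\setminus Z$ is dense because $X$ is irreducible and $\codim Z\ge 2$. Moreover, a psh function on a normal space satisfies
\[
\tilde\varphi(z)=\limsup_{X\setminus Z\ni w\to z}\tilde\varphi(w),
\]
since psh functions are determined by their values off a nowhere dense analytic set. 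So the only possible candidate is the upper semicontinuous regularization of this limsup, and the whole task is to show that it is locally bounded above and psh.

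\textbf{Step 1: local boundedness near $Z$.} I would use Riemann extension on normal spaces. Pick a local parametrization, i.e.\ a finite branched covering $\pi\colon U\to D\subset\mathbb{C}^n$ with $D$ a polydisc. Because $\codim Z\ge2$, the image $\pi(Z)$ is an analytic subset of $D$ of codimension $\ge2$. Define the trace-type function
\[
\psi(t):=\max_{y\in\pi^{-1}(t)}\varphi(y).
\]
This is psh on $D\setminus(\pi(Z)\cup B)$, where $B$ is the branch locus: locally there it is a maximum of finitely many psh functions, composed with local inverses of $\pi$. Next, $\psi$ is locally bounded above near generic points of $B\setminus\pi(Z)$, where the sheets come together, because $\varphi$ is psh, hence upper semicontinuous, on all of $X\setminus Z$. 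So $\psi$ extends across the hypersurface part by the classical theorem on removable singularities for psh functions bounded above. The remaining set $\pi(Z)$ has codimension $\ge2$ in the smooth $D$, so the classical Hartogs-type result on $\mathbb{C}^n$ extends $\psi$ across it. In particular $\psi$, and hence $\varphi$, is bounded above near $Z$.

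\textbf{Step 2: psh of the extension.} Set $\tilde\varphi:=\limsup$ regularization, as above. It is upper semicontinuous and bounded above near $Z$. To check plurisubharmonicity on a singular normal space, I would invoke the Forn\ae ss--Narasimhan characterization: an upper semicontinuous function on a reduced space is psh iff its pullback to every holomorphic disc $\Delta\to X$ is subharmonic or $\equiv-\infty$. A disc not contained in $Z$ meets $Z$ in a discrete set. Off that set the pullback is subharmonic, and it is bounded above near the set, so it extends subharmonically across isolated points; the limsup definition ensures the extended value is the right one. A disc contained in $Z$ is handled by approximating it with discs not contained in $Z$ and using upper semicontinuity together with the sub-mean-value inequality. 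Alternatively, one can avoid discs altogether: for weakly normal (in particular normal) spaces, a locally bounded-above function that is psh off a nowhere dense analytic set extends psh, by Grauert--Remmert's original argument. This is the route I would cite for the singular points.

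The main obstacle is Step~1, local boundedness above near points of $Z$ that lie in the singular locus, where no ambient Hartogs argument applies directly. This is exactly where the codimension $\ge2$ hypothesis is used (through $\pi(Z)$), and where normality is used: it guarantees that the extended function is well defined pointwise, since $X$ is locally irreducible and so the limsup does not see separate branches.
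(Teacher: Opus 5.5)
The paper gives no argument here; it simply quotes \cite[Satz~4]{GR56}. Your proposal unpacks that citation into two ingredients. The first is local upper boundedness of $\varphi$ near $Z$. You obtain it by pushing $\varphi$ down to $\psi=\max_{\pi^{-1}(t)}\varphi$ under a finite projection $\pi\colon U\to D$, and then using the removable-singularity and codimension-$2$ Hartogs theorems for psh functions on the polydisc $D$. The second is the Riemann-type extension theorem for psh functions that are locally bounded above off a nowhere dense analytic subset of a normal space.

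Step~1 is sound. One point should be added. To get $\varphi\le\tilde\psi\circ\pi$ on all of $U\setminus Z$, including points lying over $\pi(Z)\cup B$, use two facts: $\varphi$, being psh on $U\setminus Z$, equals its $\limsup$ off the nowhere dense analytic set $\pi^{-1}(\pi(Z)\cup B)\setminus Z$; and the extension $\tilde\psi$ is upper semicontinuous. With the normal-space Riemann extension theorem cited, your argument is a correct reduction.

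You have, however, put the difficulty in the wrong step. Step~1 never uses normality; it works verbatim on any pure-dimensional space. Everything specific to normal spaces sits in Step~2, and your disc-based version of that step is not a proof.

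\begin{itemize}
\item At $\zeta_0$ with $f(\zeta_0)=z_0\in Z$, the sub-mean-value inequality needs $\tilde\varphi(z_0)\le\limsup_{\zeta\to\zeta_0}\tilde\varphi(f(\zeta))$. But $\tilde\varphi(z_0)$ is a $\limsup$ over all of $X\setminus Z$ near $z_0$, and it can exceed the $\limsup$ along a particular disc. Your sentence ``the limsup definition ensures the extended value is the right one'' is exactly what must be proved.
\item Deforming a disc contained in $Z$ into nearby discs with the same centre not contained in $Z$ is not available in general on a singular space.
\end{itemize}

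Your parenthetical extension to weakly normal spaces is false. Take the weakly normal space $X=\{x=0\}\cup\{y=0\}\subset\mathbb{C}^2_x\times\mathbb{C}^2_y$, with $Z=\{0\}$ of codimension $2$. Let $\varphi$ equal $0$ on one punctured plane and $1$ on the other. Then $\varphi$ is psh and bounded on $X\setminus Z$, and Step~1 applies. Yet there is no psh extension: its restriction to each plane would be psh on $\mathbb{C}^2$, hence would have to take the value $0$, respectively $1$, at the origin. An irreducible example with the same local picture comes from identifying two points $a\neq b$ of $\mathbb{C}^2$ and taking $\varphi=\mathrm{Re}\,\ell$ with $\ell(a)=0$, $\ell(b)=1$.

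So the proof stands only through the citation of the Riemann extension theorem for \emph{normal} spaces. If you want to avoid that citation, the disc argument must genuinely use the local irreducibility of $X$ at $z_0$, which is exactly what fails in this example.
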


Using quasi-psh functions,
we review singular Hermitian metrics and pseudo-effectivity for torsion-free sheaves.
Below, we assume familiarity with the basic theory of singular Hermitian metrics on vector bundles (see \cite{Rau15, HPS18, Pau18, PT18}).
We recommend \cite[Sections 16--19]{HPS18} for a detailed exposition.

A singular Hermitian metric $h$ on a torsion free coherent sheaf $\cal{E}$ is a (possibly singular) Hermitian metric on the vector bundle $\mathcal{E}|_{X_{0}}$,
where $X_{0}:=X_{\reg}\cap X_{\mathcal{E}}$.
In particular, outside a subset of measure zero in $X$,
the metric $h(x)$ defines a finite Hermitian inner product on $\cE_x$.
For a smooth $(1,1)$-form $\theta$ on $X$ with local potential,
we write
$$
\sqrt{-1}\Theta_{h} \geq \theta \otimes \id \text{ on } X
$$
if, for any local section $e \in H^0(U, \mathcal{E}^*)$ on an open set $U \subset X$,
the function $\log |e|_{h^{*}} - f$ is psh on $U \cap X_0$,
where $f$ is a local potential of $\theta$ and $h^{*}$ is the induced metric on the dual sheaf
$\mathcal{E}^*$. 
Note that a singular Hermitian metric is only defined on $X_{0}$, even if it is smooth.
Nevertheless, the function $\log |e|_{h^{*}}$ extends to a quasi-psh function across $X \setminus X_{0}$ by Theorem \ref{thm-gr}.

The following definition extends the notion of pseudo-effectivity for vector bundles to torsion-free sheaves.
\begin{defn}[{\cite[Definition 2.1]{Mat23}}]
\label{def-psef}
Let $X$ be a normal compact K\"ahler space and let $\omega_{X}$ be a K\"ahler form on $X$.
A torsion-free sheaf $\mathcal{E}$ on $X$ is said to be \textit{pseudo-effective} if, for every $m \in \mathbb{Z}_{+}$,
there exists a singular Hermitian metric $h_{m}$ on the $m$-th symmetric power $\Sym^{m} \cal{E}|_{X_{0}}$
such that
$$
\sqrt{-1}\Theta_{h_{m}}\geq -\omega_{X} \otimes \id.
$$
\end{defn}

\begin{remark}\label{rem-hp}
Suppose that $X$ is a projective variety with an ample line bundle $A$.

(1) We compare our definition of pseudo-effective sheaves with Viehweg-type positivity in the projective setting.
By \cite[Proposition~2.4]{Mat23} (see also \cite[Theorem~2.21]{Pau18}),
the pseudo-effectivity of $\mathcal{E}$ is equivalent to the following condition:
for every $a\in \mathbb{Z}_{+}$, there exists $b\in \mathbb{Z}_{+}$ such that the reflexive hull
$\Sym^{[ab]}\mathcal{E}\otimes (bA)$ is globally generated at a general point of $X$.

(2) In \cite[Theorem 2.1]{HP19}, H\"oring--Peternell proved a flatness result for $\mathcal{E}$
when $\mathcal{E}$ is pseudo-effective in the sense of \cite[Definition 2.1]{HP19}
and $\Sym^{[m]}\mathcal{E}$ is stable for every $m$.
Here, pseudo-effectivity in the sense of \cite[Definition 2.1]{HP19} means that
for every $c \in \mathbb{Z}_{+}$ there exist integers $i,j \in \mathbb{Z}_{+}$ such that $i>cj$ and
$$
H^{0}\bigl(X, \Sym^{[i]}\mathcal{E} \otimes A^{\otimes j}\bigr) \neq 0.
$$
This definition is weaker than ours, even when $X$ is smooth and $\mathcal{E}$ is locally free.
Our Definition \ref{def-psef} implies that the non-nef locus of $\mathcal{O}_{\mathbb{P}(\mathcal{E})}(1)$ is not dominant over $X$.
\end{remark}

The following propositions will be frequently used in this paper.
\begin{proposition}\label{genersurjet}
Let $X$ be a normal analytic space and let
$$
\cE \to \cF
$$
be a generically surjective morphism between torsion-free sheaves $\cE$ and $\cF$.
If a Hermitian metric $h$ on $\mathcal{E}$ satisfies
$$
\sqrt{-1}\Theta_{h} \geq \theta \otimes \id \text{ on } X,
$$
then the induced metric $h_{q}$ on $\cF$ satisfies the same condition.
In particular, if $\cE$ is pseudo-effective, then so is $\cF$.
\end{proposition}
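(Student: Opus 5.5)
The plan is to work on the open set $X_1 \subset X_0$ where both $\mathcal{E}$ and $\mathcal{F}$ are locally free and the morphism $q\colon \mathcal{E}\to\mathcal{F}$ is a surjective map of vector bundles. Since $q$ is generically surjective between torsion-free sheaves, the complement $X_0\setminus X_1$ (inside the $\mathcal{F}$-locally-free part) is a proper analytic subset. On $X_1$ the quotient metric $h_q$ is defined by
$$
|v|_{h_q}(x) := \inf\{ |u|_{h}(x) : u \in \mathcal{E}_x,\ q(u)=v\},
$$
which is dual to the restriction of $h^*$ to the subbundle $q^*\colon \mathcal{F}^*\hookrightarrow \mathcal{E}^*$. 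Concretely, $|e|_{h_q^*}=|q^*e|_{h^*}$ for every local section $e$ of $\mathcal{F}^*$.

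The curvature condition then follows directly from the definition. Let $e\in H^0(U,\mathcal{F}^*)$. Then $q^*e:=e\circ q$ is a local section of $\mathcal{E}^*$ on $U$, and by hypothesis $\log|q^*e|_{h^*}-f$ is psh on $U\cap X_0$. This function coincides with $\log|e|_{h_q^*}-f$ on $U\cap X_1$. The set where $\mathcal{F}$ is locally free but $q$ fails to be surjective is a proper analytic subset. On it, $\log|q^*e|_{h^*}-f$ is still psh; it is the natural extension, and one may simply define $h_q^*$ there by this formula, since dual metrics of singular metrics are allowed to degenerate. So $\log|e|_{h_q^*}-f$ is psh on $U\cap X_1$ and is locally bounded above near $U\cap(X_{0}^{\mathcal{F}}\setminus X_1)$, because $q^*e$ is a holomorphic section there. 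By the Riemann extension theorem for psh functions across analytic subsets, together with Theorem~\ref{thm-gr} for the codimension $\ge 2$ part of $X\setminus X_0$, it extends as a psh function. This gives $\sqrt{-1}\Theta_{h_q}\ge\theta\otimes\id$.

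For pseudo-effectivity, observe that $\Sym^m\mathcal{E}\to\Sym^m\mathcal{F}$ is again generically surjective, and that $\Sym^m\mathcal{F}|_{X_0}$ and its torsion-free quotient agree generically. Applying the first part to each $h_m$ with $\theta=-\omega_X$ yields metrics on $\Sym^m\mathcal{F}$ with $\sqrt{-1}\Theta\ge-\omega_X\otimes\id$.

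The main subtlety is that the domain $X_0$ differs for $\mathcal{E}$ and $\mathcal{F}$, and $h_q$ is only a genuine metric off the degeneracy locus. This is handled by the upper-boundedness and psh extension across analytic subsets described above.
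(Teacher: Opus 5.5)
Your proposal is correct and takes essentially the same route as the paper, which simply invokes the argument of \cite[Lemma 2.4.3]{PT18}: identify $|e|_{h_q^*}=|q^*e|_{h^*}$ on the locus where $q$ is a surjection of bundles, then extend the resulting psh functions across the degeneracy locus and the codimension $\geq 2$ set $X\setminus X_0$ via Theorem~\ref{thm-gr}. A minor streamlining: if you define $h_q^*$ by the pullback formula on all of $X_0\cap X_{\cF}$, then $\log|e|_{h_q^*}-f$ is psh there directly, so the Riemann-type extension across the degeneracy locus is unnecessary and only the codimension $\geq 2$ part needs an extension argument.
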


\begin{proof}
A key point is that $\cE \to \cF$ is only generically surjective, and not necessarily surjective everywhere.
The proof follows from the argument of \cite[Lemma 2.4.3]{PT18}.
\end{proof}

\begin{proposition}\label{dualpsef}
Let $\mathcal{E}$ be a torsion-free sheaf on a normal compact K\"ahler space $X$.
Let $\Gamma^{a}(\mathcal{E})$ be the reflexive hull of $\Gamma^{a}(\mathcal{E}|_{X_{0}})$,
where $\Gamma^{a}(\mathcal{E}|_{X_{0}})$ is the locally free sheaf associated with a tensor representation of highest weight
$a=(a_{1}, \cdots, a_{r})$ with $a_{1}\geq  \cdots \geq a_{r} \geq 0$.
Assume that $\mathcal{E}$ is pseudo-effective.
Then $\Gamma^{a}(\mathcal{E})$ is pseudo-effective.
In particular, the reflexive exterior power $\Lambda^{[m]}\mathcal{E}$ is pseudo-effective.
Moreover, if $c_1(\cE)=0$, then $\cE^*$ is also pseudo-effective.
\end{proposition}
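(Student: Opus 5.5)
The plan is to work on the locus $X_0=X_{\reg}\cap X_{\mathcal{E}}$, where $\mathcal{E}$ is a vector bundle $E$ of rank $r$, and to build the required metrics there by linear algebra applied to the given metrics $h_m$ on $\Sym^m E$. Any metric on $X_0$ satisfying the curvature inequality automatically defines a singular Hermitian metric on the reflexive hull: $X\setminus X_0$ has codimension $\geq 2$ because $X$ is normal and $\mathcal{E}$ is torsion-free, so the relevant functions $\log|e|_{h^*}$ extend across it by Theorem~\ref{thm-gr}. The argument then has three steps: a generically surjective map onto $\Gamma^a$, the exterior-power case, and the dual under $c_1(\mathcal{E})=0$.

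\textbf{Step 1: the map onto $\Gamma^a$.} Write $\Gamma^a(E)$ as a direct summand, equivalently a quotient, of a tensor product
\[
\Sym^{b_1}E\otimes\cdots\otimes \Sym^{b_k}E ,
\]
where the $b_j$ are the column lengths of the transposed Young diagram. The Schur functor is the image of a Young symmetrizer, and the tensor product of symmetric powers surjects onto it. Tensor products of metrics with curvature $\geq -\omega_X\otimes \id$ have curvature $\geq -k\,\omega_X\otimes\id$, in the Griffiths sense used in the paper. This holds because the psh condition is on $\log|e|_{h^*}$ for sections of the dual, and for the tensor product one uses the characterization via $\log$ of norms of decomposable and general dual sections. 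I expect this to be where most care is needed: Griffiths semi-negativity of the dual is not obviously preserved under tensor products. To handle it I will use the standard fact (for example from \cite{PT18} or \cite{HPS18}) that the tensor product of two Griffiths semipositive singular metrics is Griffiths semipositive. Via regularization, this is the statement that $\log$ of the norm of $\sum e_i\otimes f_i$ is psh.

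\textbf{Step 2: pseudo-effectivity of $\Gamma^a(\mathcal{E})$.} The factor $k$ is harmless: the definition requires $-\omega_X$, but pseudo-effectivity is insensitive to scaling, since we may use the metrics on $\Sym^{mN}$ and take roots. More precisely, it suffices to produce for each $m$ a metric on $\Sym^m(\Gamma^a E)$ with curvature $\geq -\varepsilon\omega_X$. This comes from the natural generically surjective map
\[
\Sym^{mb_1}E\otimes\cdots\otimes\Sym^{mb_k}E\to \Sym^m\Gamma^a(E),
\]
given by plethysm. Here I take $h_{\ell}$ with curvature bounded below by $-\varepsilon\omega$, which is possible by taking high symmetric powers. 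Proposition~\ref{genersurjet} then transports the curvature bound to the quotient, and the reflexive hull gives the claim. Exterior powers are the case $a=(1,\dots,1,0,\dots,0)$.

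\textbf{Step 3: the dual.} Suppose $c_1(\mathcal{E})=0$. On $X_0$ we have $E^*\cong \Lambda^{r-1}E\otimes(\det E)^{-1}$. By Step 2, $\Lambda^{[r-1]}\mathcal{E}$ and $\det\mathcal{E}$ are pseudo-effective. Since $c_1(\det\mathcal{E})=0$, a pseudo-effective rank-one reflexive sheaf with zero first Chern class carries a metric with curvature $\geq -\varepsilon\omega$ for every $\varepsilon$. Taking a weak limit of the potentials then gives a closed positive current in the zero class, which must vanish. Hence $\det\mathcal{E}$ carries a flat, hence smooth, metric, so $(\det\mathcal{E})^{-1}$ is also pseudo-effective. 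Tensoring, in the same way as for $\Gamma^a$, then yields that $\mathcal{E}^*$, and its symmetric powers, are pseudo-effective.
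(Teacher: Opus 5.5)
\textbf{The gap is in Step 2.} The ``natural generically surjective map'' $\Sym^{mb_1}E\otimes\cdots\otimes\Sym^{mb_k}E\to\Sym^m\Gamma^a(E)$, where the $b_j$ are the nonzero $a_j$, does not exist in general.

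\begin{itemize}
\item Every irreducible summand $\Gamma^\lambda(E)$ of $\Sym^m\Gamma^a(E)$ has $\lambda\le ma$ in the dominance order.
\item By Young's rule, every irreducible summand of $\Sym^{ma_1}E\otimes\cdots\otimes\Sym^{ma_r}E$ has $\lambda\ge ma$.
\item Hence any natural map from the latter to the former lands in the Cartan component $\Gamma^{ma}(E)$, which is usually a proper summand.
\end{itemize}

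For example, take $\rk E=4$, $a=(1,1,0,0)$ and $m=2$. Then $\Sym^2(\Lambda^2E)\cong\Gamma^{(2,2)}(E)\oplus\Lambda^4E$, where the second summand is the Pl\"ucker map $\omega\cdot\omega'\mapsto\omega\wedge\omega'$. On the other hand, $\Sym^2E\otimes\Sym^2E\cong\Sym^4E\oplus\Gamma^{(3,1)}(E)\oplus\Gamma^{(2,2)}(E)$ contains no copy of $\Lambda^4E$. So Proposition~\ref{genersurjet} cannot be applied. Your argument therefore does not cover $\Lambda^{[p]}\mathcal{E}$ for $p\ge 2$ either.

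Separately, getting $-\varepsilon\omega_X$ by ``taking roots'' of metrics on $\Sym^{mN}E$ is not valid for vector bundles. There is no linear operation that turns a metric on $\Sym^{mN}E$ into one on $\Sym^mE$ with curvature divided by $N$. Fortunately, a bound uniform in $m$ is all that is needed.

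\textbf{The missing idea is to use the rank bound instead of plethysm.} Decompose $\Sym^m\Gamma^a(E)\cong\bigoplus_\lambda\Gamma^\lambda(E)^{\oplus c_\lambda}$ with $|\lambda|=m|a|$.

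\begin{itemize}
\item Since $\rk E=r$, only $\lambda$ with at most $r$ nonzero parts occur.
\item So each $\Gamma^\lambda(E)$ is a direct summand of $\Sym^{\lambda_1}E\otimes\cdots\otimes\Sym^{\lambda_r}E$.
\item That bundle carries $h_{\lambda_1}\otimes\cdots\otimes h_{\lambda_r}$ with curvature $\ge -r\,\omega_X\otimes\id$.
\item Quotient metrics on the summands, followed by the direct-sum metric, give a lower bound $-r\,\omega_X$ uniform in $m$.
\end{itemize}

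This is the paper's argument, with the factor $r$ absorbed into the K\"ahler form.

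\textbf{Step 3} has the same structure as the paper's argument. The two inputs you need there, $\Sym^m(\Lambda^{r-1}E)\cong\Gamma^{(m,\dots,m,0)}(E)$ and $\Sym^m(\det E)$, are irreducible, so your Cartan map does suffice for them.

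However, ``a closed positive current in the zero class must vanish'' is too quick. The sheaf $\det\mathcal{E}$ is only reflexive of rank one, its metrics live on $X_0$, and $c_1$ is defined on a resolution. You must first extend the metric across the exceptional divisors after twisting by $\Lambda$ (Theorem~\ref{thm-ext}). This is exactly Proposition~\ref{prop-flat}, $(3)\Rightarrow(2)$, which the paper invokes and which you should cite here.
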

\begin{proof}
The proof follows from the argument of \cite[Proposition 1.14]{DPS94},
and thus, we briefly explain only the main strategy.
Take singular Hermitian metrics $\{h_{m}\}$ on $\Sym^{m}\mathcal{E}|_{X_{0}}$
such that $\sqrt{-1}\Theta_{h_{m}}\geq -\omega_{X} \otimes \id$.
After replacing $X$ with $X_{0}$, we may assume that $\mathcal{E}$ is locally free.

Recall that $\Sym^{m}(\Gamma^{a}(\mathcal{E}))$ is a direct summand of $\Gamma^{b}(\mathcal{E})$ with $|b|=m|a|$, and that
$\Gamma^{b}(\mathcal{E})$ is a direct summand of $\Sym^{b_{1}}(\mathcal{E})\otimes \cdots \otimes \Sym^{b_{r}}(\mathcal{E})$.
On the other hand, the product metric $h_{b_{1}}\otimes \cdots \otimes h_{b_{r}}$ satisfies
$$
\sqrt{-1}\Theta_{h_{b_{1}}\otimes \cdots \otimes h_{b_{r}}}\geq -r\,\omega_{X} \otimes \id.
$$
Note that, although $h_{m}$ is singular, the tensor product metric can be defined by approximating $h_{m}$ locally by smooth metrics via convolution.
By Proposition \ref{genersurjet},
the induced quotient metric $g_{m}$ on $\Sym^{m}(\Gamma^{a}(\mathcal{E}))$ satisfies the same inequality.
This shows that $\Gamma^{a}(\mathcal{E})$ is pseudo-effective.

For the last assertion, assume that $c_1(\cE)=0$.
Then $\Gamma^a(\cE)\otimes (\det \cE)^*$ is pseudo-effective on $X$ by (see Proposition \ref{prop-flat}).
In particular, taking $a=(1,0,\ldots,0)$, we obtain that $\cE^*$ is pseudo-effective.
\end{proof}

\subsection{Stabilities of torsion-free sheaves}\label{subsec-stable}

The determinant sheaf $\det\mathcal{E}$ is reflexive, but it is not necessarily invertible when $X$ has singularities.
Nevertheless, we can define the first Chern class $c_{1}(\mathcal{E})$ as a functional on the classes of $d$-closed forms as follows: 
Take a modification $\pi \colon \tilde{X} \to X$ such that
$\tilde{X}$ is smooth and $(\pi^{*}\mathcal{E})/\tor$ is a locally free sheaf,
where $\tor$ is the maximal torsion subsheaf of $\pi^{*}\mathcal{E}$.
Then, for a $d$-closed $(n-1, n-1)$ form $\gamma$, 
the first Chern class $c_{1}(\mathcal{E})$ is defined by 
$$
\int_{X}  c_1 (\mathcal{E} ) \wedge  \gamma:=
\int_{\tilde{X} } c_1 \bigl((\pi^{*}\mathcal{E})/\tor\bigr) \wedge \pi^* \gamma 
$$
This definition is independent of the choice of $\pi \colon \tilde{X} \to X$.
As in the smooth case, the slope (and hence stability) is defined as follows:

\begin{defn}
Let $X$ be a normal compact K\"ahler space and let $\omega_X$ be a K\"ahler form on $X$.
Let $\mathcal{E}$ be a torsion-free sheaf on $X$, and let
$\pi \colon \tilde{X} \to X$ be a desingularisation of $X$ such that $(\pi^{*}\mathcal{E})/\tor$ is a locally free sheaf.
Then, the slope of $\mathcal{E}$ with respect to $\omega_X$ is defined by
\begin{align*}
\mu_{\omega_X} (\cE) :&=
\frac{1}{\rank \cE}\int_{X}  c_1 (\mathcal{E} ) \wedge  \omega_X ^{n -1}\\
&:=
\frac{1}{\rank \cE}\int_{\tilde{X} } c_1 \bigl((\pi^{*}\mathcal{E})/\tor\bigr) \wedge \pi^* \omega_X ^{n -1}.
\end{align*}
\end{defn}
The right-hand side is independent of the choice of desingularisation,
since 
$$\int_E \pi^* \omega_X ^{n -1} =0$$ 
for any $\pi$-exceptional divisor $E$
for dimensional reasons.

As a consequence of Propositions \ref{genersurjet} and \ref{dualpsef}, we have the following:
\begin{proposition}\label{semistable}
Let $X$ be a normal compact K\"ahler space and let $\mathcal{E}$ be a pseudo-effective sheaf on $X$.
Let $\omega_X$ be a K\"ahler form on $X$. If $c_1 (\cE)=0$, then $\cE$ is semistable with respect to $\omega_{X}$.
\end{proposition}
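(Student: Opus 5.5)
We argue by contradiction, using the two preceding propositions to turn any destabilizing subsheaf into a pseudo-effective quotient of negative slope. Suppose $\cE$ is not semistable with respect to $\omega_X$. Then there is a saturated subsheaf $0\neq\mathcal{F}\subsetneq\cE$ with $\mu_{\omega_X}(\mathcal{F})>\mu_{\omega_X}(\cE)=0$, where the equality holds because $c_1(\cE)=0$.

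The first step is to pass to the dual. Set $\mathcal{Q}:=\cE/\mathcal{F}$, which is torsion-free of rank $q:=r-\rank\mathcal{F}>0$. Additivity of $c_1$ on short exact sequences of torsion-free sheaves holds here: compute on a desingularisation where everything becomes locally free outside a $\pi$-exceptional set and codimension-$2$ loci, which do not contribute against $\pi^*\omega_X^{n-1}$. Hence
\[
c_1(\mathcal{Q})\cdot\omega_X^{n-1}=-c_1(\mathcal{F})\cdot\omega_X^{n-1}<0 .
\]
Since $c_1(\cE)=0$, Proposition~\ref{dualpsef} shows that $\cE^*$ is pseudo-effective. Dualizing the inclusion $\mathcal{F}\hookrightarrow\cE$ gives a morphism $\cE^*\to\mathcal{F}^*$, which is generically surjective because $\mathcal{F}$ is a subbundle on a dense Zariski open set. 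By Proposition~\ref{genersurjet}, $\mathcal{F}^*$ is therefore pseudo-effective, and it satisfies $\mu_{\omega_X}(\mathcal{F}^*)<0$.

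The second step reduces to a line bundle. Applying Proposition~\ref{dualpsef} to $\mathcal{F}^*$, the reflexive top exterior power $\mathcal{L}:=\det(\mathcal{F}^*)=\Lambda^{[s]}\mathcal{F}^*$, with $s=\rank\mathcal{F}$, is pseudo-effective. It is a rank-one reflexive sheaf, and $c_1(\mathcal{L})\cdot\omega_X^{n-1}=-c_1(\mathcal{F})\cdot\omega_X^{n-1}<0$. (Alternatively, one can work directly with the quotient $\Lambda^{[q]}\mathcal{Q}$ of $\Lambda^{[q]}\cE$, and apply Propositions~\ref{genersurjet} and~\ref{dualpsef}.)

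It remains to show that a pseudo-effective rank-one torsion-free sheaf $\mathcal{L}$ has $\int c_1(\mathcal{L})\wedge\omega_X^{n-1}\ge 0$. For every $m$ we have a metric $h_m$ on $\mathcal{L}^{\otimes m}|_{X_0}$ with curvature $\ge-\omega_X$. Pull back to a desingularisation $\pi\colon\tilde X\to X$ on which $L:=(\pi^*\mathcal{L})/\tor$ is a line bundle. The metric $h_m$ induces a singular metric on $L^{m}$ whose curvature current is $\ge-\pi^*\omega_X$ away from the exceptional locus. Because the local weights extend as quasi-psh functions (Theorem~\ref{thm-gr} on $X$, then pullback), the curvature current equals $\Theta+[D_m]$ with $\Theta\ge-\pi^*\omega_X$. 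Here $D_m$ is an effective divisor supported on the $\pi$-exceptional locus, and any mismatch there is $\pi$-exceptional, so it pairs to zero with $\pi^*\omega_X^{n-1}$. Pairing with the semipositive closed form $\pi^*\omega_X^{n-1}$ gives
\[
m\,c_1(\mathcal{L})\cdot\omega_X^{n-1}\ge-\int_X\omega_X^n .
\]
Dividing by $m$ and letting $m\to\infty$ yields $c_1(\mathcal{L})\cdot\omega_X^{n-1}\ge0$, a contradiction.

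The main obstacle is this last step, namely justifying that the singular metric on the reflexive sheaf computes $c_1$ correctly on $\tilde X$, with exceptional contributions invisible against $\pi^*\omega_X^{n-1}$. I would handle it by working on $\tilde X$ minus the exceptional set and on the big open set where $\mathcal{L}$ is invertible, and by using the standard fact that a closed positive current's mass against $\pi^*\omega_X^{n-1}$ computes the cohomological pairing.
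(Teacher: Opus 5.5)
Your argument is correct, but your main route differs from the paper's. The paper never dualizes. For a saturated $\mathcal{S}\subset\cE$, the quotient $\mathcal{Q}=\cE/\mathcal{S}$ is pseudo-effective directly by Proposition~\ref{genersurjet}, since $\cE\to\mathcal{Q}$ is surjective. Hence $\det\mathcal{Q}$ is pseudo-effective by Proposition~\ref{dualpsef}, so $\mu_{\omega_X}(\mathcal{Q})\ge 0$, and additivity of $c_1$ modulo exceptional divisors gives $\mu_{\omega_X}(\mathcal{S})\le 0$. This is exactly the ``alternative'' you mention in parentheses.

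Your main route instead uses the last assertion of Proposition~\ref{dualpsef}, that $\cE^*$ is pseudo-effective when $c_1(\cE)=0$. In the paper this rests on Proposition~\ref{prop-flat} (a flat metric on $\det\cE|_{X_0}$), and hence on the extension results Theorems~\ref{thm-extension} and~\ref{thm-ext}. There is no circularity, but this is much heavier machinery than needed. In exchange, you only need $c_1(\det\mathcal{F}^*)\cdot\omega_X^{n-1}=-c_1(\mathcal{F})\cdot\omega_X^{n-1}$, not additivity along the exact sequence. The quotient route also gives more: every torsion-free quotient of a pseudo-effective sheaf has $\mu\ge 0$. So semistability needs only $\int_X c_1(\cE)\wedge\omega_X^{n-1}=0$, not the flat metric on $\det\cE$.

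Your last step, that a pseudo-effective rank-one sheaf has nonnegative degree, is left implicit in the paper. It is fine in substance, but two details are off. First, the weight of the induced metric on $L^m$ does not in general extend quasi-psh across the exceptional locus; only the functions $\pi^*\log|e|_{h_m^*}$, for local sections $e$ of the dual, do. Second, the correction divisor generally enters with a minus sign: for $\mathcal{L}=I_Z$ as in Remark~\ref{rem-psef}, the curvature current is $-[E]$. The correct statement is $T=R-\pi^*\omega_X+\sum_j c_j[A_j]$, with $R\ge 0$ closed, $c_j\in\R$ of either sign, and $A_j$ exceptional; this follows from a Siu decomposition along the exceptional divisors. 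It suffices, since $\int_{A_j}\pi^*\omega_X^{n-1}=0$.
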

\begin{proof}
Let $\mathcal{S} \subset \mathcal{E}$ be a saturated subsheaf, and set $\mathcal{Q}:=\mathcal{E}/\mathcal{S}$.
Then $\mathcal{Q}$ is pseudo-effective by Proposition \ref{genersurjet},
and so is $\det \mathcal{Q}$ by Proposition \ref{dualpsef}.
This shows that the slope of $\mathcal{S}$ is semipositive.
\end{proof}

\subsection{Maximal quasi-\'etale covers in the K\"ahler setting}\label{subsec-max}

Following \cite[Section 3.2]{Gue25} and \cite[Subsection 2.B]{FO25}, we summarize some results on maximally quasi-\'etale covers in the analytic setting.
Note that \cite[Corollary 3.6]{Gue25} and \cite[Theorem 2.5]{FO25} are formulated for klt complex analytic varieties, but the same argument applies to potentially klt singularities.

\begin{defn}
Let $X$ be a normal analytic variety. A maximally quasi-\'etale cover
of $X$ is a quasi-\'etale Galois cover $X' \to X$ such that the naturally induced morphism
\[
\iota_{*}\colon {\hat{\pi}_{1}}(X'_{\mathrm{\reg}})\longrightarrow {\hat{\pi}_{1}}(X')
\]
of \'etale fundamental groups is an isomorphism.
\end{defn}

\begin{thm}[{\cite[Theorem 1.5]{GKP16}, \cite[Cor 3.6]{Gue25}, \cite[Theorem 2.5]{FO25}}]
\label{thm-maxqetale}
Let $X$ be a potentially klt compact analytic variety.
Then, we have$:$

$(1)$ There exists a maximally quasi-\'etale cover $\nu \colon X'\to X$ of $X$.

$(2)$ Suppose that a normal analytic variety $X$ is maximally quasi-\'etale $($i.e.\,$X$ itself satisfies the above property$)$.
Then, any linear representation $\rho_0 \colon \pi_{1}(X_{\reg}) \rightarrow \GL(r, \mathbb{C})$ factors through $\pi_{1}(X)$:
\begin{equation*}
\xymatrix@C=40pt@R=30pt{
\pi_{1}(X_{\reg}) \ar@/^18pt/[rr]^{\rho_0} \ar@{->>}[r]_{i_{*}} & \pi_1(X) \ar@{->}[r]_{\rho\quad} & \GL(r, \mathbb{C}).\\ 
 }
 \end{equation*}
\end{thm}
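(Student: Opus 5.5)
The approach is to reduce both assertions to the known algebraic/klt statements via local algebraization arguments and the structure of fundamental groups of klt spaces. The cited sources carry this out; we only indicate how the potentially klt case follows from the klt case.

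For (1), we first recall the projective/quasi-projective argument of \cite[Theorem 1.5]{GKP16}. Suppose there were an infinite tower of quasi-\'etale Galois covers
\[
\cdots \to X_{2}\to X_{1}\to X,
\]
each one non-étale over the smooth locus in the sense that $\hat\pi_1((X_i)_{\reg})\to\hat\pi_1(X_i)$ fails to be an isomorphism. The key input is that a klt space has, locally near every point, finite local algebraic fundamental group of the smooth locus. In the analytic setting this is Braun's theorem \cite{Bra21}, which applies to germs of klt singularities. Boundedness of these local groups, together with compactness of $X$ (only finitely many strata to control), shows that such a tower must stabilize. Its terminal member is then maximally quasi-\'etale. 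In the K\"ahler/analytic setting this is \cite[Cor 3.6]{Gue25} and \cite[Theorem 2.5]{FO25}.

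To pass from klt to potentially klt, we note that the only property used is locality. Namely, every point of $X$ has a neighbourhood that is klt for some boundary, and Braun's local finiteness holds for germs of klt pairs. Moreover, finite quasi-\'etale covers of potentially klt spaces are again potentially klt, since one can pull back $\Delta$ and use that the ramification is in codimension $\geq 2$. The tower argument therefore runs verbatim.

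For (2), let $\rho_0$ be a representation of $\pi_1(X_{\reg})$. Its kernel is normal, and we must show it contains $\ker(i_*)$. First, $\pi_1(X_{\reg})\to\pi_1(X)$ is surjective by normality. The kernel of this map is normally generated by local loops around the singular locus, that is, by images of the local fundamental groups $\pi_1(U_{\reg})$ for small neighbourhoods $U$. These local groups are finite by \cite{Bra21}, and the image of each under $\rho_0$ is therefore a finite subgroup of $\GL(r,\mathbb C)$.

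Since $\rho_0(\pi_1(X_{\reg}))$ is finitely generated and linear, Malcev's theorem makes it residually finite. Hence, if some local element $\gamma$ had $\rho_0(\gamma)\neq 1$, there would be a finite quotient of $\pi_1(X_{\reg})$, factoring through $\rho_0$, in which $\gamma$ survives. That quotient gives a finite quasi-\'etale cover of $X$ which is ramified over the singularities. Such a cover contradicts the isomorphism $\hat\pi_1(X_{\reg})\cong\hat\pi_1(X)$. Consequently $\rho_0$ kills all local loops and factors through $\pi_1(X)$.

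The main obstacle is the analytic local finiteness of $\pi_1$ of klt germs together with the stabilization of the tower. For both we rely on the cited references.
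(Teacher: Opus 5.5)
Your proposal agrees with the paper on the two essential choices. For (1), both defer to \cite{Gue25, FO25}, with finiteness of the local fundamental groups of klt germs as the key input. For (2), both play Malcev's residual finiteness against the isomorphism $\hat\pi_1(X_{\reg})\cong\hat\pi_1(X)$. The differences are in how you fill in details.

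In (2) you take a detour that the paper avoids. You first reduce to local loops, using that $\ker(i_*)$ is normally generated by the images of the groups $\pi_1(U_{\reg})$. This is true, but you neither prove nor cite it, and the finiteness of the local groups that you quote from \cite{Bra21} is never used afterwards. The paper argues directly with an arbitrary $\gamma\in\ker(i_*)$. Malcev gives a finite quotient $\psi\circ\rho_0$ of $G=\pi_1(X_{\reg})$ in which $\gamma$ survives, so $p_G(\gamma)\neq 1$. On the other hand $\hat{i}_*(p_G(\gamma))=p_H(i_*(\gamma))=1$, and $\hat{i}_*$ is injective. Your own contradiction (the finite quotient must factor through $\pi_1(X)$) applies verbatim to every element of $\ker(i_*)$, so the local-loop reduction can be dropped. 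Part (2) then needs only that $\pi_1(X_{\reg})$ is finitely generated (as $X$ is compact), not the klt hypothesis. What your version adds is a geometric reading: a local loop with nontrivial monodromy would produce a quasi-\'etale cover genuinely ramified over a singular point.

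Your sketch of (1) contains inaccuracies, which are harmless only because you fall back on the references.

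Braun's theorem is proved for algebraic klt singularities, and there is no general algebraization for non-isolated analytic klt germs (Artin's theorem covers isolated singularities). So neither ``local algebraization'' nor a direct appeal to \cite{Bra21} gives the analytic local finiteness. One must rerun the Braun--Xu argument with plt blowups, whose existence in the analytic category comes from \cite{DHP24, Fuj22}; this is precisely the point the paper stresses.

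``Non-\'etale over the smooth locus'' should read ``not \'etale'', since by purity every quasi-\'etale cover is \'etale over $X_{\reg}$.

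Finally, the stabilization you assert is valid for towers of non-\'etale steps whose composites $X_i\to X$ are Galois. Then only finitely many conjugacy classes of finite local images in $\hat\pi_1(X_{\reg})$ have to be tracked. Each non-\'etale step strictly shrinks the intersection of one of them with the corresponding open normal subgroup, so the tower stops. With merely stepwise Galois covers the claim is false. Multiplication by $2$ on an abelian surface $A$ induces a non-\'etale, quasi-\'etale Galois self-cover of $A/\{\pm 1\}$ of degree $16$, and iterating it never stops.
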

\begin{proof}
As explained in \cite[Corollary 3.6]{Gue25} and \cite[Subsection 2.B]{FO25}, the first conclusion follows
from the finiteness of the local fundamental group of $X_{\reg}$
thanks to the arguments in \cite{Bra21, Xu14}, where the existence of a plt blowup plays a crucial role via \cite{BCHM10}.
This has recently been established using developments of the Minimal Model Program for projective morphisms in the analytic setting \cite{DHP24, Fuj22}.

The second conclusion follows from Malcev's theorem, as in \cite{GKP16}. We briefly explain the proof for the reader's convenience.
Note that $\pi_{1}(X_{\reg})$ is finitely generated.
Except for this point, the statement is purely group-theoretic, so we set
\[
G:=\pi_{1}(X_{\reg}),\quad H:=\pi_{1}(X),
\quad i_{*}\colon G\to H. 
\]
Let $\widehat{G}$ (resp.\ $\widehat{H}$) be the profinite completion of $G$ (resp.\ $H$).
Consider the commutative diagram
\[
\xymatrix{
G \ar[r]^{i_*} \ar[d]_{p_G} & H \ar[d]^{p_H} \\
\widehat{G} \ar[r]^{\widehat{i}_*} & \widehat{H}. 
}
\]
Since $X$ is maximally quasi-\'etale, the bottom arrow $\widehat{i}_*$ is an isomorphism.

It is enough to show that
$
\ker(i_{*})\subset \ker(\rho_{0}).
$
Let $\gamma \in \ker(i_{*})$ and assume for contradiction that $\rho_{0}(\gamma)\neq 1$.
Set $\Gamma:=\rho_{0}(G)\subset \GL(r,\C)$.
Since $\Gamma$ is a finitely generated linear group, Malcev's theorem shows that $\Gamma$ is residually finite.
Therefore, there exists a homomorphism $\psi\colon \Gamma\to F$
to some finite group $F$ such that $\psi(\rho_{0}(\gamma))\neq 1$.
Then $\Ker (\psi\circ \rho_{0}) \triangleleft G$ is a normal subgroup of $G$
of finite index and does not contain $\gamma$.
This means that $p_{G}(\gamma) \neq 1$,
which contradicts the commutative diagram.
\end{proof}

\subsection{Numerically flat locally free sheaves}\label{numflat}

We first recall three related notions of flatness.

\begin{defn}
Let $X$ be a normal analytic variety and let $\mathcal{E}$ be a locally free sheaf on $X$.
\begin{itemize}
\item[(1)] The sheaf $\mathcal{E}$ is said to be \textit{flat} if it is constructed by a linear representation
$\pi_{1}(X) \to \GL(r, \mathbb{C})$.
If $X$ is smooth, this is equivalent to saying that $\mathcal{E}$ admits a holomorphic connection with vanishing curvature.

\item[(2)] The sheaf $\mathcal{E}$ is said to be \textit{Hermitian flat} if it admits a smooth Hermitian metric with vanishing Chern curvature.

\item[(3)] Suppose that $X$ is compact.
The sheaf $\mathcal{E}$ is said to be \textit{numerically flat} if both $\mathcal{E}$ and $\mathcal{E}^*$ are nef.
Equivalently, $\mathcal{E}$ is nef and $c_1(\mathcal{E})=0$.
\end{itemize}
\end{defn}

Note that these notions are defined only for locally free sheaves.
We assume that $X$ is compact when discussing numerical flatness, while (1) and (2) are also defined for non-compact varieties.
Recall that $\mathcal{E}$ is \textit{numerically effective} (nef for short) 
if $\mathcal{O}_{\mathbb P (\mathcal{E})}(1)$ is a nef line bundle on $\mathbb P (\mathcal{E})$.
Demailly--Peternell--Schneider in \cite{DPS94} proved
that $\mathcal{E}$ is nef if and only if, for every $m \in \mathbb{Z}_{+}$, the $m$-th symmetric power $\Sym^{m}\mathcal{E}$
admits a smooth Hermitian metric $h_{m}$ such that
$$
\sqrt{-1}\Theta_{h_{m}}\geq -\omega_{X} \otimes \id.
$$
Moreover, they established a flatness criterion for numerically flat locally free sheaves:

\begin{thm}[{\cite[Theorem 1.18]{DPS94}}]\label{DPS18}
Let $X$ be a compact K\"ahler manifold and let $\mathcal{E}$ be a locally free sheaf on $X$.
If $\mathcal{E}$ is numerically flat, then $\mathcal{E}$ admits a filtration of holomorphic subbundles
\begin{equation}\label{filtra}
\{0\}= E_0 \subset E_1 \subset \cdots \subset E_k =\mathcal{E}
\end{equation}
such that each graded quotient $E_i /E_{i-1}$ is Hermitian flat.
\end{thm}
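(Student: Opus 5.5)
We plan to prove Theorem~\ref{DPS18} by induction on the rank $r$ of $\mathcal{E}$, using Kähler slope stability with respect to a fixed Kähler form $\omega_X$. Throughout, we use the Demailly--Peternell--Schneider characterization recalled above: $\mathcal{E}$ is nef if and only if, for every $m$, $\Sym^m\mathcal{E}$ admits smooth metrics $h_m$ with $\sqrt{-1}\Theta_{h_m}\geq -\omega_X\otimes\id$. In particular, a numerically flat bundle is pseudo-effective with $c_1(\mathcal{E})=0$.

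\textbf{Step 1: semistability and the first filtration step.} By Proposition~\ref{semistable}, $\mathcal{E}$ is $\omega_X$-semistable. We choose a saturated subsheaf $\mathcal{F}\subset \mathcal{E}$ of slope $0$ and minimal rank. Such an $\mathcal{F}$ is stable and reflexive, and its determinant satisfies $c_1(\mathcal{F})\cdot\omega_X^{n-1}=0$.

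\textbf{Step 2: $\mathcal{F}$ is a Hermitian flat subbundle.} The key observation is that the quotient $\mathcal{Q}=\mathcal{E}/\mathcal{F}$ is generically a quotient of a nef bundle. Hence $\det\mathcal{Q}$ is pseudo-effective, by Propositions~\ref{genersurjet} and \ref{dualpsef}. Dually, $\mathcal{F}^*$ is a generic quotient of $\mathcal{E}^*$, which is nef, so $\det\mathcal{F}^*$ is pseudo-effective as well.

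Since $c_1(\mathcal{F})+c_1(\mathcal{Q})=0$ and both $-c_1(\mathcal{F})$ and $-c_1(\mathcal{Q})$ are pseudo-effective, the class $c_1(\mathcal{F})$ is simultaneously pseudo-effective and antipseudo-effective. It is therefore zero, because a pseudo-effective class $\alpha$ with $-\alpha$ also pseudo-effective vanishes on a Kähler manifold.

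Now $\mathcal{F}^*$ is pseudo-effective with $c_1=0$ and is stable. We would like to show that $\mathcal{F}$ is locally free and Hermitian flat, and the cleanest route is through Hermitian--Einstein theory. By Bando--Siu, the reflexive stable sheaf $\mathcal{F}$ carries an admissible Hermitian--Einstein metric with Einstein constant $0$. We would then run the argument behind Theorem~\ref{thm-stable} (here on a manifold, where it is simpler), or alternatively the classical route: the composite $\mathcal{F}\to\mathcal{E}$ is a nonzero map from a stable sheaf into a nef bundle of slope $0$. Applying this to $\Lambda^p\mathcal{F}\to \Lambda^{[p]}\mathcal{E}$ with $p=\rank\mathcal{F}$ gives a section of $\Lambda^p\mathcal{E}\otimes\det\mathcal{F}^{-1}$. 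Since $\det\mathcal{F}$ is numerically trivial, this is a section of a nef bundle twisted by a numerically trivial line bundle. By \cite[Prop.~1.16]{DPS94}, such a section vanishes nowhere. Hence $\det\mathcal{F}\to\Lambda^p\mathcal{E}$ is a subbundle, and so $\mathcal{F}$ is a subbundle of $\mathcal{E}$.

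Flatness of $\mathcal{F}$ then follows from Theorem~\ref{thm-stable}, applied on the manifold $X$, since $\mathcal{F}$ is pseudo-effective. Indeed, $\mathcal{F}$ is stable with $c_1=0$, and it is nef, because its dual is a quotient of $\mathcal{E}^*$, which is nef, so that $\mathcal{F}^*$ is nef and $\mathcal{F}$ is numerically flat. The pseudo-effectivity of $\mathcal{F}$ itself is the delicate point. If it is unavailable, we apply Theorem~\ref{thm-stable} to $\mathcal{F}^*$ instead: this is legitimate because $\mathcal{F}^*$ is nef, stable, and has $c_1=0$, and Hermitian flatness of $\mathcal{F}^*$ gives Hermitian flatness of $\mathcal{F}$.

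\textbf{Step 3: induction.} The quotient $\mathcal{E}/\mathcal{F}$ is now locally free. It is nef, being a quotient of a nef bundle, and has $c_1=0$, so it is numerically flat of smaller rank. By the induction hypothesis it admits a filtration of the required type, and pulling this filtration back to $\mathcal{E}$ yields the filtration of $\mathcal{E}$.

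\textbf{Main obstacle.} We expect the crux to be Step 2: showing that the minimal destabilizing subsheaf is a genuine subbundle, with no torsion in the quotient, and that it is Hermitian flat. The nonvanishing of sections of numerically flat bundles handles the first part. The second part requires a flatness input for stable nef bundles with $c_1=0$, which we take from Theorem~\ref{thm-stable}, or equivalently from the Hermitian--Einstein metric combined with the nefness of both $\mathcal{F}$ and $\mathcal{F}^*$.
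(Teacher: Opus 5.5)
Your strategy is the one the paper uses. Theorem~\ref{DPS18} is only cited, but the paper's proof of its non-K\"ahler extension, Theorem~\ref{DPSnonkahler}, runs exactly your scheme:
\begin{itemize}
\item semistability;
\item a reflexive stable slope-zero subsheaf;
\item the argument of \cite[Theorem 1.18, Step 1 and Lemma 1.20]{DPS94} to make it a numerically flat subbundle;
\item Hermitian flatness from the Hermitian--Einstein/direct-image argument of Theorem~\ref{thm-stable} and Lemma~\ref{nonkahlerherm};
\item induction on the rank.
\end{itemize}

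There is, however, a sign error in your proof that $c_1(\mathcal{F})=0$. Pseudo-effectivity of $\det\mathcal{Q}$ says that $c_1(\mathcal{Q})=-c_1(\mathcal{F})$ is pseudo-effective, not $-c_1(\mathcal{Q})$. So your two observations (about $\det\mathcal{Q}$ and about $\det\mathcal{F}^*$) give the same half, namely that $-c_1(\mathcal{F})$ is pseudo-effective. The other half would need $\det\mathcal{F}$ to be pseudo-effective, which is precisely what you do not yet know. The repair follows directly from Step 1. If $T\geq 0$ is a closed positive current representing $-c_1(\mathcal{F})$, then $\int_X T\wedge\omega_X^{n-1}=-\int_X c_1(\mathcal{F})\wedge\omega_X^{n-1}=0$. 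Hence $T=0$ and $c_1(\mathcal{F})=0$.

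There are two smaller points. First, \cite[Proposition 1.16]{DPS94} concerns sections of the \emph{dual} of a nef bundle; a section of a nef bundle may well vanish. The correct justification is that $\Lambda^p\mathcal{E}\otimes\det\mathcal{F}^{-1}$ is dual to $\Lambda^p\mathcal{E}^*\otimes\det\mathcal{F}$. This bundle is nef because $\mathcal{E}^*$ is nef and $\det\mathcal{F}$, having $c_1=0$ on a compact K\"ahler manifold, carries a flat metric. Together with the reflexivity of $\mathcal{F}$, \cite[Lemma 1.20]{DPS94} then gives the subbundle property. Second, once $\mathcal{F}$ is a subbundle, its pseudo-effectivity is not delicate. $\mathcal{F}^*$ is a quotient bundle of the nef bundle $\mathcal{E}^*$ with $c_1=0$, hence numerically flat. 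So $\mathcal{F}$ is nef and Theorem~\ref{thm-stable} applies to it directly; the detour via $\mathcal{F}^*$ or Bando--Siu is unnecessary.
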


Combining Theorem \ref{DPS18} with Simpson's result \cite{Sim92}
(see also \cite{Den21} for an explicit construction),
we obtain the following: 

\begin{cor}\label{flatcon}
Let $X$ be a compact K\"ahler manifold and let $\mathcal{E}$ be a locally free sheaf on $X$.
If $\mathcal{E}$ is numerically flat, then $\mathcal{E}$ admits a holomorphic flat connection
which is compatible with the filtration \eqref{filtra}.
\end{cor}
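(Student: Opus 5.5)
We will derive Corollary \ref{flatcon} from Theorem \ref{DPS18} by induction on the length $k$ of the filtration \eqref{filtra}. At each step we need to lift a flat connection across one more Hermitian flat graded piece. The tool for this is Simpson's theory of Higgs bundles on compact K\"ahler manifolds \cite{Sim92}. In that theory, the category of flat bundles is equivalent to the category of semistable Higgs bundles with vanishing Chern classes. Under this equivalence, Hermitian flat bundles correspond to polystable ones with zero Higgs field, and extensions of such objects correspond to extensions in both categories.

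First, observe that each graded piece $E_i/E_{i-1}$ in \eqref{filtra} is Hermitian flat. It therefore carries a unitary flat connection, and its Chern classes vanish. By the Whitney formula, all Chern classes of every $E_i$, and of $\mathcal{E}$ itself, vanish as well.

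Next, we view $(\mathcal{E},\theta=0)$ as a Higgs bundle. Each $E_i$ is $\theta$-invariant, and the graded pieces are polystable of slope zero. Hence $(\mathcal{E},0)$ is an iterated extension of polystable Higgs bundles with vanishing Chern classes, so it is semistable with $c_1=0$ and $c_2\cdot[\omega]^{n-2}=0$. Simpson's correspondence then gives a flat connection $D$ on the $C^\infty$ bundle underlying $\mathcal{E}$. We expect its $(0,1)$-part to equal $\bar\partial_{\mathcal{E}}$, because the Higgs field is zero; this would make $D$ holomorphic. We also expect $D$ to preserve each $E_i$, since the correspondence is functorial with respect to morphisms and extensions: the subobjects $E_i\subset \mathcal{E}$ in the Higgs category correspond to flat subbundles.

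A more hands-on alternative, following \cite{Den21}, is to argue inductively on the extensions
$$0\to E_{i-1}\to E_i\to E_i/E_{i-1}\to 0 .$$
Assume $E_{i-1}$ already carries a flat connection compatible with its filtration, and $Q=E_i/E_{i-1}$ carries a unitary flat connection. The extension class lies in $H^1(X,\mathcal{H}om(Q,E_{i-1}))$. Using the Hodge theory of the flat (harmonic) bundle $\mathcal{H}om(Q,E_{i-1})$, we would represent this class by a $\bar\partial$-closed form $\beta$ that is also $D$-closed, choosing it harmonic. We then define the connection on $E_i\simeq E_{i-1}\oplus Q$ (as $C^\infty$ bundles) by
$$\begin{pmatrix} D_{E_{i-1}} & \beta \\ 0 & D_Q\end{pmatrix}.$$
Flatness of this connection reduces to $D\beta=0$ and $\beta\wedge\beta$-type terms, which vanish by the triangular structure.

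The main obstacle is ensuring that the extension class admits a representative that is simultaneously $\bar\partial$-harmonic and $D$-closed, with $D$ an honest holomorphic connection. This step requires the $\partial\bar\partial$-lemma, or the formality of harmonic bundles, on the compact K\"ahler manifold $X$. It is precisely where the K\"ahler hypothesis enters, and it is the step we would check most carefully, or else simply quote from \cite{Sim92} and \cite{Den21}.
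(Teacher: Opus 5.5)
Your overall route is the paper's. There the corollary is obtained in one line by combining Theorem~\ref{DPS18} with \cite{Sim92} (see \cite{Den21} for the explicit construction). If you simply quote these results, as you offer to at the end, your argument coincides with the paper's.

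The self-contained second argument, however, has a step that fails as written, and it is exactly the one you flag. With the connection induced by $D_{E_{i-1}}$ and $D_Q$, the bundle $\Hom(Q,E_{i-1})$ is \emph{not} a harmonic bundle once $E_{i-1}$ is a non-split extension. It is only an iterated extension of the unitary flat bundles $\Hom(Q,F_a)$, where $F_a:=E_a/E_{a-1}$. So it is not semisimple and carries no harmonic metric. The K\"ahler identities giving $\Delta_{\dbar}=\Delta_{D'}$, which are what make harmonic $(0,1)$-forms $D'$-closed, are therefore unavailable. Hence ``choosing $\beta$ harmonic'' does not produce a $D$-closed representative. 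Asking for a representative that is simultaneously $\dbar$-harmonic and $D$-closed is also more than you need. For a $(0,1)$-form, $\dbar$-closedness is automatic from integrability, and flatness of your block connection only requires $D'\beta=0$. Similarly, in your first argument, the properties $D^{0,1}=\dbar_{\mathcal{E}}$ and $D(E_i)\subset E_i$ on the \emph{same} $C^\infty$ bundle do not follow from the abstract equivalence of categories together with $\theta=0$. They are precisely what the explicit extension construction provides.

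The fix is to apply the $\ddbar$-lemma on the unitary flat graded pieces one at a time, as in Step~4 of the proof of Theorem~\ref{thm-main}~(ii). First strengthen the induction hypothesis to a normal form. Take a $C^\infty$ splitting $E_{i-1}\simeq\bigoplus_{a<i}F_a$ with flat metrics $h_a$, and require $D_{E_{i-1}}=\bigoplus_a D'_{h_a}+\dbar_{E_{i-1}}$, where $\dbar_{E_{i-1}}=\bigoplus_a\dbar+(\eta_{ab})_{a<b}$ and $D'\eta_{ab}=0$. A mere flat connection compatible with the filtration is not enough, since the connections it induces on the graded pieces need not be unitary.

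Write $\beta=(\beta^{(1)},\dots,\beta^{(i-1)})$, where $\beta^{(a)}$ is a $(0,1)$-form with values in $\Hom(Q,F_a)$. Integrability reads $\dbar\beta^{(a)}=-\sum_{b>a}\eta_{ab}\wedge\beta^{(b)}$, and flatness reads $D'\beta^{(a)}=0$ for all $a$. Proceed by descending induction on $a$, assuming $D'\beta^{(b)}=0$ for all $b>a$ (vacuous for $a=i-1$). Then
$\dbar(D'\beta^{(a)})=-D'\dbar\beta^{(a)}=\sum_{b>a}D'(\eta_{ab}\wedge\beta^{(b)})=0$.
So $D'\beta^{(a)}$ is $D'$-exact and $\dbar$-closed in the unitary flat bundle $\Hom(Q,F_a)$, and the $\ddbar$-lemma gives $D'\beta^{(a)}=D'\dbar g_a$. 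Now replace $\beta$ by $\beta$ minus $\dbar_{\Hom(Q,E_{i-1})}$ of $g_a$, placed in the $a$-th slot. This does not change the extension class. It kills $D'\beta^{(a)}$, leaves $\beta^{(b)}$ untouched for $b>a$, and alters only $\beta^{(c)}$ for $c<a$. At $a=1$ you have the required $\beta$. Your block connection is then a holomorphic flat connection in the same normal form, preserving the filtration, which closes the induction.
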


Therefore, Theorem \ref{thm-main} can be viewed as a generalization of Theorem \ref{DPS18} and Corollary \ref{flatcon} to the singular setting:
the sheaf $\cE$ need not be locally free, the metrics $h_m$ may be singular, and the base space $X$ itself may also be singular.

\section{Technical results}\label{subse-technical}

In this section, we prepare several auxiliary propositions.
The first one, although technical, generalizes \cite[Proposition 1.16]{DPS94} to compact K\"ahler spaces.

\begin{proposition}\label{prop-ref}
Let $X$ be a compact K\"ahler space with potentially klt singularities.
Let $\mathcal{E}$ be a pseudo-effective reflexive sheaf on $X$ with vanishing first Chern class.
Consider an exact sequence
$$
0 \to \mathcal{S} \to \mathcal{E} \to \mathcal{Q} \to 0 \quad \text{ on } X,
$$
where $\mathcal{S}$ is reflexive and $\mathcal{Q}$ is torsion-free with vanishing first Chern class.
Then, there exists a Zariski closed subset $Z\subset X$ of codimension $\geq 3$ such that
$\mathcal{Q}$ is reflexive on $X\setminus Z$.
\end{proposition}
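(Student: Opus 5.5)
The plan follows the strategy of \cite[Proposition 1.16]{DPS94}, adapted to singular metrics and a singular base. Let $W \subset X$ be the support of the torsion sheaf $\mathcal{Q}^{**}/\mathcal{Q}$, which has codimension $\geq 2$. We must show that no irreducible component of $W$ has codimension exactly $2$. Fix such a component $W_0$ and a general point $x \in W_0$; the goal is to derive a contradiction there.

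\emph{Step 1: local normalisation at $x$.} Since $X$ is potentially klt, it has quotient singularities in codimension $2$. So after shrinking to a neighbourhood $U$ of a general point of $W_0$, there is a finite local quasi-\'etale (orbifold) chart $\tilde U \to U$ with $\tilde U$ smooth. Because $x$ is general in $W_0$, I expect that on $\tilde U$ the reflexive pullbacks of $\mathcal{E}$, $\mathcal{S}$ and $\mathcal{Q}^{**}$ are locally free near the preimage of $x$, possibly after removing a subset of codimension $\geq 3$. Here one uses that reflexive sheaves are locally free off codimension $3$ on a smooth space after cutting by general transverse surfaces. The failure of reflexivity of $\mathcal{Q}$ then becomes the failure of the bundle map $\mathcal{E} \to \mathcal{Q}^{**}$ to be surjective along a codimension-$2$ locus. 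Equivalently, the nonzero map
$$
\Lambda^{[q]}\mathcal{E} \to \det \mathcal{Q}^{**}, \qquad q=\rank \mathcal{Q},
$$
vanishes along $W_0$.

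\emph{Step 2: a section of a pseudo-effective sheaf with trivial $c_1$.} This map gives a nonzero global section $s$ of $\mathcal{F}:=\Lambda^{[q]}\mathcal{E}^{*}[\otimes]\det\mathcal{Q}^{**}$. By Proposition~\ref{dualpsef}, the sheaf $\Lambda^{[q]}\mathcal{E}$ is pseudo-effective, and $c_1(\mathcal{F})=0$ because $c_1(\mathcal{Q})=0$. Hence $\mathcal{F}^{*}$ is pseudo-effective, and by Proposition~\ref{dualpsef} again so is $\mathcal{F}$. We therefore have metrics $g_\varepsilon$ on $\mathcal{F}^{*}$ with $\sqrt{-1}\Theta_{g_\varepsilon}\geq -\varepsilon\omega_X\otimes\id$. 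For the dual metric $g_\varepsilon^{*}$ on $\mathcal{F}$, the function $\varphi_\varepsilon:=\log|s|_{g_\varepsilon^{*}}$ satisfies $\sqrt{-1}\ddbar\varphi_\varepsilon\geq-\varepsilon\omega_X$ on $X_0$, and it extends across $X\setminus X_0$ by Theorem~\ref{thm-gr}. After normalising $\sup\varphi_\varepsilon=0$ and letting $\varepsilon\to0$, a weak limit $\varphi$ is psh on the compact space $X$, hence constant. The aim is to contradict this with the vanishing of $s$ along $W_0$. Concretely, I would show that the vanishing of $s$ forces $\varphi_\varepsilon$ to tend to $-\infty$ near $x$, uniformly in $\varepsilon$ in an integral sense, with positive Lelong-type mass along $W_0$ that survives in the limit.

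\emph{Main obstacle.} The delicate point is Step 2. The metrics $g_\varepsilon$ are singular and may blow up, so the vanishing of $s$ can a priori be compensated by poles of $g_\varepsilon^{*}$. Moreover, $W_0$ has codimension $2$, so no divisorial degree argument detects it directly. To handle this, I would blow up $W_0$ (after the local smoothing of Step 1), pull back to the exceptional divisor $D$, and compare the generic Lelong numbers of $\varphi_\varepsilon$ along $D$. The order of vanishing of $s$ contributes a positive amount. The singular part of $g_\varepsilon^{*}$ contributes with the opposite sign, and I would control it using that both $\mathcal{F}$ and $\mathcal{F}^{*}$ carry metrics with curvature bounded below by $-\varepsilon\omega$. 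The hope is that the two singular contributions cancel up to $O(\varepsilon)$; this mirrors the nonvanishing of sections of numerically flat bundles in \cite[Proposition 1.16]{DPS94}. If this step works, $s$ is nowhere vanishing near $x$, so $\mathcal{E}\to\mathcal{Q}^{**}$ is surjective there, which contradicts $x\in W_0$.
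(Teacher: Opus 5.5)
\textbf{There is a genuine gap in your Step 2.} Your framework is the paper's: a local quasi-\'etale smooth chart in codimension $2$, the section $s$ of $\Lambda^{[q]}\mathcal{E}^{*}[\otimes]\det\mathcal{Q}^{**}$, and normalized quasi-psh functions $\varphi_\varepsilon=\log|s|_{g_\varepsilon^*}$ whose limit is constant. But the argument stops at the decisive step. You never show that a zero of $s$ produces a Lelong number that survives in the limit; you write ``if this step works''. The mechanism you propose is not an argument. You blow up $W_0$ and hope two singular contributions cancel up to $O(\varepsilon)$ by using metrics on both $\mathcal{F}$ and $\mathcal{F}^*$. Those two families of metrics are unrelated, so nothing forces such a cancellation.

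More to the point, the obstacle you identify does not exist. By definition, $\sqrt{-1}\Theta_{g_\varepsilon}\geq -\varepsilon\omega_X\otimes\id$ on $\mathcal{F}^*$ means that $\log|u|_{g_\varepsilon^*}$ is quasi-psh for every local holomorphic section $u$ of $\mathcal{F}$. In particular it is locally bounded above, so $g_\varepsilon^*$ has no poles. If $g_\varepsilon$ blows up, $g_\varepsilon^*$ only goes to zero, which makes $|s|_{g_\varepsilon^*}$ smaller.

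The missing estimate is as follows.
\begin{itemize}
\item Take a local frame $e_1,\dots,e_N$ of the locally free pullback of $\mathcal{F}$ on the chart and write $s=\sum s_ie_i$. Then $|s|_{g_\varepsilon^*}\leq C_\varepsilon\sum_i|s_i|$.
\item Hence $\nu(\varphi_\varepsilon,p)\geq \operatorname{ord}_p(s)\geq 1$ at any zero $p$, for every $\varepsilon$. Neither $C_\varepsilon$ nor the sup-normalization affects Lelong numbers.
\item Upper semicontinuity of Lelong numbers under weak convergence then gives the limit a positive Lelong number, contradicting its constancy.
\end{itemize}
This is exactly the paper's Step~3. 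It uses only a pointwise zero, so no blow-up along $W_0$ and no codimension-$2$ mass is needed. It shows that $s$ vanishes nowhere on the chart wherever the pulled-back sheaves are locally free.

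You would also have to fill in three further points.
\begin{itemize}
\item \emph{Transferring Lelong numbers.} The Lelong bound lives on the smooth chart $\tilde U$, while convergence to a constant happens on $X$. The paper pulls back the metrics so that $\log|\nu^*s|=\nu^*\log|s|$, then uses Favre's comparison of Lelong numbers under finite maps on a resolution \cite{Fav99}.
\item \emph{Source of the metrics.} Definition~\ref{def-psef} does not give metrics on $\mathcal{F}^*$ itself with curvature $\geq -\varepsilon\omega_X$. It gives metrics $h_m$ on $\Sym^m$ with curvature $\geq -\omega_X$. Either work with $\frac1m\log|s^m|_{h_m^*}$ as the paper does, or justify weakly positively curved metrics, e.g.\ via Lemma~\ref{lemma-weak}, which needs $c_1=0$.
\item \emph{Descent to $X$.} Passing from ``surjective on the chart'' to $\mathcal{Q}=\mathcal{Q}^{**}$ downstairs should be made explicit. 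As in the paper's Step~4, take a Galois chart and use exactness of $(\nu_*(\bullet))^G$.
\end{itemize}
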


\begin{proof}
Set $X_{0}:=X_{\reg} \cap X_{\mathcal{E}}$. 
By \cite[Lemma 2.9]{IMZ26},
the quotient $\mathcal{Q}$ is locally free, and the sequence in the proposition is an exact sequence of vector bundles on $X_{0}$.
However, this is not enough, since $\codim (X \setminus X_{0}) \geq 3$ does not hold in general.
The proof is divided into four steps.

\begin{step}
Consider the reflexive sheaf
$$
\mathcal{F}:=\Lambda^{[s]} \mathcal{E} [\otimes] (\det \mathcal{Q})^{*},
$$
where $s$ is the rank of $\mathcal{Q}$.
Let $\tau \in H^0(X, \cF^*)$ be the natural section obtained from the inclusion
$
0 \to \mathcal{O}_{X} \to \mathcal{F}^{*}
$
induced by the exact sequence in the proposition.
Since $\mathcal{E}$ is pseudo-effective and $c_{1}(\mathcal{Q})=0$, 
there exist singular Hermitian metrics $\{h_{m}\}$ on $\Sym^{m}\mathcal{F}$
such that
$$
\sqrt{-1}\Theta_{h_{m}}(\Sym^{m}\mathcal{F})
\geq - \omega_{X} \otimes \id \quad\text{ on } X,
$$
where $\omega_{X}$ is a fixed K\"ahler form on $X$ (see Proposition \ref{prop-flat}).
The function
$
\log |\tau^{m}|_{h^{*}_{m}}
$
satisfies
$$
\sqrt{-1}\ddbar \Big( \frac{1}{m}\log |\tau^{m}|_{h^{*}_{m}} \Big) \geq \frac{1}{m} \omega_{X}
$$
on $X_{0}$,
and hence extends to a quasi-psh function on $X$ with the same curvature inequality,
since $\codim (X\setminus X_{0}) \geq 2$ and $X$ is normal.
Thus, after passing to a suitable normalization and a subsequence,
we may assume that the functions
$(1/m) \log |\tau^{m}|_{h^{*}_{m}}$ converge to a psh function on $X$, which is constant since $X$ is compact.
\end{step}
\begin{step}
Since $X$ has potentially klt singularities,
we can take a Zariski closed subset $Z_1\subset X$ of codimension $\geq 3$
such that $X\setminus Z_1$ has only quotient singularities.
Thus, for a sufficiently small open set $U \subset X \setminus Z_{1}$,
we can take a finite quasi-\'etale cover $\nu\colon V \to U$
such that $V$ is smooth.
Since $V$ is smooth and $\nu^{[*]} \mathcal{E}$ is reflexive,
there exists a Zariski closed subset $Z \subset U$ of codimension $\geq 3$
such that the reflexive pullback
$$
\mathcal{E}_{\orb}:=\nu^{[*]} \mathcal{E}
$$
is locally free on $U \setminus Z$.
Although $Z$ is defined only locally on $U$,
the non-reflexive locus of $\mathcal{Q}$ is a globally defined Zariski closed subset of $X$.
Thus, for our purposes, it suffices to prove that $\mathcal{Q}$ is reflexive on $U \setminus Z$.
\end{step}

\begin{step}
Let $\mathcal{S}_{\sat}$ be the saturation of $\nu^{[*]}\mathcal{S} \subset \nu^{[*]}\mathcal{E}$, and consider
\begin{align}\label{eq-orb}
0 \to \mathcal{S}_{\sat} \to \mathcal{E}_{\orb}=\nu^{[*]}\mathcal{E} \to \mathcal{Q}_{\tf}:= \mathcal{E}_{\orb}/\mathcal{S}_{\sat} \to 0 \quad\text{ on } V.
\end{align}
Note that $\mathcal{Q}_{\tf}$ is torsion-free on $V$.
As in Step~1, we obtain the natural section $\tau_{\orb} \in H^0(V, \mathcal{G}^*)$ induced by \eqref{eq-orb},
where
$$
\mathcal{G}:=\Lambda^{[s]} \mathcal{E}_{\orb}[\otimes] (\det \mathcal{Q}_{\tf})^{*}.
$$
In this step, we prove that $\tau_{\orb}$ is nowhere vanishing over $U \setminus Z$. Note that if $\tau_{\orb}$ is nowhere vanishing over $U \setminus Z$, then by \cite[Lemma 1.20]{DPS94}, both $\mathcal{S}_{\sat}$ and $\mathcal{Q}_{\tf}$ are locally free over $U\setminus Z$.

Consider singular Hermitian metrics $\{h_{\orb, m}\}$ on $\Sym^{m}\mathcal{G}$
defined by pulling back $\{h_{m}\}$, namely $h_{\orb, m}:=\nu^{*}h_{m}$.
More precisely, the metric $h_{m}$ is a priori defined only on $X_{\reg} \cap X_{\Sym^{m}\mathcal{F}}$.
Hence the pullback $h_{\orb, m}$ is defined on $V_1:=\nu^{-1}(X_{\reg} \cap X_{\Sym^{m}\mathcal{F}})$
via $\nu^{*} \big( \Sym^{m}\mathcal{G} \big)=\Sym^{m}\mathcal{F}$.
By $\codim (V\setminus V_1 )\geq 2$,
the metric $h_{\orb, m}$ extends to a singular Hermitian metric on $\Sym^{m}\mathcal{G}$.
By construction, we have
$$
\tau_{\orb}= \nu^{*}\tau \quad\text{and}\quad
\log |\tau^{m}_{\orb}|_{h_{\orb, m}^{*}}=\nu^{*}\bigl( \log |\tau^{m}|_{h_{m}^{*}}\bigr)
\quad\text{ on }  V_1.
$$
The function $\log |\tau^{m}_{\orb}|_{h_{\orb, m}^{*}}$ is quasi-psh on $V_1$, and hence extends across $V\setminus V_{1}$.
On the other hand, the function $\log |\tau^{m}|_{h_{m}^{*}}$ also extends to $X$.
Therefore, we obtain
\begin{align}\label{eq-pull}
\log |\tau^{m}_{\orb}|_{h_{\orb, m}^{*}}=\nu^{*}\bigl( \log |\tau^{m}|_{h_{m}^{*}}\bigr)
\quad\text{ on } V.
\end{align}

Assume for contradiction that $\tau_{\orb}(p)=0$ for some point $p \in \nu^{-1}(U\setminus Z)$.
To consider Lelong numbers on smooth varieties,
we take a resolution of singularities $\pi\colon \tilde{X} \to X$ and a smooth variety $\tilde{V}$
fitting into the commutative diagram
\[
\xymatrix{
\tilde{V} \ar[r]^{\tilde{\nu}\quad } \ar[d]^{\tilde{\pi}} & \pi^{-1}(U) \ar@{^{(}->}[r]  & \tilde{X} \ar[d]^{\pi} \\
V  \ar[r]^{\nu}     & U \ar@{^{(}->}[r] & X.
}
\]
Take a point $\tilde{p} \in \tilde{\pi}^{-1}(p)$.
Since $\mathcal{G}$ is locally free around $p$,
the Lelong number
$$
\frac{1}{m}\nu\bigl(\tilde{\pi}^{*}  (\log |\tau^{m}_{\orb}|_{h_{\orb, m}^{*}}), \tilde{p}\bigr)
=
\frac{1}{m}\nu\bigl( \log |\tilde{\pi}^{*} \tau^{m}_{\orb}|_{\tilde{\pi}^{*} h_{\orb, m}^{*}}, \tilde{p}\bigr)
$$
is bounded from below uniformly in $m$.
Indeed, writing $\tau^{m}_{\orb}$ locally as
$$
\tau^{m}_{\orb}=\sum_I \tau_I e_I,
$$
each holomorphic function $\tau_I$ vanishes to order $\geq m$ at $p$ by $\tau_{\orb}(p)=0$.
Here $\{e_i\}_{i=1}^r$ is a local frame of $\mathcal{G}$, $I$ is a multi-index of degree $m$, and $e_I:=\prod_{i\in I} e_i$.
Moreover, since $\log |u|_{h_{\orb, m}^{*}}$ is quasi-psh for any local section $u$,
the inner product $\langle e_I, e_J \rangle_{h_{\orb, m}^{*}}$ is locally bounded from above.
Consequently, there exists a constant $C>0$ such that
$$
|\tau^{m}_{\orb}|_{h_{\orb, m}^{*}} \leq C \sum_I |\tau_I|.
$$
This implies that the Lelong number of $(1/m)\log |\tau^{m}_{\orb}|_{h_{\orb, m}^{*}}$ at $p$ is at least $1$.

By \cite[Theorem 2]{Fav99},  the Lelong number
$$
\frac{1}{m}\nu\bigl(\pi^{*} \log |\tau^{m}|_{h_{m}^{*}}, \tilde{\nu}(\tilde{p})\bigr)
$$
is also bounded from below uniformly in $m$.
However, the function $(1/m)\pi^{*} \log |\tau^{m}|_{h_{m}^{*}}$ converges to a constant by \eqref{eq-pull},
which is a contradiction.
\end{step}

\begin{step}
After taking a Galois closure, we may assume that $\nu \colon V \to U$ is a Galois cover with Galois group $G$.
By Step~3, both $\mathcal{S}_{\sat}$ and $\mathcal{Q}_{\tf}$ are locally free over $U\setminus Z$.
In particular, $\mathcal{S}_{\sat}$, $\mathcal{E}_{\orb}$, and $\mathcal{Q}_{\tf}$ are reflexive over $U\setminus Z$.
Hence, their $G$-invariant pushforwards $(\nu_{*}(\bullet))^{G}$ are also reflexive over $U\setminus Z$ by \cite[Lemma~A.3]{GKKP11}.
Thus, since $\mathcal{S}_{\sat}$, $\mathcal{E}_{\orb}$, and $\mathcal{Q}_{\tf}$
are the pullbacks of $\mathcal{S}$, $\mathcal{E}$, and $\mathcal{Q}$ over $U_{\reg}\cap U_{\mathcal{E}}$,
we obtain
$$
(\nu_{*}(\mathcal{S}_{\sat}))^{G} = \mathcal{S}^{**}=\mathcal{S}, \quad
(\nu_{*}(\mathcal{E}_{\orb}))^{G} = \mathcal{E}^{**}=\mathcal{E}, \quad
(\nu_{*}(\mathcal{Q}_{\tf}))^{G} = \mathcal{Q}^{**}.
$$
on $U\setminus Z$. 
Together with the exactness of the functor $(\nu_{*}(\bullet))^{G}$ (see \cite[Lemma A.3]{GKKP11}),
we deduce
$$
0 \to \mathcal{S} \to \mathcal{E} \to \mathcal{Q}^{**} \to 0
\quad\text{ on } U\setminus Z.
$$
This shows that $\mathcal{Q} \cong \mathcal{Q}^{**}$ on $U\setminus Z$.
This completes the proof.
\end{step}
\end{proof}

The next proposition analyzes the singularities of the limit metric on $\mathcal{O}_{\mathbb{P}(\mathcal{E})}(1)$
induced by Hermitian metrics on $\mathcal{E}$.

\begin{proposition}\label{addlemma}
Let $\{H_{\e}\}$ be a sequence of smooth Hermitian metrics on $\mathbb{C}^{n+1}$,
and let $h_{\e}$ be the induced metrics on $\mathcal{O}_{\mathbb P^{n}} (1)$ via the quotient map
$\mathbb  P^{n} \times \mathbb{C}^{n+1} \to \mathcal{O}_{\mathbb P^{n}} (1)$.

$(1)$ After applying a unitary change of coordinates on $\mathbb{C}^{n+1}$, rescaling $H_{\e}$, and passing to a subsequence,
the metrics $\{h_{\e}\}$ converge to a singular  metric $h$ on $\mathcal{O}_{\mathbb P^{n}} (1)$
whose weight can be written as
$$
\log \sum_{i=0}^{n} \lambda_i |\zeta_i|^2,
$$
where $[\zeta_{0}\colon \cdots \colon\zeta_{n}]$ are the homogeneous coordinate on $\mathbb{P}^{n}$
and $\{\lambda_{i}\}$ are real numbers with 
$\lambda_0 \geq \cdots \geq \lambda_n \geq 0$.

\smallskip
$(2)$ Suppose that the limit metric $h$ is singular $($i.e.\ $\lambda_n = 0$$)$.
Let $g$ be a smooth metric on $\mathcal{O}_{\mathbb P^{n}} (1)$.
Then, there exists $0<\delta <1$ such that the space of $L^{2}$-integrable sections
$$
H^0 \big(\mathbb P^n,  K_{\mathbb P^n} \otimes \mathcal{O}_{\mathbb P^n} (n+2) \otimes \mathcal{I}
\big( (g^{(1-\delta)} \cdot h^{\delta})^{(n+2)}\big)\big)
$$
is nonzero and is a proper subspace of $H^0 (\mathbb P^n,  K_{\mathbb P^n} \otimes \mathcal{O} (n+2))$.
\end{proposition}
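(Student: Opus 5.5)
For part (1), I would pass to the limit by diagonalizing. Write $H_{\e}$ as a positive definite Hermitian matrix and choose a unitary $U_{\e}$ diagonalizing the matrix that governs the induced metric on the quotient $\mathcal{O}_{\mathbb P^n}(1)$. Concretely, the weight of $h_{\e}$ at $[\zeta]$ is $\log \langle \zeta, \zeta\rangle_{A_{\e}}$ for $A_{\e}=H_{\e}^{-1}$ or $H_{\e}$, depending on the convention for the quotient metric; I would fix this by a direct computation of the quotient norm on the fibre $\mathbb{C}^{n+1}\to \mathcal{O}(1)_{[\zeta]}$. After the unitary change, the weight becomes $\log\sum_i \lambda_{i,\e}|\zeta_i|^2$ with $\lambda_{0,\e}\ge\cdots\ge\lambda_{n,\e}>0$. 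Rescaling $H_{\e}$ by a positive constant shifts the weight by a constant, so I normalize $\lambda_{0,\e}=1$. Since the unitary group is compact and the eigenvalues lie in $[0,1]$, after passing to a subsequence both $U_{\e}$ and $\lambda_{i,\e}$ converge. The weights $\log\sum_i\lambda_{i,\e}|\zeta_i|^2$ are then uniformly bounded above. They converge pointwise, and locally uniformly on the complement of the limit polar set, to $\log\sum_i\lambda_i|\zeta_i|^2$ with $\lambda_0=1\ge\lambda_1\ge\cdots\ge\lambda_n\ge 0$. In particular they converge in $L^1_{\mathrm{loc}}$, which is the notion of convergence of singular metrics I would use.

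For part (2), let $k$ be the largest index with $\lambda_k>0$. The normalization $\lambda_0=1$ gives $k\ge 0$, and the hypothesis $\lambda_n=0$ gives $k\le n-1$. The polar set of $h$ is the linear subspace $L=\{\zeta_0=\cdots=\zeta_k=0\}$, of codimension $k+1$, and near $L$ the weight of $h$ is comparable to $\log(|\zeta_0|^2+\cdots+|\zeta_k|^2)$. Next I identify $K_{\mathbb P^n}\otimes\mathcal{O}(n+2)\cong\mathcal{O}(1)$, so the ambient space of sections is the space of linear forms $s=\sum_i a_i\zeta_i$. Because $g$ is smooth, its weight is locally bounded. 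Hence the multiplier ideal $\mathcal{I}\big((g^{1-\delta} h^{\delta})^{n+2}\big)$ coincides with $\mathcal{I}\big((n+2)\delta\,\log\sum_{i\le k}|\zeta_i|^2\big)$. In local transversal coordinates $w\in\mathbb{C}^{k+1}$ to $L$, the $L^2$ condition becomes the local integrability of $|s|^2/|w|^{2(n+2)\delta}$.

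I choose $\delta:=(k+1)/(n+2)$. The bound $k+1\le n<n+2$ gives $0<\delta<1$. With this choice $(n+2)\delta=k+1$, and I check two cases.
\begin{itemize}
\item If $s$ does not vanish identically on $L$, for instance $s=\zeta_n$, then near a generic point of $L$ we must integrate $1/|w|^{2(k+1)}$ over a ball in $\mathbb{C}^{k+1}$. In polar coordinates this is $\int r^{-2k-2}r^{2k+1}\,dr=\int dr/r$, which diverges, so $s$ is not $L^2$.
\item If $s\in\mathrm{span}(\zeta_0,\dots,\zeta_k)$, then $|s|^2\lesssim|w|^2$. The integrand becomes $r^{2-2k-2}r^{2k+1}=r$, which is integrable. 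Away from $L$ there is nothing to check.
\end{itemize}
So the $L^2$ sections are exactly $\mathrm{span}(\zeta_0,\dots,\zeta_k)$, a space of dimension $k+1$ with $1\le k+1\le n$. It is therefore nonzero and a proper subspace of the $(n+1)$-dimensional space $H^0(\mathbb P^n,\mathcal{O}(1))$. This uses that the multiplier-ideal condition is linear in $s$.

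I expect the main care to be needed in part (1), namely getting the quotient-metric convention and the convergence right. One has to identify correctly which eigenvalues become zero, and to make sure that the limit after the $\e$-dependent unitary change is the metric one wants. Since the convergent subsequence of unitaries gives a fixed limiting unitary, the change of coordinates in the statement is a single fixed one. The integrability computation in part (2) is elementary once the transversal coordinates to $L$ are set up.
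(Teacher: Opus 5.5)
Your proof is correct. Part (1) follows the paper's proof essentially verbatim: diagonalize by unitaries $U_{\varepsilon}$, normalize $\lambda_{0,\varepsilon}=1$, and extract convergent subsequences using the compactness of $U(n+1)$. Your hesitation between $H_{\varepsilon}$ and $H_{\varepsilon}^{-1}$ is harmless, since the argument applies verbatim to whichever matrix governs the weight.

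For part (2) you take a different, local route.

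\emph{The paper's route.} It normalizes $g$ to the metric induced by the standard inner product. It then computes the global norms $\|e_k\|^2_{H_\delta}$ in the affine chart $\zeta_0=1$, where the polar set $L=\{\zeta_0=\cdots=\zeta_p=0\}$ lies at infinity. Explicit polar-coordinate integrals then give the full admissible range $n-p<(1-\delta)(n+2)\le n-p+1$, equivalently $(n+2)\delta\in[p+1,p+2)$.

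\emph{Your route.} You note that $g$ has bounded weight and therefore drops out of the multiplier ideal. Near $L$ the weight of $h$ is $\log\sum_{i\le k}\lambda_i|w_i|^2$ in transversal coordinates. So everything reduces to the integrability of $|w|^{2j}/|w|^{2(k+1)}$ on $\mathbb{C}^{k+1}$. This identifies the multiplier ideal with the ideal sheaf of $L$, and the $L^2$ sections with exactly the linear forms vanishing on $L$. Your choice $\delta=(k+1)/(n+2)$ is precisely the endpoint $(1-\delta)(n+2)=n-p+1$ of the paper's range.

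\emph{Comparison.} Your version is shorter and more conceptual. It handles an arbitrary smooth $g$ without any normalization. It also correctly allows $k=0$; the paper writes $0<p<n$ where $0\le p<n$ is meant. The paper's explicit computation has the advantage of exhibiting every admissible $\delta$, not just one.

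One small point, on which the paper is equally brief: passing from pointwise to $L^1_{\mathrm{loc}}$ convergence of the weights in (1) deserves a word. For instance, use the lower bound $\varphi_{\varepsilon}\ge\log|(U_{\varepsilon}^{-1}\zeta)_0|^2$ together with the uniform upper bound, or apply Hartogs' compactness lemma.
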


\begin{remark}\label{rem-add}
(1) The second conclusion does not hold for an arbitrary singular metric
$h$ on $\mathcal{O}_{\mathbb P^n} (1)$.
A key point in the proposition is that $h$ arises as a limit of metrics induced by Hermitian metrics on $\mathbb{C}^{n+1}$.
Indeed, in the one-dimensional case (i.e.\,$n=1$),
consider the singular  metric $h$ on $\mathcal{O}_{\mathbb{P}^1} (1)$ such that
$$
\sqrt{-1}\Theta_{h}=\frac{1}{3}([p] + [q] + [r]),
$$
where $p, q, r \in \mathbb{P}^1$ are distinct points.
Then, in the case $\delta=1$, the line bundle
$$
K_{\mathbb P^1} \otimes \mathcal{O}_{\mathbb P^1} (n+2)\otimes \mathcal{I}
\big( (g^{(1-1)} \cdot h^{1})^{3}\big)
\cong \mathcal{O}_{\mathbb P^1} (-2)
$$
has no nonzero sections.
Moreover, for any $1>\delta>0$,
the space of $L^{2}$-integrable sections is not a proper subspace of the ambient space,
by $\mathcal{I}\big( (g^{(1-\delta)} \cdot h^{\delta})^{3}\big)=\mathcal{O}_{X}$.

(2) The proposition concerns limit metrics arising from the family $\{H_{\e}\}$ in the definition of weakly positively curved sheaves.
From this viewpoint, it is natural to study limit metrics appearing in the definition of pseudo-effective sheaves.
Although this is not used in this paper, we formulate the following question.
\begin{question}\label{prob-limit}
Let $\{H_{m}\}$ be smooth Hermitian metrics on the $m$-th symmetric power $\Sym^{m}(\mathbb{C}^{n+1})$,
and let $\{h_{m}\}$ be the  metrics on $\mathcal{O}_{\mathbb{P}^{n}} (m)$
induced by the surjective morphism 
$$\mathbb{P}^{n} \times \Sym^{m}\mathbb{C}^{n+1} \to \mathcal{O}_{\mathbb{P}^{n}} (m). $$
Can we describe the limit metric $h$ obtained from $\{h_{m}^{1/m}\}$, or the singularities of $h$?
\end{question}
\end{remark}

\begin{proof}[Proof of Proposition \ref{addlemma}]

(1)
Let $G$ be the standard Hermitian metric on $\mathbb{C}^{n+1}$ and let $(e_0,\dots,e_n)$ be the standard basis.
Then, there exists a unitary transformation $U_{\e}\in U(n+1)$ such that
$(U_{\e}(e_0),\dots,U_{\e}(e_n))$ is an orthogonal basis with respect to $H_{\e}$.
Therefore, we can write
\[
G=\sum_{i=0}^{n} e_{i}^{*} \otimes \overline{e_{i}^{*}}
=\sum_{i=0}^{n} U_{\e}(e_{i})^{*} \otimes \overline{U_{\e}(e_{i})^{*}},
\quad
H_{\e}= \sum_{i=0}^{n} \lambda_{i,\e}\, U_{\e}(e_{i})^{*} \otimes \overline{U_{\e}(e_{i})^{*}},
\]
where $\lambda_{0,\e}\ge \cdots \ge \lambda_{n,\e}>0$ are the eigenvalues of $H_{\e}$ with respect to $G$.
Replacing $H_{\e}$ with $(1/\lambda_{0,\e})\cdot H_{\e}$, we may assume that $\lambda_{0,\e}=1$.

Let $h_{\e}$ be the induced metric on $\mathcal{O}_{\mathbb{P}^{n}}(1)$.
After passing to a subsequence, we may assume that $\lambda_{i,\e}\to\lambda_i\in[0,1]$ for all $i$,
and that $U_{\e}\to U$, since the unitary group is compact.
Then $h_{\e}$ converges (in the sense of weights in $L^{1}_{\mathrm{loc}}$)
to a singular metric on $\mathcal{O}_{\mathbb{P}^{n}}(1)$, which we denote by $h$.
More precisely, for $\zeta=(\zeta_0,\dots,\zeta_n)\in\mathbb{C}^{n+1}\setminus\{0\}$,
let $[\,\zeta\,]\in\mathbb{P}^n$ be the corresponding point.
If we set $\eta_{\e}:=U_{\e}^{-1}\zeta$, then the weight function of $h_{\e}$ is given by
\[
\varphi_{\e}([\,\zeta\,])=\log\Bigl(\sum_{i=0}^{n}\lambda_{i,\e}\,|\eta_{\e,i}|^{2}\Bigr)
=\log\Bigl(\sum_{i=0}^{n}\lambda_{i,\e}\,|(U_{\e}^{-1}\zeta)_i|^{2}\Bigr).
\]
Hence $\varphi_{\e}$ converges to
\[
\varphi([\,\zeta\,])=\log\Bigl(\sum_{i=0}^{n}\lambda_{i}\,|(U^{-1}\zeta)_i|^{2}\Bigr),
\]
which defines the limit metric $h$ on $\mathcal{O}_{\mathbb{P}^{n}}(1)$.
Finally, after changing homogeneous coordinates by the unitary transformation $U$,
the weight of $h$ can be written as $\log \sum_{i=0}^{n}\lambda_i|\zeta_i|^{2}$.

\smallskip

(2)
For the second conclusion, after applying a unitary change of coordinates,
we may assume that $G$ is the standard Hermitian inner product, that $g$ is induced by $G$, and that
$h$ can be written as in {\rm (1)}.
Fix a basis $(e_0,\dots,e_n)$ of
$$
\mathbb C^{n+1} \cong H^0 (\mathbb P^n,  \mathcal{O}_{\mathbb P^n} (1)) \cong
H^0 (\mathbb P^n,  K_{\mathbb P^n} \otimes \mathcal{O}_{ \mathbb P^n}(n+2)),
$$
which induces the homogeneous coordinates in {\rm (1)}.

Define a singular  metric $H_\delta$ on $\mathcal{O}_{\mathbb P^n}(n+2)$ by
$$
H_\delta:= (g^{(1-\delta)} \cdot h^{\delta})^{(n+2)}.
$$
By assumption, there exists an integer $0<p<n$ such that
$$
\lambda_0 \geq \cdots \geq \lambda_{p} > \lambda_{p+1}= \cdots =\lambda_n =0.
$$
A straightforward computation shows that, for any $0 \leq k \leq n$, we have
\begin{align*}
\|e_0 \|^2_{ H_\delta} &= \int_{\mathbb C^{n}}
\frac{1}{
 (1+\sum_{i=1}^{n} |z_i|^2)^{(1-\delta) (n+2)} \cdot (\lambda_{0}+\sum_{i=1}^{p}  \lambda_{i} |z_i|^2 )^{\delta (n+2)} } \, dV\\
\|e_k\|^2_{ H_\delta} &= \int_{\mathbb C^{n}}
\frac{|z_{k}|^{2}}{
 (1+\sum_{i=1}^{n} |z_i|^2)^{(1-\delta) (n+2)} \cdot (\lambda_{0}+\sum_{i=1}^{p}  \lambda_{i} |z_i|^2 )^{\delta (n+2)} } \, dV,
\end{align*}
where $dV$ denotes the Lebesgue measure on $\C^n$.

From these expressions, we obtain the following integrability conditions:
\begin{itemize}
\item $\|e_k \|^2_{ H_\delta} < \infty$ for $0 \leq k \leq p$ if and only if $(1-\delta)(n+2) > (n -p)$.
\item $\|e_k \|^2_{ H_\delta} < \infty$ for $p+1 \leq k \leq n$ if and only if $(1-\delta)(n+2) > (n -p +1)$.
\end{itemize}
Choose $0<\delta<1$ such that $n -p +1 \geq (1-\delta)(n+2) > n -p$.
Then, the space
$$
H^0 (\mathbb P^n,  K_{\mathbb P^n} \otimes \mathcal{O}_{\mathbb P^n} (n+2) \otimes \mathcal{I}
\big( (g^{(1-\delta)} \cdot h^{\delta})^{(n+2)}\big)\big)
$$
is nonzero, since it contains the subspace spanned by $\{e_{i}\}_{i=0}^{p}$,
and it is properly contained in $H^0 (\mathbb P^n,  K_{\mathbb P^n} \otimes \mathcal{O}_{\mathbb P^n} (n+2))$
since it does not contain $e_{p+1},\dots,e_n$.

Checking these integrability conditions is straightforward.
For the reader's convenience, we verify the second one.
(The first one can be proved in the same way.)
It is enough to check integrability as $|z|\to\infty$.
Since $\lambda_0,\dots,\lambda_p$ are positive constants, they do not affect integrability,
and we may assume $\lambda_i=1$ for simplicity.
Set 
$$\text{
$a:=(n+2)(1-\delta)$, \quad $b:=(n+2)\delta$, \quad  $q:=n-p$.
}
$$
Let us consider the integrability of
\[
I:=\int_{\C^{n}}
\frac{|z_{p+1}|^{2}}
{\bigl(1+\sum_{i=1}^{n}|z_{i}|^{2}\bigr)^{a}\,
 \bigl(1+\sum_{i=1}^{p}|z_{i}|^{2}\bigr)^{b}}
\,dV. 
\]
Write $z=(z',z'')\in \C^p\times \C^q$ with
\[
z'=(z_1,\dots,z_p),\quad z''=(z_{p+1},\dots,z_n),
\]
and set $r:=\|z'\|$, $s:=\|z''\|$ so that $\|z\|^{2}=r^{2}+s^{2}$.
Using polar coordinates on $\C^p\cong \R^{2p}$ and $\C^q\cong \R^{2q}$, we write
\[
z'=r\,\xi,\quad (r\in[0,\infty),\ \xi\in S^{2p-1}),
\quad
z''=s\,\omega,\quad (s\in[0,\infty),\ \omega\in S^{2q-1}).
\]
The Euclidean volume forms can be written as
\[
dV_{\C^p}=r^{2p-1}\,dr\,d\sigma_p(\xi),
\quad
dV_{\C^q}=s^{2q-1}\,ds\,d\sigma_q(\omega),
\]
where $d\sigma_p$ and $d\sigma_q$ denote the standard sphere measures on
$S^{2p-1}$ and $S^{2q-1}$, respectively.

Since $|z_{p+1}|^2$ depends only on $z''$, by writing $z_{p+1}=s\omega_1$ and using Fubini's theorem, we obtain
\begin{align*}
I
&=\int_0^\infty\!\!\int_0^\infty\!\!\int_{S^{2p-1}}\!\!\int_{S^{2q-1}}
\frac{s^2|\omega_1|^2}{(1+r^2+s^2)^{a}(1+r^2)^{b}}\,
r^{2p-1}s^{2q-1}\,d\sigma_q(\omega)\,d\sigma_p(\xi)\,ds\,dr\\
&=C
  \int_0^\infty\!\!\int_0^\infty
\frac{r^{2p-1}s^{2q+1}}{(1+r^2+s^2)^{a}(1+r^2)^{b}}\,ds\,dr,
\end{align*}
where $C:=\bigl(\int_{S^{2p-1}}d\sigma_p\bigr)\bigl(\int_{S^{2q-1}}|\omega_1|^2\,d\sigma_q\bigr)>0$.
Thus it suffices to consider
\begin{align*}
I'
&:=\int_0^\infty\!\!\int_0^\infty
\frac{r^{2p-1}s^{2q+1}}{(1+r^2+s^2)^{a}(1+r^2)^{b}}\,ds\,dr \\
&=\int_0^\infty   \frac{r^{2p-1}}{(1+r^2)^{b}}\,dr
\Bigl( \int_0^\infty   \frac{s^{2q+1}}{(1+r^2+s^2)^{a}}\,ds \Bigr). 
\end{align*}

Changing variables $s=\sqrt{1+r^2}\,t$, we have
\[
1+r^2+s^2=(1+r^2)(1+t^2),\quad
ds=\sqrt{1+r^2}\,dt,\quad
s^{2q+1}=(1+r^2)^{q+\frac12}t^{2q+1}.
\]
Hence, we obtain 
\[
\int_0^\infty   \frac{s^{2q+1}}{(1+r^2+s^2)^{a}}\,ds
=(1+r^2)^{q+1-a}\, \int_0^\infty \frac{t^{2q+1}}{(1+t^2)^{a}}\,dt.
\]
Therefore, we have 
\begin{align*}
I'
&=\Bigl(\int_0^\infty \frac{t^{2q+1}}{(1+t^2)^{a}}\,dt\Bigr) \cdot 
\Bigl( \int_0^\infty r^{2p-1}(1+r^2)^{q+1-a-b}\,dr \Bigr)\\
&=\Bigl(\int_0^\infty \frac{t^{2q+1}}{(1+t^2)^{a}}\,dt\Bigr)\cdot 
\Bigl(  \int_0^\infty r^{2p-1}(1+r^2)^{q+1-(n+2)}\,dr \Bigr), 
\end{align*}
where we used $a+b=n+2$.

Now the first integral converges if and only if $a>q+1$, i.e.\ $a>n-p+1$,
since its integrand behaves like $t^{2q+1-2a}$ as $t\to\infty$.
The second integral always converges, since its integrand behaves like
$r^{2p-1+2(q+1-(n+2))}=r^{2(p+q-(n+1))-1}$ as $r \to \infty$.
Thus $I<\infty$ holds if and only if $a>n-p+1$,
which is exactly the second condition.
\end{proof}

\section{Proofs of the main results}\label{sec-proof}
\subsection{Characterization of pseudo-effective sheaves}\label{subsec-pre}

In this subsection, we study a torsion-free sheaf $\mathcal{E}$ on a normal compact analytic variety $X$. 
We then establish Theorem \ref{thm-psef}, 
which gives a characterization of the pseudo-effectivity of $\mathcal{E}$ in terms of the hyperplane bundle 
$\mathcal{O}_{\bb{P}(\mathcal{E})}(1)$. 
As an application, we give a criterion for the condition $c_{1}(\det \mathcal{E})=0$ in Proposition \ref{prop-flat}. 

We first fix the notation used throughout this subsection. 

\begin{setup}\label{setup}
Let $\mathcal{E}$ be a torsion-free sheaf on a normal compact analytic variety $X$.
Let $\pi_{\mathcal{E}} \colon  \bb{P}(\mathcal{E}) \to X$
denote the normalization of the irreducible component of
the projectivization ${\bf{Proj}}(\bigoplus_{m=0}^{\infty}\operatorname{Sym}^{m} \mathcal{E})$ 
of the graded sheaf
$\bigoplus_{m=0}^{\infty}\operatorname{Sym}^{m} \mathcal{E}$ that dominates $X$. 
Let $\pi \colon P \to \bb{P}(\mathcal{E})$
be a desingularization of $\bb{P}(\mathcal{E})$ fitting into the commutative diagram: 
\begin{align}\label{eq-comm}
\xymatrix{
\bb{P}(\mathcal{E}) \ar[d]_{\pi_{\mathcal{E}}}&  & P\ar[ll]_{\hspace{0cm} \pi}  \ar@/^16pt/[dll]^{p} \\
X. &  &
}
\end{align}
Set $X_{0} := X_{\reg} \cap X_{\mathcal{E}}$ and $P_{0} := p^{-1}(X_{0})$.
Note that $\codim (X \setminus X_{0}) \geq 2$.
We may assume that $\pi \colon P \to \bb{P}(\mathcal{E})$ is an isomorphism over $X_{0}$ 
so that $p \colon P_{0} \to X_{0}$ can be identified with the projective space bundle
$\bb{P}(\mathcal{E}|_{X_{0}}) \to X_{0}$ associated with the locally free sheaf $\mathcal{E}|_{X_{0}}$.
We use the following notation:
\begin{itemize}
  \item[$\bullet$] $L := \pi^{*} \mathcal{O}_{\bb{P}(\mathcal{E})}(1)$,
    where $\mathcal{O}_{\bb{P}(\mathcal{E})}(1)$ is the pullback of the hyperplane bundle on
    ${\bf{Proj}}(\bigoplus_{m=0}^{\infty} \operatorname{Sym}^{m} \mathcal{E})$ to the normalization of the main component;
  \item[$\bullet$] $\omega_{P}$ is a K\"ahler form on $P$;
  \item[$\bullet$] $\omega_{X}$ is a K\"ahler form on $X$;
  \item[$\bullet$] $\Lambda$ is an effective $p$-exceptional divisor such that
    \[
    p_{*}\mathcal{O}_{P}(m(L + \Lambda)) = \operatorname{Sym}^{[m]}\mathcal{E}.
    \]
    See \cite[\S III.5, Lemma 5.10, pp.~107--108]{Nak04} for the existence of such a divisor $\Lambda$.
\end{itemize}
After replacing $P$ by a further blow-up, we may assume that
$$
\text{
$P \setminus P_{0}$ is divisorial and the support of $\Lambda$ coincides with $P \setminus P_{0}$.
}
$$
\end{setup}

Then, we have the following theorem. 

\begin{thm}\label{thm-psef}
In the notation of Setup \ref{setup}, 
the sheaf $\mathcal{E}$ is pseudo-effective $($in the sense of Definition \ref{def-psef}$)$
if and only if 
the line bundle $L+\Lambda$ admits 
singular metrics $\{h_{\e}\}_{\e>0}$ satisfying the following conditions$:$
\begin{itemize}
\item[$(1)$] $\sqrt{-1}\Theta_{h_{\e}} + \e p^{*}\omega_{X} \geq \delta_{\e} \omega_{P}$ holds on $P$ 
for some $\delta_{\e}>0$$;$ 
\item[$(2)$] $\{x \in P\,|\, \nu(h_{\e}, x) >0 \}$ does not dominate $X$, 
where $\nu(h_{\e}, x)$ denotes the Lelong number of the local weight of $h_{\e}$. 
\end{itemize}
\end{thm}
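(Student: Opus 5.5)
We prove the two implications separately. Throughout, the basic dictionary is that a singular metric $H$ on $\Sym^{m}\mathcal{E}|_{X_0}$ with $\sqrt{-1}\Theta_{H}\geq -\omega_X\otimes\id$ corresponds to a singular metric on $\mathcal{O}_{\bb{P}(\mathcal{E}|_{X_0})}(m)=mL|_{P_0}$. The correspondence goes through the quotient $p^{*}\Sym^{m}\mathcal{E}\to mL$ in one direction. In the other direction one uses the fibrewise $L^{2}$ (direct image) construction, or equivalently the identification $p_{*}\mathcal{O}_P(m(L+\Lambda))=\Sym^{[m]}\mathcal{E}$ from Setup \ref{setup}. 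The divisor $\Lambda$ is exactly what is needed to extend metrics across $P\setminus P_0$.

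\emph{Pseudo-effective $\Rightarrow$ metrics.} Fix $\e>0$ and choose $m$ with $1/m<\e/2$. Take $h_m$ on $\Sym^{m}\mathcal{E}|_{X_0}$ with $\sqrt{-1}\Theta_{h_m}\geq -\omega_X\otimes\id$. By Proposition \ref{genersurjet} applied to $p^{*}\Sym^m\mathcal{E}\to mL$ on $P_0$, we obtain a metric on $mL|_{P_0}$, and its $m$-th root $g_m$ on $L|_{P_0}$ has curvature $\geq -\frac1m p^{*}\omega_X$. Locally, the weight of $g_m$ is $\frac1m\log\sum|\langle s_I,\cdot\rangle|^2_{h_m}$-type expressions, with the $s_I$ generating $\Sym^m\mathcal{E}$.

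Next we extend $g_m$ across $P\setminus P_0$ as a metric on $L+\Lambda$. Locally near a point of $X\setminus X_0$, the sections of $\Sym^{[m]}\mathcal{E}$ give sections of $m(L+\Lambda)$, and these generate it outside a non-dominant locus. Upper boundedness of $h_m^{*}$-norms (as in Step 3 of Proposition \ref{prop-ref}) shows that the weight of $g_m\cdot e^{-\log|s_\Lambda|^2}$ is bounded above near $P\setminus P_0$. It therefore extends as a quasi-psh weight, and the curvature inequality persists because the added current $[\Lambda]$ is positive. This step is the main obstacle: we need a uniform local comparison between $\mathcal{I}$-type generators of $\Sym^{[m]}\mathcal{E}$ and $m(L+\Lambda)$ near the exceptional locus, using only that the weights are quasi-psh on $P_0$. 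If needed, we will first replace $\Lambda$ by a multiple and use Nakayama's construction more explicitly.

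To get strict positivity (1), fix a smooth metric $g_0$ on $L+\Lambda$. Since $L+\Lambda$ is $p$-big (it is relatively ample on $P_0$), there is $C$ with $\sqrt{-1}\Theta_{g_0}+Cp^{*}\omega_X\geq 2\delta\omega_P-[\text{something effective}]$. More cleanly, we take a Kähler current $T$ on $P$ in $c_1(L+\Lambda)+Cp^*\omega_X$ with analytic singularities along a non-dominant set. This exists because $L+\Lambda$ is $p$-big and $P$ is Kähler: we take $\omega_P$ and write $\omega_P=\theta(L+\Lambda)+Cp^*\omega_X+[\text{exceptional}]$ after a negativity-lemma adjustment. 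We then set $h_\e:=g_m^{1-t}\cdot(\text{metric of }T)^{t}$ with $t$ small so that $tC<\e/2$; this gives curvature $\geq -\e p^*\omega_X+t\,\omega_P$.

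For (2), the Lelong numbers of $g_m$ at $x\in P_0$ in a general fiber vanish, because $h_m$ is a finite inner product at a general point $y\in X$. There the weight is $\frac1m\log$ of a positive definite Hermitian form in fibre coordinates, hence bounded on $p^{-1}(y)$. Therefore the positive-Lelong-number locus is contained in $p^{-1}$ of the measure-zero set where $h_m$ degenerates, together with $P\setminus P_0$ and the singular set of $T$. The first of these is a countable union of analytic sets by Siu's theorem, and since it does not contain $p^{-1}$ of a general point, it does not dominate $X$.

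\emph{Metrics $\Rightarrow$ pseudo-effective.} Given $h_\e$, we push forward. The bundle $m(L+\Lambda)=K_{P/X}+\bigl(m(L+\Lambda)-K_{P/X}\bigr)$. On $P_0$ we have $-K_{P/X}=rL-p^*\det\mathcal{E}$, and we equip the twist with $h_\e^{m}$ times a fixed smooth metric on $rL$, absorbing the negativity by increasing $m$. Using condition (2), the multiplier ideal $\mathcal{I}(h_\e^{m})$ is trivial on the general fibre, so the $L^2$ metric on $p_*\mathcal{O}_P(m(L+\Lambda))=\Sym^{[m]}\mathcal{E}$ is generically finite. By the Berndtsson–Păun–Takayama positivity of direct images (in the singular form of \cite{PT18,HPS18}), it has curvature $\geq -(m\e+C)\omega_X$. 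Dividing by $m$ and choosing $\e$ small relative to $1/m$ (after rescaling $\omega_X$ to normalize), we obtain metrics on $\Sym^m\mathcal{E}|_{X_0}$ with curvature $\geq -\omega_X\otimes\id$.
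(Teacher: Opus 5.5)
\textbf{Gap: extending the metric across $P\setminus P_0$.} Your ``only if'' outline uses the same ingredients as the paper's Lemma~\ref{lemm-metric}: quotient metrics $g_m$ on $L|_{P_0}$, mixing with a fixed metric whose curvature is a K\"ahler current modulo $Cp^*\omega_X$, and the Lelong-number count. The step you flag as the main obstacle is extending the metric across $P\setminus P_0$ as a metric on $L+\Lambda$. That step is the real content of the theorem (the paper's Theorem~\ref{thm-extension}), and your proposed mechanism does not supply it.

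\begin{itemize}
\item Step~3 of Proposition~\ref{prop-ref} works only because $\mathcal{G}$ is locally free there, so one can expand in a local frame. Near $P\setminus P_0$ the sheaf $\mathcal{E}$ is not locally free, or $X$ is singular, so there is no frame to expand in.
\item More seriously, the estimate goes the wrong way. That the sections $\tilde s_j=\mathrm{ev}(s_j)$ of $m(L+\Lambda)$ generate only gives $|\tilde s_j|_{g_m^m}\le |s_j|_{h_m}$, which is a \emph{lower} bound on the weight. Extension needs an \emph{upper} bound, namely $|\mathrm{ev}_y(v)|\le C|v|_{h_m}$ for $y$ near $P\setminus P_0$.
\item Since $h_m$ is bounded below only through sections of $(\Sym^m\mathcal{E})^*$, such a bound would require the tautological frame of $-m(L+\Lambda)$ to be dominated by pullbacks of dual sections. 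This is not available in general.
\item If you force it by twisting with $k\Lambda$, you prove the statement only for $L+k\Lambda$, which is weaker than the theorem about the given $\Lambda$. Moreover $k$ a priori depends on $m$, so after taking $m$-th roots you would not even get a fixed multiple of $\Lambda$.
\end{itemize}

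\textbf{How the paper closes it.} The paper reverses your order of operations: strict positivity is produced \emph{before} extending, on $P_0$, because the extension argument needs a K\"ahler current.

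\begin{itemize}
\item It builds the metric $h=(\pi^*g_1)\,g_2^{-\delta}\,g_{[\delta E]}$ on $L$, smooth on $P_0$, with $\sqrt{-1}\Theta_h+Cp^*\omega_X\ge a\,\omega_P$, and sets $h_\e=g_{\e/2}^{1-\delta}h^{\delta}$.
\item For such a metric, Ohsawa--Takegoshi together with H\"ormander's estimates on the weakly pseudoconvex manifold $P\setminus p^{-1}(H)$ give Demailly's comparison $\varphi\le b_m+C/m$. Here $b_m$ is the Bergman weight of $L^2$ sections of $m(L+\Lambda)$ over $P_0$.
\item The weight $b_m$ is bounded above near $P\setminus P_0$ because every section over $P_0$ extends to $P$. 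This is exactly where reflexivity of $p_*\mathcal{O}_P(m(L+\Lambda))=\Sym^{[m]}\mathcal{E}$ enters.
\item By the open mapping theorem, restriction $H^0(P,\cdot)\to H^0(P_0,\cdot)$ is a homeomorphism of Fr\'echet spaces. Hence the $L^2$ unit ball is uniformly bounded on compact sets meeting $P\setminus P_0$.
\end{itemize}

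No comparison of generators and no enlargement of $\Lambda$ is needed.

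\textbf{Two smaller points.}
\begin{itemize}
\item Once the weight is bounded above, the curvature inequality passes to $P$ because quasi-psh functions bounded above extend across analytic sets. It is not because $[\Lambda]\ge 0$.
\item In your ``if'' direction, ``dividing by $m$'' is not an operation on metrics on $\Sym^m\mathcal{E}$. You need $m\delta$ large enough to absorb the fixed metric on $-K_{P/X}$ while keeping $m\e\le 1$. Hypothesis~(1) does not give this, since it does not give $\delta_\e\gtrsim \e$. You can arrange it by mixing with the fixed K\"ahler-current metric with exponent of order $1/m$. That leaves an $m$-independent constant, which must then be absorbed into $\omega_X$. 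The paper itself only sketches this direction.
\end{itemize}
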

By Remark \ref{rem-psef}, 
the line bundle $L$ does not admit the desired metrics $\{h_{\e}\}_{\e>0}$ without adding $\Lambda$.
The ``if'' part follows from the positivity of direct image sheaves  
and a similar argument is given in the proof of Theorem \ref{thm-stable}. 
Therefore, we focus on proving the ``only if'' part, which depends on the following two lemmas.

\begin{lemma}\label{lemm-metric}
In the notation of Setup \ref{setup}, assume that $\mathcal{E}$ is pseudo-effective. 
Then, the line bundle $L|_{P_{0}}$ admits 
singular Hermitian metrics $\{h_{\e}\}_{\e>0}$ 
satisfying the same conditions in Theorem \ref{thm-psef}, 
with $P$ and $X$ replaced by $P_{0}$ and $X_{0}$, respectively. 
\end{lemma}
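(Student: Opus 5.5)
The plan is to build the metrics on $L|_{P_0}=\mathcal{O}_{\mathbb{P}(\mathcal{E}|_{X_0})}(1)$ from the metrics $h_m$ on $\Sym^m\mathcal{E}|_{X_0}$ given by Definition \ref{def-psef}. Then we correct them by a small strictly positive term so that the curvature bound of condition $(1)$ holds with some $\delta_\e>0$.

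\textbf{Step 1: induced metrics on $mL$.} Fix $\e>0$ and choose $m$ with $1/m<\e/2$. On $P_0$ the natural surjection $p^*\Sym^m\mathcal{E}\to \mathcal{O}_{P_0}(mL)$ induces, via Proposition \ref{genersurjet}, a quotient metric $\tilde h_m$ on $mL$ with
$$
\sqrt{-1}\Theta_{\tilde h_m}(mL)\geq -p^*\omega_X .
$$
Concretely, the dual metric is given by $|\xi^m|_{h_m^*}$, so the weight is $\log$ of a norm of a holomorphic section $\xi^m$ of $p^*\Sym^m\mathcal{E}^*$. Taking the $m$-th root gives a metric $\tilde h_m^{1/m}$ on $L$ with curvature $\geq -\tfrac1m p^*\omega_X\geq -\tfrac{\e}{2}p^*\omega_X$.

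\textbf{Step 2: strict positivity.} Since $p\colon P_0\to X_0$ is a projective bundle, we fix a smooth Hermitian metric $G$ on $\mathcal{E}|_{X_0}$ and let $g$ be the induced metric on $L|_{P_0}$. Its curvature is positive along the fibres, and $\sqrt{-1}\Theta_g+Cp^*\omega_X$ is positive locally on $X_0$. Because $X_0$ is noncompact, no uniform constant $C$ is available. To handle this, I would twist $G$ by $e^{-\psi}$, where $\psi$ is a quasi-psh exhaustion-type weight on $X$ with poles along $X\setminus X_0$, so that $g$ has curvature bounded below by $c\,\omega_P - Cp^*\omega_X$ on $P_0$. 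Equivalently, one may only require the bound against a complete Kähler form on $P_0$ with controlled behaviour near $X\setminus X_0$.

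We then set
$$
h_\e:=\tilde h_m^{(1-\eta)/m}\cdot g^{\eta}, \qquad \eta>0 \text{ small, } \eta C<\e/2 .
$$
Its curvature is at least $-\e p^*\omega_X+\eta c\,\omega_P$, which gives condition $(1)$ with $\delta_\e=\eta c$.

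\textbf{Step 3: Lelong numbers.} Since $\psi$ is smooth on $X_0$ and $g$ is smooth there, Lelong numbers of $h_\e$ at points of $P_0$ come only from $\tilde h_m^{1/m}$. At a point $x\in P_0$ over $y\in X_0$, the weight of $\tilde h_m$ is controlled by $\log|\xi^m|_{h_m^*}$. Its Lelong number vanishes whenever $h_m^*$ is bounded near $y$ in the sense that $\log|e|_{h_m^*}>-\infty$ for a local frame at $y$. The locus where $h_m$ is singular, meaning that the quasi-psh functions $\log|e|_{h_m^*}$ take the value $-\infty$ or have positive Lelong number, is a pluripolar subset of $X_0$. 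By Siu's theorem, the set of points with positive Lelong number is a countable union of analytic sets. Over $y$ outside the Lelong-number-positive locus of the finitely many functions $\log|e_I|_{h_m^*}$, with $e_I$ running over local frames, the fibre contains no points of positive Lelong number. Hence the set in condition $(2)$ is contained in $p^{-1}$ of a proper countable union of analytic subsets, and so does not dominate $X_0$.

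\textbf{Main obstacle.} I expect the main obstacle to be Step 2, namely getting a \emph{uniform} strictly positive lower bound $\delta_\e\omega_P$ on the noncompact $P_0$ without creating new Lelong numbers over $X_0$ or spoiling the $\e$-bound. The weight $\psi$ must be quasi-psh on $X$, with small curvature loss in the direction $p^*\omega_X$. I would first try $\psi=\log\sum|f_j|^2$ for local generators of the ideal of $X\setminus X_0$, glued by a partition of unity and scaled by $\eta$. The twist should make $g$ effectively dominate $\omega_P|_{P_0}$ for a Kähler form $\omega_P$ on the compactification $P$.
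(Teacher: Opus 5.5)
Your Step~1 and the final interpolation $h_\varepsilon=\tilde h_m^{(1-\eta)/m}\cdot g^{\eta}$ with $\eta C<\varepsilon/2$ match the paper exactly (there $h_\varepsilon:=g_{\varepsilon/2}^{1-\delta}\cdot h^{\delta}$ with $\delta=\varepsilon/2C$). The gap is Step~2, which is the actual content of the lemma. You need a single metric $g$ on $L|_{P_0}$, smooth on $P_0$, with $\sqrt{-1}\Theta_g+C\,p^*\omega_X\ge a\,\omega_P$ on $P_0$, where $\omega_P$ is the fixed K\"ahler form on the \emph{compact} manifold $P$. A bound against some complete K\"ahler form on $P_0$ is not ``equivalent'': it is the $\omega_P$-bound that is fed into Theorem~\ref{thm-extension}.

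Your construction does not give this bound. An arbitrary smooth Hermitian metric $G$ on $\mathcal{E}|_{X_0}$ has uncontrolled curvature near $X\setminus X_0$. The twist $e^{-p^*\psi}$ only adds $p^*\sqrt{-1}\partial\bar\partial\psi$, which is pulled back from $X$ and therefore vanishes on $p$-vertical vectors. So on vertical vectors near $P\setminus P_0$ all positivity must come from the fibrewise Fubini--Study forms of $G(y)$. But $\omega_P$ can blow up in vertical directions there, e.g.\ when $\pi$ blows up points of $\mathbb{P}(\mathcal{E})$ lying over $X\setminus X_0$. You give no mechanism for choosing $G$ to match this, and no choice of $\psi$ on $X$ can supply the vertical part of $a\,\omega_P$.

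The missing idea is to build the reference metric on the compact spaces $\mathbb{P}(\mathcal{E})$ and $P$ rather than on $X_0$.
\begin{itemize}
\item The hyperplane bundle of $\mathbf{Proj}(\bigoplus_m\Sym^m\mathcal{E})$ is relatively ample over $X$, and the normalization is finite. Hence $\mathcal{O}_{\mathbb{P}(\mathcal{E})}(1)$ is $\pi_{\mathcal{E}}$-ample and carries a smooth metric $g_1$ on all of $\mathbb{P}(\mathcal{E})$ with $\sqrt{-1}\Theta_{g_1}+C\,\pi_{\mathcal{E}}^*\omega_X>0$. Compactness makes $C$ uniform.
\item The pullback of $g_1$ to $P$ is positive on $P_0$ but not comparable to $\omega_P$ near the $\pi$-exceptional locus. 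The paper repairs this on $P$: take an effective $\pi$-exceptional divisor $E$ with $-E$ $\pi$-ample, a smooth metric $g_2$ on $\mathcal{O}_P(E)$, and set $h:=(\pi^*g_1)\cdot g_2^{-\delta}\cdot g_{[\delta E]}$ on $L=(L-\delta E)+\delta E$.
\item For $0<\delta\ll 1$ this gives $\sqrt{-1}\Theta_h+C\,p^*\omega_X\ge a\,\omega_P$ on all of $P$. Since $\pi$ is an isomorphism over $X_0$, $E$ is disjoint from $P_0$, so $h$ is smooth there.
\end{itemize}
Thus the positivity with poles has to come from data on $\mathbb{P}(\mathcal{E})$ and $P$ (relative ampleness and the term $\delta\log|s_E|^2_{g_2}$), not from a function pulled back from $X$.

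A smaller point on Step~3: your criterion is false as stated. Zero Lelong numbers of $\log|e_I|_{h_m^*}$ at $y$, for a fixed frame, do not exclude positive Lelong numbers on $p^{-1}(y)$. For example, take rank $2$, $m=1$, and $|a e_1^*+b e_2^*|^2_{h^*}=|a-b|^2+|z|^2|a+b|^2$. Both $\log|e_i^*|_{h^*}$ are smooth at $z=0$, yet the weight $\log(|a-b|^2+|z|^2|a+b|^2)$ has positive Lelong number at $(0,[1:1])$.

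The conclusion still holds. By Siu's theorem each $\{\nu\ge c\}$ is analytic, and its image under the proper map $p|_{P_0}$ is analytic. Over any $y$ where $h_m(y)$ is a finite nondegenerate inner product (almost every $y$), the weight is smooth on $p^{-1}(y)$. Since Lelong numbers can only increase under restriction, that image is a proper subset of $X_0$.
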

\begin{proof}
We first show that there exists a singular metric $h$ on $L$
such that $h$ is smooth on $P_0$ and satisfies
\[
\sqrt{-1}\Theta_h + C\, p^{*} \omega_X \geq a\, \omega_P
\]
for some constants $C \gg 1$ and $1\gg a >0$.
The hyperplane bundle associated with ${\bf{Proj}}(\bigoplus_{m=0}^{\infty} \mathrm{S}^m \mathcal{E})$
is relatively ample with respect to ${\bf{Proj}}(\bigoplus_{m=0}^{\infty} \mathrm{S}^m \mathcal{E}) \to X$, 
and the structure sheaf $\mathcal{O}_{\mathbb{P}(\mathcal{E})}$
is relatively ample with respect to the normalization $\mathbb{P}(\mathcal{E}) \to {\bf{Proj}}(\bigoplus_{m=0}^{\infty} \mathrm{S}^m \mathcal{E})$.
This shows that the line bundle $\mathcal{O}_{\mathbb{P}(\mathcal{E})}(1)$ is relatively $\pi_{\mathcal{E}}$-ample.
Hence, there exists a smooth metric $g_1$ on $\mathcal{O}_{\mathbb{P}(\mathcal{E})}(1)$ such that
$
\sqrt{-1}\Theta_{g_1} + C\, \pi_{\mathcal{E}}^{*} \omega_X > 0 
$
for some constant $C > 0$.
Let $E$ be an effective $\pi$-exceptional divisor such that $-E$ is relatively $\pi$-ample.
Then, the line bundle $\mathcal{O}_{P}(E)$ admits a smooth metric $g_{2}$ such that  
\begin{align*}
&\sqrt{-1}\Theta_{\pi^{*}g_{1}} + C p^{*} \omega_{X} - \delta \sqrt{-1}\Theta_{g_{2}}(E) \\
= &\pi^{*}\big( \sqrt{-1}\Theta_{g_{1}} + C \pi_{\mathcal{E}}^{*} \omega_{X} \big) - \delta \sqrt{-1}\Theta_{g_{2}}(E) >0
\end{align*}
for $1 \gg \delta > 0$.
We define a singular metric $h$ on $L = (L - \delta E) + \delta E$ by  
\[
h := (\pi^{*}g_{1}) \cdot g_{2}^{-\delta} \cdot g_{[\delta E]},
\]
where $g_{[\delta E]}$ denotes the singular metric associated with the effective divisor $\delta E$.  
Then $h$ satisfies the required properties.

Next, we construct singular metrics $\{g_{\e}\}_{\e > 0}$ on $L|_{P_{0}}$ satisfying the following properties: 
\begin{itemize}
\item[$\bullet$] $\sqrt{-1}\Theta_{g_{\e}} + \e p^{*}\omega_{X} \geq 0$ on $P_{0}$;
\item[$\bullet$] $\{x \in P_{0} \mid \nu(g_{\e}, x) > 0\}$ does not dominate $X_{0}$.
\end{itemize}
By the definition of pseudo-effective sheaves,  
there exist singular Hermitian metrics $\{h_{m}\}_{m \in \mathbb{Z}_{+}}$ on ${\rm{S}}^{[m]} \mathcal{E}$  
such that $\sqrt{-1}\Theta_{h_{m}} \geq -\omega_{X} \otimes \mathrm{id}$.  
Note that $p \colon P_{0} \to X_{0}$ is the projective space bundle
$\bb{P}(\mathcal{E}|_{X_{0}}) \to X_{0}$ of the locally free sheaf $\mathcal{E}|_{X_{0}}$
Hence, the pullbacks $p^{*} h_{m}$ define singular Hermitian metrics on $p^{*} \Sym^{m} \mathcal{E}|_{P_{0}}$  
such that $\sqrt{-1}\Theta_{p^{*} h_{m}} \geq -p^{*} \omega_{X} \otimes \mathrm{id}$ on $P_{0}$.
Let $H_{m}$ be the singular Hermitian metric on $mL|_{P_{0}}$ induced by the natural surjective morphism
\[
p^{*} \Sym^{m} \mathcal{E}|_{P_{0}} \to mL|_{P_{0}}.
\]
By construction, we have $\sqrt{-1}\Theta_{H_{m}} \geq -p^{*} \omega_{X}$,  
and $\{x \in P_{0} \mid \nu(H_{m}, x) > 0\}$ does not dominate $X_{0}$.  
Choosing $m$ such that $\e >1/m$, 
the metrics $H_{m}^{1/m}$ on $L|_{P_{0}}$ give the desired family $\{g_{\e}\}_{\e > 0}$.

Finally, we define singular metrics on $L|_{P_{0}}$ by  
\[
h_{\e} := g_{\e/2}^{1 - \delta} \cdot h^{\delta}.
\]
From the curvature properties of $h$ and $g_{\e}$, we obtain
\[
\sqrt{-1}\Theta_{h_{\e}} \geq -\left( \frac{\e}{2} + \delta C \right) p^{*} \omega_{X} + \delta a \omega_{P}.
\]
Setting $\delta := \e / 2C$, we conclude that $h_{\e}$ satisfies the required properties.
\end{proof}

\begin{remark}\label{rem-psef}
In general, a metric $h_{\e}$ on $L|_{P_{0}}$ \textit{cannot} 
be extended to a singular Hermitian metric on $L$. 
For example, let $\mathcal{E}$ be the ideal sheaf of a submanifold $Z \subset X$ of codimension $\ge 2$.  
Then $\mathcal{E}$ is pseudo-effective in the sense of Definition \ref{def-psef}. 
We may take $P$ to be the blow-up of $X$ along $Z$ with exceptional divisor $E$, so that $L = -E$.  
Then, the line bundle $L = -E$ does not admit singular Hermitian metrics $h_{\e}$ satisfying 
$\sqrt{-1}\Theta_{h_{\e}} \geq - \e \omega_{P}$ for all $\e>0$. 
Nevertheless, since we may take $\Lambda:=E$, 
the corresponding metrics on $L+\Lambda=\mathcal{O}_{P}$ can extend. 
\end{remark} 

As noted above, extending a metric $h_{\varepsilon}$ defined only on $P_{0}$ to $P$ is a delicate problem, 
since $P \setminus P_{0}$ can be divisorial.  
Nevertheless, by considering $L + \Lambda$ instead of $L$,  
we can extend $h_{\e}$ to a metric on $L + \Lambda$  
using the identification $L|_{P_{0}} \cong (L + \Lambda)|_{P_{0}}$,  
which is a key step in the proof of Theorem \ref{thm-psef}.

\begin{thm}\label{thm-extension}
Let $h$ be a singular metric on $L|_{P_{0}}$ such that 
\begin{align*}
\sqrt{-1}\Theta_{h} + C\, p^{*}\omega_{X} \geq a \omega_{P} \quad \text{on } P_{0},
\end{align*}
for some constants $C \gg 1$ and $0<a\ll 1$.
Then, when $h$ is regarded as a metric on $(L+\Lambda)|_{P_{0}}$ via the identification 
$L|_{P_{0}} \cong (L+\Lambda)|_{P_{0}}$ $($for which we use the same notation $h$$)$, 
the weight of $h$ is locally bounded above near $P\setminus P_{0}$. 
In particular, the metric $h$ extends across $P\setminus P_{0}$ to a singular metric on $L+\Lambda$ 
$($which we denote by the same symbol $h$$)$. 

More precisely, this means that the weight of 
the metric $h\cdot h_{\Lambda}$ on $(L+\Lambda)|_{P_{0}}$ is locally bounded above, 
and hence extends to a singular metric on $L+\Lambda$,
where $h_{\Lambda}$ is the singular metric on $\Lambda$  associated with the effective divisor $\Lambda$.
\end{thm}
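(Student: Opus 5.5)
The plan is to reduce the local upper bound for the weight of $h\cdot h_{\Lambda}$ near $P\setminus P_{0}$ to an upper bound for a quasi-psh function on $X$ itself, where Theorem~\ref{thm-gr} applies because $\codim(X\setminus X_{0})\geq 2$. The divisor $P\setminus P_{0}$ is the obstruction to extending directly on $P$, but it lies over the small set $X\setminus X_{0}$. The idea is therefore to push the information down along $p$, extend on $X$, and pull back.

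\emph{Step 1 (fibrewise control).} Fix a point $x_{0}\in X\setminus X_{0}$, a small Stein neighbourhood $U\ni x_{0}$ with a potential $f$ of $\omega_{X}$, and a smooth reference metric $h_{0}$ on $L+\Lambda$ over $p^{-1}(U)$. Write $\varphi$ for the weight of $h\cdot h_{\Lambda}$ relative to $h_{0}$. On $p^{-1}(U\cap X_{0})$ the function $\varphi + C p^{*}f$ is quasi-psh, and on each fibre $p^{-1}(x)\cong\mathbb{P}^{r-1}$, $x\in X_{0}$, it is a $\theta$-psh function for a fixed smooth form $\theta$, namely the curvature of $h_{0}$ restricted to the fibre. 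By compactness of normalised $\theta$-psh functions on $\mathbb{P}^{r-1}$, $\sup_{p^{-1}(x)}\varphi$ and the fibre average of $\varphi$ differ by a uniform constant. This holds for all $x$ in a neighbourhood, since the fibres vary in a smooth family over $X_{0}$ and $h_{0}$ is continuous on $p^{-1}(U)$.

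\emph{Step 2 (pushing down).} Set $\Phi(x):=\sup_{p^{-1}(x)}\varphi$ for $x\in U\cap X_{0}$. Equivalently, up to a bounded error from Step~1, take the logarithm of the $L^{2}$- or $L^{\infty}$-Bergman-type quantity built from $h^{m}$ on sections of $m(L+\Lambda)$ along fibres. Here I use that $p_{*}\mathcal{O}_{P}(m(L+\Lambda))=\Sym^{[m]}\mathcal{E}$ is reflexive, so local frames of $\Sym^{m}\mathcal{E}|_{X_{0}}$ extend over $U$. Take local generators $s_{1},\dots,s_{N}$ of $\Sym^{[m]}\mathcal{E}$ on $U$, viewed as sections of $m(L+\Lambda)$ on $p^{-1}(U)$. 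Then
$$
\Phi_{m}(x):=\frac1m\log\sup_{p^{-1}(x)}\Bigl(\sum_{i}|s_{i}|^{2}\Bigr)e^{-m\varphi}
$$
is comparable to $-\Phi$ and to the weight of the induced quotient metric. By the positivity of direct images in the spirit of \cite{PT18, HPS18}, the metric that $h^{m}$ induces on $p_{*}$ has curvature bounded below by $-Cm\,\omega_{X}$. Hence $\log$ of the dual norm of a fixed local section of $(\Sym^{[m]}\mathcal{E})^{*}$ is quasi-psh on $U\cap X_{0}$. By Theorem~\ref{thm-gr} these functions extend over $U$, so they are locally bounded above near $x_{0}$. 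This gives a lower bound for the norms of the generators $s_{i}$, i.e.\ an upper bound for $\sup_{p^{-1}(x)}\varphi$ that is uniform for $x\in U'\cap X_{0}$, with $U'\Subset U$.

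\emph{Step 3 (back to $P$).} Step~2 bounds $\varphi$ from above on $p^{-1}(U'\cap X_{0})=p^{-1}(U')\setminus (P\setminus P_{0})$. Since $\varphi+Cp^{*}f$ is quasi-psh and bounded above off a proper analytic subset of the manifold $p^{-1}(U')$, the Riemann-type extension theorem for psh functions gives a psh extension across $P\setminus P_{0}$. This defines the singular metric on $L+\Lambda$ and satisfies the curvature inequality on all of $P$.

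The main obstacle is Step~2: converting the upper bound for the quasi-psh functions on $X$ into a genuine lower bound on the $h^{m}$-norms of the specific sections $s_{i}$ that generate $m(L+\Lambda)$ near $P\setminus P_{0}$. This is exactly where $\Lambda$ enters, since generators of $L$ alone do not extend (Remark~\ref{rem-psef}). I would first try a single fixed $m$ for which $m(L+\Lambda)$ is $p$-globally generated near $x_{0}$ (possible because $L+\Lambda$ is $p$-big and $\pi$-relatively ample up to exceptional corrections). I would then use the strict positivity term $a\,\omega_{P}$ to obtain the direct-image curvature bound via the Ohsawa--Takegoshi-type $L^{2}$ extension, uniformly up to the boundary.
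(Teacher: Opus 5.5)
There is a genuine gap, and it sits in your Step~2, which is where all the content of the statement lies. Pushing down to $X$ and applying Theorem~\ref{thm-gr} only produces \emph{upper} bounds for the dual norms $|\xi|_{H^*}$ of local sections $\xi$ of $\mathcal{F}^*$, where $\mathcal{F}:=\Sym^{[m]}\mathcal{E}$. To turn these into an upper bound for $\varphi$ near $P\setminus P_0$ via $|\xi(s_i)|\le|\xi|_{H^*}\,\|s_i\|_H$, you need uniform lower bounds for $|\xi(s_i)|$ and for $\sum_i|s_i|^2_{h_0^m}$ as $x\to x_0$. These fail in general.

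Suppose $\mathcal{F}$ is not free at $x_0$ and has no free direct summand there. This already happens for $m=1$ and a rank-two reflexive sheaf on a smooth threefold at a non-locally-free point. Then $\xi(s)(x_0)\neq 0$ would split off $\mathcal{O}_X s$, so the image of the pairing $\mathcal{F}^*\otimes\mathcal{F}\to\mathcal{O}_X$ lies in $\mathfrak{m}_{x_0}$. Every lower bound of this kind therefore tends to $0$ exactly at the points you need to control.

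Independently, $p$-bigness gives no relative global generation of $m(L+\Lambda)$ along $P\setminus P_0$. For instance, enlarging $\Lambda$ by further exceptional components does not change $p_*\mathcal{O}_P(m(L+\Lambda))$, so all sections then vanish along the added components.

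There are further problems:
\begin{itemize}
\item The $\sup$ in $\Phi_m$ should be an $\inf$; as written it is $+\infty$ wherever $\varphi=-\infty$.
\item A lower bound on the fibre integrals $\int_{p^{-1}(x)}|s|^2e^{-m\varphi}$ does not bound $\sup_{p^{-1}(x)}\varphi$ from above without a uniform integrability estimate.
\item The uniformity claimed in Step~1 is unjustified, since the fibres over $X_0$ degenerate as $x\to x_0$.
\end{itemize}

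The missing idea is to use the reflexivity of $p_*\mathcal{O}_P(m(L+\Lambda))$ to bound sections from \emph{above} near $P\setminus P_0$, which is the direction you actually need. The paper stays on $P$:
\begin{itemize}
\item It localizes, so that $X$ is Stein and $\sqrt{-1}\Theta_h$ is a K\"ahler current.
\item The Ohsawa--Takegoshi theorem and H\"ormander's estimates on the weakly pseudoconvex K\"ahler manifold $P\setminus p^{-1}(H)$ give $\varphi\le b_m+C/m$ on $P_0$. Here $b_m$ is the Bergman kernel weight of $H^0_{L^2}(P_0,m(L+\Lambda))_{h^m}$.
\item By reflexivity, restriction $H^0(P,m(L+\Lambda))\to H^0(P_0,m(L+\Lambda))$ is a continuous bijection of Fr\'echet spaces, hence a homeomorphism by the open mapping theorem.
\item The $L^2$ unit ball is bounded in $H^0(P_0,\cdot)$ because $\varphi$ is locally bounded above on $P_0$. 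It is therefore bounded in $H^0(P,\cdot)$, so its elements are uniformly bounded near $P\setminus P_0$.
\item This bounds $b_m$, and hence $\varphi$, from above near $P\setminus P_0$.
\end{itemize}
No relative global generation and no non-degeneracy of pairings is needed. If you want to keep the push-down to $X$, you would have to replace your pointwise pairing estimate by a functional-analytic argument of the same type. One option is the closed embedding $\mathcal{F}\hookrightarrow\mathcal{O}_U^{N}$ given by generators of $\mathcal{F}^*$, combined with the open mapping theorem. You would also still need the Bergman inequality on $P$, and at that point the detour through $X$ and the direct-image positivity become superfluous.
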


As a direct consequence, 
the metric $h_{\e}$ constructed in Lemma \ref{lemm-metric} 
extends to a singular metric on $L+\Lambda$ satisfying the desired properties as in Theorem \ref{thm-psef}, 
completing the proof of Theorem \ref{thm-psef}. 

\begin{proof}
We regard the metric $h$ on $L|_{P_{0}}$ as a metric on $(L+\Lambda)|_{P_{0}}$ via the identification $L|_{P_{0}} \cong (L+\Lambda)|_{P_{0}}$.  
It suffices to show that the local weight $\varphi$ of $h$ is bounded from above in a neighborhood of $P\setminus P_{0}$.  
The problem is local on $X$, so we may assume that $X$ is Stein.  
Then, after replacing $h$ by $h\,e^{-p^{*}\psi}$, where $\psi$ is a local potential of $C \omega_{X}$, 
we may assume that $\sqrt{-1}\Theta_{h}$ is a K\"ahler current by the curvature assumption.  

For $m\in\mathbb{Z}_{>0}$, we consider the $L^{2}$-space  
\[
H^{0}_{L^{2}}(P_{0},m(L+\Lambda))_{h^{m},\omega_{P}}
\]
of holomorphic sections $\sigma$ on $P_{0}$ with finite $L^{2}$-norm $\|\sigma\|_{h^{m},\omega_{P}}< \infty$.  
Let $B_{m}$ denote the Bergman kernel metric of this Hilbert space,  
whose local weight $b_{m}$ is defined by 
\[
b_{m}(p)
          :=\frac{1}{m}\sup\Bigl\{\log|\sigma|(p) \;\Big|\;\|\sigma\|_{h^{m},\omega_{P}}\le 1\Bigr\} \quad \text{ for } p\in P_{0}. 
\]
Then, for every relatively compact set $K\Subset X$, there exists a constant $C=C(K)$, independent of $m$, such that
\begin{equation}\label{eq-dem}
\varphi(p)\le b_{m}(p)+\frac{C}{m}\qquad\text{for }p\in P_{0}\cap p^{-1}(K).
\end{equation}
This can be verified by the standard $L^{2}$-methods for $\dbar$-equations 
(see, for example, \cite[(13.21) Theorem, p.~138]{Dem10}). 
We briefly explain the proof for the reader's convenience.

Fix a point $p_{0} \in P_{0}\cap p^{-1}(K)$. 
Take an open ball $B \Subset P$ centered at $p_{0}$ and 
a hypersurface $H\subset X$ such that $X\setminus X_{0} \subset H$ and $p_{0} \not \in p^{-1}(H)$.  
Note that $P\setminus p^{-1}(H)$ is a weakly pseudoconvex K\"ahler manifold and that $B\setminus p^{-1}(H)$ is Stein. 
This allows us to solve $\dbar$-equations on $P\setminus p^{-1}(H)$ and 
to apply the $L^{2}$-extension theorem on $B\setminus  p^{-1}(H)$.
If $\varphi(p_{0})=-\infty$, the inequality is obvious, so we may assume $\varphi(p_{0})>-\infty$.  
For a given $\alpha\in\mathbb{C}$, 
by the $L^{2}$-extension theorem, there exists a holomorphic function $f$ on $B\setminus p^{-1}(H)$ such that
\[
f(p_{0})=\alpha \quad \text{ and } \quad 
\int_{B\setminus p^{-1}(H)}|f|^{2}e^{-m\varphi}\,dV_{\omega_{P}}\le C_{1}|\alpha|^{2}e^{-m\varphi(p_{0})}.
\]
Fix a relatively compact subset $K_{P}\Subset B$ and consider a cut-off function $\theta$ supported in $B$ and equal to~$1$ on $K_{P}$.  
Since $\sqrt{-1}\Theta_{h}$ is a K\"ahler current, 
the standard $L^{2}$-methods for $\dbar$-equations enable us to solve
$\overline{\partial}g=\overline{\partial}(\theta f)$ on $P\setminus p^{-1}(H)$ with the $L^{2}$-estimate
\[
\int_{P\setminus p^{-1}(H)}|g|_{h^{m}}^{2}\,e^{-2n\theta\log|z-z(p_{0})|}\,dV_{\omega_{P}}
\le C_{2}\int_{B\setminus p^{-1}(H)}|f|^{2}e^{-m\varphi}\,dV_{\omega_{P}}, 
\]
where $z$ is a coordinate on $B$ and $n:=\dim P$. 
Then, the section $\sigma:=\theta f -g$ satisfies
\begin{align}\label{eq-norm}
\sigma(p_{0})= f(p_{0})=\alpha \quad \text{ and } \quad  
\|\sigma\|_{h^{m}}\le C_{3}|\alpha|^{2}e^{-m\varphi(p_{0})}. 
\end{align}
Here, on the left-hand side, we identify $\sigma$ with a holomorphic function via a fixed local frame.
The section $\sigma$ is defined on $P\setminus p^{-1}(H)$. 
On the other hand, since $h$ is defined even near every point of $p^{-1}(H)\cap P_{0}$, 
every $L^{2}$-section on $P\setminus p^{-1}(H)$ extends to $P_{0}$, giving the isomorphism
\[
H^{0}_{L^{2}}(P_{0},m(L+\Lambda))_{h^{m},\omega_{P}}
\cong H^{0}_{L^{2}}(P\setminus p^{-1}(H),m(L+\Lambda))_{h^{m},\omega_{P}}.
\]
This shows that $\sigma$ can be regarded as a section on $P_{0}$. 
We can choose $\alpha$ so that the right-hand side in \eqref{eq-norm} equals $1$, and thus we obtain inequality \eqref{eq-dem}.

\smallskip 
Henceforth, to simplify notation, we assume $m = 1$ and set $L_{\Lambda} := L + \Lambda$.
We will show that the weight $b:=b_{1}$ of the Bergman kernel is locally bounded from above near $P \setminus P_{0}$.  
The crucial point is that every section of $L_{\Lambda}$ on $P_{0}$ extends holomorphically to $P$, 
since $p_{*}(L_{\Lambda})$ is reflexive by the choice of $\Lambda$. 

Fix a trivialization of $L_{\Lambda}$ on an open ball $B \subset P$.  
Then, for each point $p \in B_{0}:=B \cap P_{0}$, we consider the evaluation map
\[
\mathrm{ev}_{p} \colon 
H^{0}_{L^{2}}(P_{0},L_{\Lambda})_{h,\omega_{P}}
\longrightarrow
H^{0}(B_{0},L_{\Lambda}) \cong H^{0}(B_{0},\mathcal{O}_{B})
\longrightarrow
\mathbb{C}. 
\]
By taking duals and the operator norm, 
we can interpret the weight $b$ as the composition 
\[
b=b_{1}\colon\,
B_{0}\xrightarrow{\ \mathrm{ev}^{*}\ }
\bigl(H^{0}_{L^{2}}(P_{0},L_{\Lambda})_{h,\omega_{P}}\bigr)^{*}
\xrightarrow{\ \|\cdot\|_{\mathrm{op}}\ }
\mathbb{R}, 
\]
where the dual space is equipped with the strong topology.
By Montel's theorem, the map $\mathrm{ev}^{*}$ is continuous.  
Hence, it is enough to extend $\mathrm{ev}^{*}$ continuously to $B$; 
once this is done, the weight function $b=b_{1}$ is bounded on compact subsets of $B$.

The restriction map
\[
H^{0}(P,L_{\Lambda}) \longrightarrow H^{0}(P_{0},L_{\Lambda})
\]
is an isomorphism of Fr\'echet spaces with the topology induced by local sup-norms. 
Indeed, this map is clearly continuous, injective, and linear. 
Moreover, it is surjective, since we have
\[
H^{0}(P,L_{\Lambda})
\cong H^{0}(X,p_{*}(L_{\Lambda}))
\cong H^{0}(X_{0},p_{*}(L_{\Lambda}))
\cong H^{0}(P_{0},L_{\Lambda})
\]
by reflexivity. 
Hence, the map is a homeomorphism by the open mapping theorem.
This induces a continuous map
\[
H^{0}(P,L_{\Lambda})^{*}
\cong H^{0}(P_{0},L_{\Lambda})^{*}
\to
\bigl(H^{0}_{L^{2}}(P_{0},L_{\Lambda})_{h,\omega_{P}}\bigr)^{*}. 
\]
On the other hand, there exists a continuous map
\[
B \to H^{0}(B, L_{\Lambda})^{*} \to H^{0}(P,L_{\Lambda})^{*}.
\]
The composition of these maps extends $\mathrm{ev}^{*}$ continuously to $B$, 
proving the desired local boundedness.
\end{proof}

The following theorem is an almost direct consequence of the above arguments.  
For completeness, we briefly sketch the proof.

\begin{thm}\label{thm-ext}
Let $G$ be a singular Hermitian metric on $\Sym^{m}\mathcal{E}$
such that 
$$\sqrt{-1}\Theta_{G} \geq \theta \otimes \id, $$
where $\theta$ is a smooth $(1,1)$-form on $X$ which locally admits potentials and
$m \in \mathbb{Z}_{+}$ is a fixed integer.
Consider the singular metric $g$ on $mL|_{P_{0}}$
induced by $G$ and the natural surjective morphism
\[
p^{*}\Sym^{m}\mathcal{E}|_{P_{0}} \to mL|_{P_{0}}.
\]
Then, after replacing $\Lambda$ by a sufficiently large multiple, 
the metric $g$ extends to a singular metric on $m(L+\Lambda)$ 
$($which we denote by the same symbol $g$$)$
via the identification $L|_{P_{0}} \cong (L+\Lambda)|_{P_{0}}$, 
and the extension satisfies $\sqrt{-1}\Theta_{g} \geq p^{*}\theta$.
\end{thm}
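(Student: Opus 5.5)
The plan is to mimic the proof of Theorem~\ref{thm-extension}, replacing the Bergman kernel construction by the explicit description of $g$ through $G$. The problem is local on $X$, so we work over a small Stein open set $U\subset X$ on which $\theta=\sqrt{-1}\ddbar f$ for a smooth function $f$. Replacing $G$ by $Ge^{p^{*}f}$ (i.e.\ twisting by the potential), we reduce to the case $\theta=0$. In this case $\log|e|_{G^{*}}$ is psh on $U\cap X_{0}$ for every local section $e$ of $(\Sym^{m}\mathcal{E})^{*}$, and we must show that the weight of $g$ on $m(L+\Lambda)$ is psh and locally bounded above near $P\setminus P_{0}$. The curvature statement $\sqrt{-1}\Theta_{g}\geq 0$ on $P_{0}$ is immediate, since $g$ is a quotient metric of $p^{*}G$ under the surjection $p^{*}\Sym^{m}\mathcal{E}|_{P_{0}}\to mL|_{P_{0}}$ (Proposition~\ref{genersurjet}). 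Once the local upper bound is established, the weight extends across the analytic set $P\setminus P_{0}$ as a psh function, so both the extension and the inequality $\sqrt{-1}\Theta_{g}\geq p^{*}\theta$ follow.

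For the upper bound, I would describe the weight of $g$ dually. A point $x\in P_{0}$ over $y\in X_{0}$ corresponds to a rank-one quotient $\mathcal{E}_{y}\to\mathbb{C}$, and a local frame of $mL$ evaluated at $x$ is the image of some $s\in\Sym^{m}\mathcal{E}_{y}$. The quotient metric then gives
\[
-\log|\text{frame}|_{g}^{2}(x)=\sup\Bigl\{\log|\langle e,\cdot\rangle|^{2}\Bigr\},
\]
where the supremum runs over local sections $e$ of $(\Sym^{m}\mathcal{E})^{*}$ with $|e|_{G^{*}}\le 1$. Equivalently, the weight equals $\sup_{e}\,(\log|\widehat{e}|^{2}-\log|e|^{2}_{G^{*}})$, where $\widehat{e}$ is the section of $mL$ induced by $e$ (the tautological evaluation).

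Because $U$ is Stein, $(\Sym^{m}\mathcal{E})^{*}$ is generated by finitely many sections $e_{1},\dots,e_{N}$. Each $e_{j}$ defines a holomorphic section $\widehat{e}_{j}$ of $m(L+\Lambda')$ over $p^{-1}(U)$ for a suitable effective $p$-exceptional $\Lambda'$. We can choose $\Lambda'\le k\Lambda$ for $k\gg 0$, since $\Lambda$ has support equal to $P\setminus P_{0}$ and the sections are holomorphic on $P_{0}$ and meromorphic along the exceptional locus. This is where we replace $\Lambda$ by a large multiple.

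The weight of $g$ is then comparable to $\log\sum_{j}|\widehat{e}_{j}|^{2}-\max_{j}\log|e_{j}|^{2}_{G^{*}}$ up to bounded terms, and the dual metric satisfies $G^{*}\geq$ (a local constant) on a bounded frame. The main obstacle is that $\log|e_{j}|_{G^{*}}$ is only bounded above, not below. We therefore write the weight of $g$ as $\log|\widehat{s}|^{2}_{G}$ for the tautological vector, and use the fact that $\log|s|_{G}$ is plurisuperharmonic in the dual sense, bounding it above via $|s|_{G}\le\sum_{j}|\langle e_{j},s\rangle|\,/\,\cdots$.

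If this direct bound fails, I would instead run the Bergman kernel argument of Theorem~\ref{thm-extension} for the metric $g$ perturbed by $h^{\delta}$, with $h$ as in Lemma~\ref{lemm-metric}, so that the curvature becomes a Kähler current. That theorem gives local upper boundedness, and letting $\delta\to 0$ by monotonicity recovers $g$.
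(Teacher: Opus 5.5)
Neither of your two routes closes as written. In the direct argument the variance is reversed. With the paper's convention, $mL$ is a \emph{quotient} of $p^{*}\Sym^{m}\mathcal{E}$, so sections $e_j$ of $(\Sym^{m}\mathcal{E})^{*}$ do not induce sections $\widehat{e}_j$ of $m(L+\Lambda')$. Your comparison $\log\sum_j|\widehat{e}_j|^2-\max_j\log|e_j|^2_{G^*}$ would need a \emph{lower} bound on $|e_j|_{G^*}$. The claim that $G^{*}$ is bounded below by a local constant is false for singular metrics: the functions $\log|e|_{G^*}$ are psh, hence only bounded above, and may equal $-\infty$. Your substitute bound $|s|_G\le\sum_j|\langle e_j,s\rangle|/\cdots$ also goes the wrong way, since Cauchy--Schwarz only gives $|s|_G\ge|\langle e,s\rangle|/|e|_{G^*}$.

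The idea can be repaired, and it would then give a more elementary proof than the paper's. For a local frame $u$ of $mL$, the weight of $g$ is $\log|\iota(u^{*})|^{2}_{p^{*}G^{*}}$, where $\iota\colon\mathcal{O}_P(-mL)\to(p^{*}\Sym^{m}\mathcal{E})^{*}$ is dual to the tautological surjection. Viewed on $m(L+k\Lambda)$, this weight becomes $\log|\lambda^{mk}\iota(u^{*})|^{2}_{p^{*}G^{*}}$, with $\lambda$ a local equation of $\Lambda$. The cokernel of $p^{*}\bigl((\Sym^{m}\mathcal{E})^{*}\bigr)\to(p^{*}\Sym^{m}\mathcal{E})^{*}$ is supported on $P\setminus P_{0}=\Supp\Lambda$. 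So by R\"uckert's Nullstellensatz, for $k\gg0$ (uniform by compactness of $P$) we can write $\lambda^{mk}\iota(u^{*})=\sum_j a_j\,p^{*}e_j$ with $a_j$ holomorphic. Then $\log|\lambda^{mk}\iota(u^{*})|_{G^{*}}\le\log\sum_j|a_j|+\max_j\log|e_j|_{G^{*}}\circ p$. This is bounded above because each $\log|e_j|_{G^*}-f$ extends as a psh function across $X\setminus X_0$ by Theorem~\ref{thm-gr}. This version uses only the available direction of the inequality, and it is exactly where the multiple of $\Lambda$ enters.

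Your fallback is the paper's strategy: perturb by $h^{\delta}$ from Lemma~\ref{lemm-metric} so the curvature becomes a K\"ahler current, then invoke Theorem~\ref{thm-extension}. The last step, letting $\delta\to0$ by monotonicity, does not work. Theorem~\ref{thm-extension} gives, for each fixed $\delta$, an upper bound whose constant is not controlled as $\delta\to0$. The $L^2$-estimates degrade with the lower bound $\delta a\,\omega_P$, and the boundedness of the Bergman weight comes from a purely qualitative continuity/open-mapping argument. Since the weights of $g\cdot h^{\delta}$ merely converge to that of $g$, non-uniform upper bounds say nothing about the limit.

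The missing idea is to keep $\delta$ fixed and absorb $h^{\delta}$ into $\Lambda$. Since $\sqrt{-1}\Theta_h=[\delta E]$ modulo smooth forms and $\Supp E\subset\Supp\Lambda$, the weight of $h^{\delta}$ is bounded below by a positive multiple of $\log|\lambda|^{2}$, up to a constant. Hence local upper boundedness of the weight of $g\cdot h^{\delta}\cdot h_{\Lambda}$ yields that of $g\cdot h_{2\Lambda}$. This is precisely why the statement allows replacing $\Lambda$ by a multiple, a freedom your fallback never uses.
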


\begin{proof}
As explained in the proof of Lemma \ref{lemm-metric}, 
there exists a singular metric $h$ on $L$ (not only on $L|_{P_{0}}$)
such that $\sqrt{-1}\Theta_{h}=[\delta E]$ modulo smooth forms and
\[
\sqrt{-1}\Theta_{h} + C\, p^{*}\omega_{X} \geq a\, \omega_{P}
\]
for some constants $C \gg 1$ and $1\gg a>0$, 
where $E$ is an effective $\pi$-exceptional divisor. 
Then, the metric $g \cdot h^{\delta}$ on $P_{0}$
satisfies the assumptions needed to apply the argument of Theorem\ref{thm-extension}. 
Thus, its weight is locally bounded above on $P$ when $g \cdot h^{\delta}$ is regarded as a metric on $(L+\Lambda)|_{P_{0}}$. 
Precisely, this means that $g \cdot h^{\delta} \cdot h_{\Lambda}$ extends to a metric on $(L+\Lambda)$. 
Hence $g \cdot h_{2\Lambda}$ also extends,
since $\Supp(E)\subset \Supp(\Lambda)$ and $\sqrt{-1}\Theta_{h}=[\delta E]$ modulo smooth forms.
\end{proof}

We now give a characterization of the condition $c_{1}(\mathcal{E})=0$.
Although this characterization may look obvious at first glance, the proof relies on Theorem~\ref{thm-ext} and \cite[Theorem 1.1]{CGNPPW},
and hence involves a subtle point.

\begin{proposition}\label{prop-flat}
Let $\mathcal{E}$ be a torsion-free sheaf of rank $1$ on $X$.
Let $p \colon P \to X$ be a modification such that $M:=(p^{*}\mathcal{E})/\tor$ is an invertible sheaf.
Then, the following conditions are equivalent:
\begin{itemize}
\item[$(1)$] $c_{1}(M)$ is represented by a $($not necessarily effective$)$ $p$-exceptional divisor.

\item[$(2)$] $\mathcal{E}$ admits a singular  metric $h$
such that $\sqrt{-1}\Theta_{h}=0$ holds on the Zariski open set $X_{0}=X_{\reg} \cap X_{\mathcal{E}}$.
\end{itemize}
If, in addition, the sheaf $\mathcal{E}$ is pseudo-effective, then the above conditions are equivalent to the following condition:
\begin{itemize}
\item[$(3)$] $\int_{X} c_1 (\mathcal{E}) \wedge \omega^{n-1}=0$ holds for some K\"ahler form $\omega$ on $X$.
\end{itemize}
\end{proposition}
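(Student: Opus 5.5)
I will prove the cycle (1)$\Rightarrow$(2)$\Rightarrow$(1), and then, assuming $\mathcal{E}$ is pseudo-effective, (2)$\Rightarrow$(3)$\Rightarrow$(1).

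\emph{(1)$\Rightarrow$(2).} Write $c_1(M)=c_1(\mathcal{O}_P(D))$ with $D$ a $p$-exceptional divisor. Then $M(-D)$ is a line bundle on $P$ with vanishing real first Chern class. By $\ddbar$-lemma arguments on the compact K\"ahler manifold $P$, it carries a smooth metric $g$ with $\sqrt{-1}\Theta_g=0$. Since $p$ is an isomorphism over a Zariski open subset of $X_0$ away from $p(D)$, and $p(D)$ has codimension $\geq 2$, the metric $g$ descends to a metric $h$ on $\mathcal{E}$ over $X_0\setminus p(\Supp D)$ that is flat there. The local weights are pluriharmonic, hence psh together with their negatives, so by Theorem~\ref{thm-gr} they extend across the codimension $\geq 2$ set. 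This gives a singular metric on $\mathcal{E}|_{X_0}$ with zero curvature on $X_0$.

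\emph{(2)$\Rightarrow$(1).} Pull $h$ back to $P$ via $M|_{p^{-1}(X_0)}\cong p^*\mathcal{E}|_{X_0}$. The curvature current of $p^*h$ on $M$ vanishes on $p^{-1}(X_0)$. To control it near $P\setminus p^{-1}(X_0)$, apply Theorem~\ref{thm-ext} with $m=1$ and $\theta=0$. Since $\mathcal{E}$ has rank~$1$, we have $\mathbb{P}(\mathcal{E})\to X$ bimeromorphic and $L=M$ up to the identification, so $h$ extends as a metric on $M+\Lambda'$ with semipositive curvature. Applying the same argument to $h^{-1}$ on the dual (the weight is pluriharmonic on $X_0$, so $\pm$ the weight is psh) gives a two-sided bound. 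Hence the curvature current $T$ of the extended metric on $M$ is a closed $(1,1)$-current of order zero supported on $P\setminus p^{-1}(X_0)$. Its divisorial part lives on divisors contained in $p^{-1}(X\setminus X_0)$, and these are $p$-exceptional because $\codim(X\setminus X_0)\geq 2$. Siu decomposition then forces $T=\sum a_i[E_i]$ with $E_i$ $p$-exceptional, which yields (1).

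\emph{(2)$\Rightarrow$(3) and (3)$\Rightarrow$(1).} Given (1), we have $\int c_1(M)\wedge p^*\omega^{n-1}=0$, since exceptional divisors integrate to zero against $p^*\omega^{n-1}$; this proves (2)$\Rightarrow$(3). For (3)$\Rightarrow$(1), pseudo-effectivity and Theorem~\ref{thm-ext} give, for each $\e$, a metric on $M+\Lambda$ with $\sqrt{-1}\Theta\geq-\e p^*\omega_X$. Hence $c_1(M)+\{\Lambda\}$ is pseudo-effective on $P$. Taking a limit yields a closed positive current $T\in c_1(M)+\{\Lambda\}$, and $\int T\wedge p^*\omega^{n-1}=0$. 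Positivity then forces $T$ to be supported in the exceptional locus; more precisely, $T$ has no mass on $p^{-1}(X_0)$ against $p^*\omega^{n-1}$, which gives vanishing on an open set. Therefore $T$ is a sum of exceptional divisors, and (1) follows.

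\emph{Main obstacle.} The last implication is the subtle point the authors flag. A positive current with zero trace against $p^*\omega^{n-1}$ vanishes over $X_0$ only after pushing forward: $p_*T$ is a positive current on $X$ with $\int p_*T\wedge\omega^{n-1}=0$, so $p_*T=0$. Since $p$ is an isomorphism over $X_0$, this gives $T|_{p^{-1}(X_0)}=0$, and by support theorems $T$ is a combination of components of $P\setminus p^{-1}(X_0)$. If this step fails for non-K\"ahler subtleties, I would instead invoke \cite[Theorem 1.1]{CGNPPW} for the vanishing.
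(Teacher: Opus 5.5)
Your proposal is correct and follows essentially the paper's route: (2)$\Rightarrow$(1) goes through Theorem~\ref{thm-ext} with $\theta=0$ and the support theorem, and (3)$\Rightarrow$(1) takes a limit positive current in $c_1(L+\Lambda)$ whose vanishing mass against $p^*\omega^{n-1}$ forces it to be zero on $P_0$. For (1)$\Rightarrow$(2), the paper instead adds $m\Lambda$ to make the divisor effective and pushes forward a metric with curvature $[E]$, which is only a cosmetic variant of your Hartogs-extension argument. Your two-sided bound via the dual in (2)$\Rightarrow$(1) is unnecessary: the extended metric on $L+\Lambda$ already has positive curvature supported on $\Supp\Lambda$, so the support theorem for closed positive currents applies directly, and no appeal to Siu's decomposition for an order-zero current is needed.
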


\begin{proof}
We apply Setup \ref{setup} in the case where $\rk \mathcal{E}=1$.
We may assume that the morphism $p \colon P \to X$ in \eqref{eq-comm} 
is a modification such that it is an isomorphism over $X_{0}$
and $M:=(p^{*}\mathcal{E})/\tor$ is an invertible sheaf.
Since $p$ is an isomorphism over $X_{0}$, there exists a $($not necessarily effective$)$ $p$-exceptional divisor $F$ such that
\[
M \cong L+F.
\]
Indeed, the natural morphisms
\[
p^{*} p_{*}\bigl(\mathcal{O}_{P}(M)\bigr) \to p^{*} p_{[*]}\bigl(\mathcal{O}_{P}(M)\bigr)
\quad\text{and}\quad
p^{*} p_{*}\bigl(\mathcal{O}_{P}(M)\bigr) \to \mathcal{O}_{P}(M)
\]
are isomorphisms on $P_{0}$, and hence both their kernels and cokernels are torsion sheaves supported on a $p$-exceptional divisor.
This implies that the difference $p^{[*]} p_{[*]}\bigl(\mathcal{O}_{P}(M)\bigr) - M$ is represented by a $p$-exceptional divisor.
The same argument applies to $\mathcal{O}_{P}(L)$.
The claimed isomorphism follows from
\[
p_{[*]}\bigl(\mathcal{O}_{P}(M)\bigr) \cong \mathcal{E}^{**} \cong
p_{[*]}\bigl(\mathcal{O}_{P}(L)\bigr).
\]

Assume (2).
Then $L|_{P_{0}}$ admits a smooth metric $h$ with $\sqrt{-1}\Theta_{h}=0$.
By Theorem~\ref{thm-ext}, this metric on $L|_{P_{0}}$ extends to a possibly singular metric on $L+\Lambda$
via $L|_{P_{0}} \cong (L+\Lambda)|_{P_{0}}$ $($which we denote by the same symbol $h$$)$.
Moreover, the curvature $\sqrt{-1}\Theta_{h}$ is supported on the $p$-exceptional locus.
Therefore, the first Chern class of $L+\Lambda$, and hence that of $M$, is represented by a $p$-exceptional divisor, which yields (1).

Conversely, assume (1).
Since the support of $\Lambda$ coincides with the $p$-exceptional locus,
we see that $c_{1}(L+m \Lambda)$ is represented by an effective $p$-exceptional divisor $E$ for $m\gg1$.
Therefore, there exists a singular metric $g$ on $L+m \Lambda$ whose curvature current is the integration current $[E]$.
The metric $h:=p_{*}g$ on $\mathcal{E}|_{X_{0}}$ defined by pushforward satisfies the desired property.

Clearly (1) implies (3).
Assume (3).
Since $\mathcal{E}$ is pseudo-effective,
the above argument shows that $L+m\Lambda$
admits a singular metric $h$ whose curvature current is semipositive for $m\gg1$.
Then, we see from (3) that $\sqrt{-1}\Theta_{h}=0$ on $P_{0}$, which yields (1).
\end{proof}

\subsection{Proof of Theorem \ref{thm-stable}}\label{subsec-proof2}
We begin the proof of Theorem~\ref{thm-stable}.

\begin{proof}[Proof of Theorem \ref{thm-stable}]
We assume that $\mathcal{E}$ is pseudo-effective and has vanishing first Chern class.
We first show that $\mathcal{E}$ is weakly positively curved in the following sense: 

\begin{lemma}\label{lemma-weak}
The sheaf $\mathcal{E}$ admits singular metrics $\{H_{\e}\}$
such that $\sqrt{-1}\Theta_{H_{\e}} \geq - \e \omega_{X} \otimes \id$ and 
each $H_{\e}$ is smooth over a non-empty Zariski open subset of $X$.
Moreover, let $\{h_{\e}\}$ be the singular metrics on $L+\Lambda$ obtained by extending
the induced metrics on $L|_{P_{0}}$ as in Theorem \ref{thm-ext}.
Then $\{h_{\e}\}$ satisfy the following properties:
\begin{itemize}
\item[$(1)$] $\sqrt{-1}\Theta_{h_{\e}} + \e p^{*}\omega_{X} \geq \delta_{\e}\,\omega_{P}$ holds on $P$
for some $\delta_{\e}>0$;
\item[$(2)$] $h_{\e}$ is induced by $H_{\e}$ and the surjective morphism
$p^{*}\mathcal{E}|_{P_{0}} \to L|_{P_{0}}$;
\item[$(3)$] $h_{\e}$ is smooth over a non-empty Zariski open subset of $P$.
\end{itemize}
\end{lemma}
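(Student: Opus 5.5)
Approach: obtain $H_\e$ by taking the $m$-th root of the metrics on symmetric powers via $L$, pushing forward, and adding a small smooth-metric correction, using $c_1(\mathcal{E})=0$ only to control the determinant if needed. I would follow Theorem~\ref{thm-psef} and Lemma~\ref{lemm-metric} closely.

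First, by Proposition~\ref{semistable}-type arguments, we may replace $\mathcal{E}$ by itself over $X_0$, since all metrics live on $X_0$. By Lemma~\ref{lemm-metric}, $L|_{P_0}$ carries metrics $g_\e$ with $\sqrt{-1}\Theta_{g_\e}+\e p^*\omega_X\ge \delta_\e\omega_P$. However, these come from $H_m^{1/m}$, which are not induced by metrics on $\mathcal{E}$ itself. To get a metric on $\mathcal{E}$ inducing the metric on $L$, I would instead use the direct image construction. Set $H_\e$ to be the $L^2$ (Narasimhan--Simha type) metric on $p_*(K_{P/X}+(r+1)L)\otimes(\det\mathcal{E})^{-1}\cong \mathcal{E}$ over $X_0$, computed with the weight $g_\e^{r+1}$ twisted by a singular flat metric on $\det\mathcal{E}$ given by Proposition~\ref{prop-flat}(2), whose hypotheses hold because $\det\mathcal{E}$ is pseudo-effective (Proposition~\ref{dualpsef}) with $c_1=0$. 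By the positivity of direct images (Berndtsson--P\u{a}un, and \cite{PT18,HPS18} in the singular setting), $\sqrt{-1}\Theta_{H_\e}\ge -(r+1)\e\omega_X\otimes\id$ on $X_0$, after rescaling $\e$.

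To achieve smoothness on a Zariski open set, I would first regularize $g_\e$ by Demailly's approximation on $P$. This gives analytic singularities while losing an arbitrarily small amount of positivity, which the strict positivity $\delta_\e\omega_P$ absorbs. The non-dominance of the Lelong-number locus is preserved, so the singularities lie over a proper analytic subset of $X$. Off that set the fibrewise integrals are smooth in the base, so $H_\e$ is smooth on a non-empty Zariski open subset.

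Then let $h_\e$ be the metric on $L|_{P_0}$ induced from $p^*H_\e$ through $p^*\mathcal{E}\to L$. This gives (2) by construction. The curvature bound for $H_\e$ yields $\sqrt{-1}\Theta\ge -\e p^*\omega_X$, and mixing with the metric $h^{\delta}$ from Lemma~\ref{lemm-metric}, with $\delta\sim\e$, gives the strict term $\delta_\e\omega_P$. The mixing spoils (2) exactly, so instead I would multiply $H_\e$ itself by a smooth correction: replace $H_\e$ by $H_\e\cdot$(a fixed smooth metric on $\mathcal{E}$ near a general point). Alternatively, I would note that Fubini--Study-type metrics induced from a Griffiths-semipositive-up-to-$\e$ metric give relative strict positivity automatically. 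Theorem~\ref{thm-ext} then extends $h_\e$ to $L+\Lambda$ with the curvature bound, giving (1), and (3) follows from the smoothness of $H_\e$ over the Zariski open set.

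The main obstacle is securing (1) with a strict $\delta_\e\omega_P$ while keeping $h_\e$ exactly induced by $H_\e$. Fibrewise positivity is automatic from the quotient construction. Horizontal strictness, however, requires adding $\e$ times a positive form, so I would define $H_\e:=H'_{\e/2}e^{-\e\psi}$ locally. Globally, I would take $\e/2$ of the curvature slack, pushing the loss into $-\e\omega_X$, and use the relative ampleness computation in Lemma~\ref{lemm-metric}.
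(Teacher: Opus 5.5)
Your construction is the paper's. You take strictly positive metrics on $L$ whose Lelong locus does not dominate $X$ and regularize them by Demailly's approximation. You then form the $L^{2}$-metric on $p_{*}(K_{P/X}+(r+1)L)\cong\mathcal{E}\otimes\det\mathcal{E}$ over $X_{0}$ and twist by the flat metric on $\det\mathcal{E}|_{X_{0}}$ from Proposition~\ref{prop-flat}. You are right that pseudo-effectivity of $\det\mathcal{E}$, via Proposition~\ref{dualpsef}, is what makes that proposition applicable. Finally $h_\e$ is the induced metric, extended by Theorem~\ref{thm-ext}.

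Two steps need to be made explicit with tools the paper supplies.
\begin{itemize}
\item The metrics $g_\e$ of Lemma~\ref{lemm-metric} live only on $L|_{P_{0}}$. A global Demailly approximation with arbitrarily small loss of positivity needs a metric on the compact manifold $P$. So you must first extend $g_\e$ to $L+\Lambda$ by Theorem~\ref{thm-extension}, whose hypothesis is exactly your curvature bound. Equivalently, start, as the paper does, from the metrics of Theorem~\ref{thm-psef}.
\item The positivity of direct images gives a positively curved $L^{2}$-metric only on $p_{*}\bigl(\mathcal{O}(K_{P/X}+(r+1)L)\otimes\mathcal{I}(g_\e^{r+1})\bigr)$, not on the full direct image. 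Since the polar set does not dominate $X$, this inclusion is generically an isomorphism, and Proposition~\ref{genersurjet} transfers the metric.
\end{itemize}

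Your last paragraph, on the strict term $\delta_\e\omega_P$ in (1), does not work and should be dropped.
\begin{itemize}
\item There is no operation ``$H_\e\cdot$(a smooth metric on $\mathcal{E}$ near a general point)'' on Hermitian metrics of a vector bundle.
\item A factor $e^{-\e\psi}$ is only local, and being scalar it does not change the fibrewise Fubini--Study part.
\item Keeping $\e/2$ of the slack gives strictness only in horizontal directions. The vertical positivity of an induced metric degenerates wherever $H_\e$ does.
\end{itemize}
For example, over a disc take the positively curved metric on $\mathcal{O}^{\oplus 2}$ whose dual satisfies $|e|^{2}=|e_1|^{2}+|z|^{2}|e_2|^{2}$. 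It induces on $\mathcal{O}(1)$ the weight $\log(|w|^{2}+|z|^{2})$ in an affine chart. At $(0,w_0)$ with $w_0\neq 0$, its Levi form is $\sqrt{-1}\,dz\wedge d\bar z/|w_0|^{2}$. Adding $\e\sqrt{-1}\,dz\wedge d\bar z$ still leaves the fibre direction $\partial_w$ degenerate. So no uniform $\delta_\e$ comes out of the induced construction.

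The paper attempts no such correction. It takes the induced metric and invokes Theorem~\ref{thm-ext}, whose conclusion is only the non-strict bound $\sqrt{-1}\Theta_{h_\e}+\e p^{*}\omega_X\ge 0$ on $P$. That bound, together with (2) and (3), is all that is used later: it forms the limit $h$ with $\sqrt{-1}\Theta_h\ge 0$ and allows Proposition~\ref{addlemma} to be applied.
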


\begin{proof}
In the notation of Setup \ref{setup}, by \cite[Lemmas 2.3 and 2.4]{MWWZ25},
there exist singular  metrics $\{h_{\e}\}$ on $L+\Lambda$ such that
\begin{itemize}
\item[(a)] $\sqrt{-1}\Theta_{h_{\e}} + \e p^{*}\omega_{X} \geq \delta_{\e}\,\omega_{P}$ holds on $P$
for some $\delta_{\e}>0$;
\item[(b)] $\{x \in P\mid \nu(h_{\e}, x) > 0\}$ is not dominant over $X_{0}$.
\end{itemize}
By Demailly's approximation theorem \cite{Dem92},
we can replace $\{h_{\e}\}$ with singular metrics with analytic singularities so that $h_{\e}$ is smooth over a Zariski open set in $X$ by property $(b)$.

Consider the natural inclusion
$$
p_{*} \bigl( \mathcal{O}_{P}(K_{P/X} + (r+1) (L+\Lambda )) \otimes \I{h_{\e}^{r+1}} \bigr)
\subset
p_{*} \bigl( \mathcal{O}_{P}(K_{P/X} + (r+1) (L+\Lambda )) \bigr).
$$
By (a), the positivity of direct image sheaves \cite{PT18, HPS18}
shows that the left-hand side is weakly positively curved.
More precisely, the induced $L^{2}$-metrics $\{G_{\e}\}$ on the left-hand side satisfy
$\sqrt{-1}\Theta_{G_{\e}} \geq - \e \omega_{X} \otimes \id$.
Moreover, by construction, $G_{\e}$ is smooth over a non-empty Zariski open subset of $X$.
The above inclusion is generically an isomorphism, since the multiplier ideal sheaf $\I{h_{\e}^{r+1}}$ is trivial over a Zariski open subset of $P$.
Hence, the right-hand side also admits singular Hermitian metrics $\{H_{\e}\}$ with the same properties (see Proposition \ref{genersurjet}).

On the other hand, since $p \colon P_{0} \to X_{0}$ is the projective space bundle associated with $\mathcal{E}|_{X_{0}}$,
we have
$$
K_{P/X} + (r+1) (L+\Lambda) \cong L+ p^{*} \det \mathcal{E} \quad \text{ over }X_{0},
$$
where $r$ is the rank of $\mathcal{E}$.
Here we used $\Lambda=0$ over $X_{0}$.
This shows that 
\begin{align*}
p_{*} \bigl( \mathcal{O}_{P}(K_{P/X} + (r+1) (L+\Lambda )) \bigr)  \cong
p_{*} \big( \mathcal{O}_{P_{0}}(L+ p^{*} \det \mathcal{E}) \big)=(\mathcal{E} \otimes \det \mathcal{E})
\end{align*}
over $X_{0}$.
Note that this identification holds only over $X_{0}$ 
since $\det \mathcal{E}$ is not necessarily invertible on $X$.

By the assumption $c_{1}(\mathcal{E})=0$,
Proposition \ref{prop-flat} shows that there exists a smooth  metric $g$ on $\det \mathcal{E}|_{X_{0}}$ such that $\sqrt{-1}\Theta_{g}=0$.
Set $H_{\e}:=G_{\e} \cdot g^{-1}$.
Since $\codim (X \setminus X_{0}) \geq 2$, the metric $H_{\e}$ extends across $X\setminus X_{0}$ as a singular Hermitian metric on $\mathcal{E}$,
and it satisfies the desired properties.

Finally, the remaining assertions follow from the construction of $h_{\e}$ and Theorem \ref{thm-ext}.
Indeed, the metric $h_{\e}$ on $L|_{P_{0}}$ is induced by $H_{\e}$ and the surjective morphism
$$
p^{*}\mathcal{E}|_{P_{0}} \to L|_{P_{0}}.
$$
By Theorem \ref{thm-ext}, it extends to a singular  metric $h_{\e}$ on $L+\Lambda$.
This completes the proof.
\end{proof}

Let $h$ be the singular metric on $L+\Lambda$ obtained as a limit of $\{h_{\e}\}$.
We consider the upper level set of Lelong numbers of $h$:
\[
P_{+}:=\{x \in P \mid \nu(h, x) > 0\}.
\]
We divide the argument into two cases:
\begin{itemize}[leftmargin=5.5em]
\item[{\bf Case 1}:] $P_{+}$ is not dominant over $X$.
\item[{\bf Case 2}:] $P_{+}$ is dominant over $X$.
\end{itemize}

\begin{case}\label{case1}
Suppose that $P_{+}$ is not dominant over $X$.
We will show that $\mathcal{E}^{**}|_{X_{\reg}}$ is Hermitian flat.
By applying the positivity of direct image sheaves in \cite{HPS18} (see also \cite{PT18}) once again,
we conclude that the direct image sheaf
$$
p_{*}\bigl(\mathcal{O}_{P}(K_{P/X} + (r+1)(L+\Lambda)) \otimes \I{h^{r+1}}\bigr)
\subset
p_{*}\bigl(\mathcal{O}_{P}(K_{P/X} + (r+1)(L+\Lambda))\bigr)
$$
admits a positively curved singular Hermitian metric.
Under the assumption of Case~\ref{case1}, the above inclusion is generically an isomorphism.
Hence, by Proposition \ref{genersurjet}, we conclude that
$$
p_{*}\bigl(\mathcal{O}_{P}(K_{P/X} + (r+1)(L+\Lambda))\bigr)
$$
also admits a positively curved singular Hermitian metric.
Arguing as in Lemma~\ref{lemma-weak},
and using the assumption $c_{1}(\mathcal{E})=0$,
we obtain a positively curved singular Hermitian metric $H$ on $\mathcal{E}$.

The determinant metric $\det H$ induces a positively curved singular metric on $\det\mathcal{E}$.
By $c_{1}(\mathcal{E})=0$, the weight of $\det H$ is pluriharmonic (hence smooth) on $X_{0}$
(see the proof of Proposition \ref{prop-flat}).
By \cite[Theorem~1.6]{Rau15}, the curvature of $H$ is well-defined. Then $H$ is Hermitian flat on $\mathcal{E}|_{X_{0}}$ by \cite[Cororally~2.9]{CP17} or \cite[Lemma~3.6]{HIM22}.
This shows that $\mathcal{E}|_{X_{0}}$ is induced by a unitary representation of $\pi_{1}(X_{0})$.

Since $X_{\reg}$ is smooth and $\codim(X \setminus X_{\mathcal{E}})\geq 2$,
the inclusion $X_{0}\hookrightarrow X_{\reg}$ induces an isomorphism
\[
\pi_{1}(X_{0}) \xrightarrow{\ \sim\ } \pi_{1}(X_{\reg})
\]
by the van Kampen theorem.
Thus, the unitary representation extends to $\pi_{1}(X_{\reg})$, and hence $\mathcal{E}^{**}$ is Hermitian flat on $X_{\reg}$.
\end{case}

\begin{case}\label{case2}
Suppose that $P_{+}$ is dominant over $X$.
Assume that $\mathcal{E}$ is stable with respect to a K\"ahler form $\omega_{X}$.
We will show that this case cannot occur.

\medskip
\noindent\emph{The smooth case.}
To illustrate the idea of the proof, we first consider the case where
$X$ is smooth and $\mathcal{E}$ is locally free.

Since $\mathcal{E}$ is stable and $X$ is smooth, by \cite{UY86}, there exists a smooth Hermitian--Einstein metric $G$ on $\mathcal{E}$, i.e.\
\[
\sqrt{-1}\Theta_{G}(\cE)\wedge \omega_{X}^{n-1}\equiv 0
\quad\text{on } X.
\]
Let $g$ be the induced metric on $L :=\mathcal{O}_{\mathbb P (\cE)} (1)$.
Set $P := \mathbb P (\cE)$, and let $p\colon P \to X$ be the natural projection.

We now construct a destabilizing subsheaf.
By Proposition \ref{addlemma} and Lemma \ref{lemma-weak}~(2), and arguing as in the proof of Lemma~\ref{lemma-weak},
there exists $0<\delta<1$ such that
\[
\cG := p_* \Big(
\mathcal{O}_{P}\bigl(K_{P/X} + (r+1)L \bigr)\otimes
\mathcal{I}\bigl((h^{\delta}\cdot g^{(1-\delta)})^{r+1}\bigr)
\Big)
\]
is a nonzero subsheaf of $\mathcal{E}\otimes \det\mathcal{E}$, and the inclusion is proper.
By the construction of $g$, we have
\[
\sqrt{-1}\Theta_{g}(L)\wedge p^*\omega_{X}^{n-1}\geq 0
\quad\text{over } X.
\]
Moreover, the same holds for $h$ since $\sqrt{-1}\Theta_{h}(L)\geq 0$ in the sense of currents.
Hence we have
\[
\sqrt{-1}\Theta_{h^{\delta}\cdot g^{(1-\delta)}}(L)\wedge p^*\omega_{X}^{n-1}\geq 0
\quad\text{over } X.
\]
Therefore, applying \cite[Theorem 1.6]{CCP19}, we obtain
\[
\sqrt{-1}\Theta_{\det H}(\det \cG)\wedge \omega_{X}^{n-1}\geq 0
\quad\text{on } X,
\]
where $H$ is the $L^{2}$-metric on $\cG$ induced by $h^{\delta}\cdot g^{(1-\delta)}$.

By the above curvature inequality, we have
\[
\int_X c_1(\cG)\wedge \omega_X^{n-1}\ge 0.
\]
By $c_1(\cE)=0$, Proposition \ref{prop-flat} implies
\[
\int_X c_1\bigl(\cG\otimes(\det\cE)^{*}\bigr)\wedge \omega_X^{n-1}
=
\int_X c_1(\cG)\wedge \omega_X^{n-1}\ge 0.
\]
By construction, the sheaf $\cG\otimes(\det\cE)^{*}$ is a nonzero subsheaf of $\cE$, and the inclusion is proper.
Therefore, the sheaf $\cG\otimes(\det\cE)^{*}$ destabilizes $\cE$, a contradiction.

\medskip
\noindent\emph{The singular case.}
We now treat the general case.
Let $\pi\colon \tilde{X}\to X$ be a resolution of singularities of $X$ such that
$$E:=\pi^{*}\cE/\tor$$ is a locally free sheaf on $\tilde{X}$.
Let $\omega_{\tilde{X}}$ be a K\"ahler metric on $\tilde{X}$.
By \cite[Section 3]{CGNPPW} or \cite{Tom21}, the locally free sheaf $E$ is stable with respect to $\pi^*\omega_X +\ep \omega_{\tilde{X}}$ for $0< \ep\ll 1$.
Hence there exists a smooth Hermitian--Einstein metric $G_\ep$ on $E$ with respect to $\pi^*\omega_X +\ep \omega_{\tilde{X}}$, i.e.\ it satisfies
$$\sqrt{-1} \Theta_{G_\ep} (E) \wedge (\pi^*\omega_X +\ep \omega_{\tilde{X}})^{n-1}
= c_\ep \id_E \cdot  (\pi^*\omega_X +\ep \omega_{\tilde{X}})^n. $$
Here $c_\ep$ is the constant satisfying $\lim_{\ep \to 0} c_\ep =0$.
Let $g_\ep$ be the quotient metric on $\mathcal{O}_{\mathbb P (E)} (1)$ induced by $G_\ep$.
The metric $g_\ep$ satisfies
\begin{equation}\label{curvc}
\sqrt{-1}\Theta_{g_\ep }(\mathcal{O}_{\mathbb P (E)} (1))\wedge \big(\tilde{p}^* (\pi^*\omega_X +\ep \omega_{\tilde{X}})\big)^{n-1}\geq c_\ep  \big(\tilde{p}^*(\pi^*\omega_X +\ep \omega_{\tilde{X}})\big)^n  \quad\text{over }  \tilde{X},
\end{equation}
where $\tilde{p}\colon \mathbb P (E) \to \tilde{X}$ is the natural projection.

Let $\tau\colon \tilde{P}\to P$ be a bimeromorphic morphism such that $\tilde{P}$ is smooth and
it satisfies the following commutative diagram:
\[
\xymatrix{
\tilde{P} \ar[d]_{\tau} \ar[rrd]^{\tilde{\pi}} &  &   \\
\mathbb P (E)\ar[d]_{\tilde{p}}  &  & P \ar[d]_{p}  \\
\tilde{X} \ar[rr]_{\pi}&  & X.
}
\]
Then, we can write
\begin{equation}\label{addrel}
\tilde{L} :=\tilde{\pi}^{*}(L+\Lambda) + \Lambda_1 \cong \tau^* \mathcal{O}_{\mathbb P (E)} (1) + \Lambda_2
\end{equation}
for some effective divisors $\Lambda_1$ and $\Lambda_2$ supported in the inverse image of $(X\setminus X_0)$
(see the proof of Proposition \ref{prop-flat}).

Let $\tilde{h}$ be the singular metric on $\tilde{L}$ induced by the metric $h$ on $L$ and the metric
defined by the natural section of $\Lambda_1$.
By construction, the curvature $\sqrt{-1}\Theta_{\tilde{h}}$ is semipositive.
On the other hand, thanks to \eqref{addrel}, we obtain the singular metric $\tilde{g}_\ep$ on $\tilde{L}$
induced by the metric $g_\ep$ on $\mathcal{O}_{\mathbb P (E)} (1)$ and the metric defined by the natural section of $\Lambda_2$.

Consider the direct image sheaf
\[
\tilde{\cG} := (\tilde{p} \circ \tau )_*\Big(
\mathcal{O}_{\tilde{P}}\bigl(K_{\tilde{P}/\tilde{X}}+(r+1)\tilde{L}\bigr)\otimes
\mathcal{I}\bigl((\tilde{h}^{\delta}\cdot \tilde{g}_\ep^{(1-\delta)})^{r+1}\bigr)
\Big)
\]
and the induced $L^{2}$-metric $\tilde{H}_\ep$ on $\tilde{\cG}$.
Note that $g_{\ep}$ is smooth, hence the multiplier ideal sheaf
$\mathcal{I}\bigl((\tilde{h}^{\delta}\cdot \tilde{g}_\ep^{(1-\delta)})^{r+1}\bigr)$ depends only on $\delta$.
In particular, the sheaf $\tilde{\cG}$ is independent of $\ep$.

Applying \cite[Theorem 1.6]{CCP19} and using \eqref{curvc}, we obtain
\[
\sqrt{-1}\Theta_{\det \tilde{H}_\ep}(\det \tilde{\cG})\wedge (\pi^*\omega_X +\ep \omega_{\tilde{X}})^{n-1}\geq c_\ep (\pi^*\omega_X +\ep \omega_{\tilde{X}})^{n}
\quad\text{on } \tilde{X}.
\]
Taking $\ep \to 0$ and using $\lim_{\ep \to 0}c_\ep=0$, we obtain
\begin{equation}\label{strictsheaf}
\int_{\tilde{X}} c_{1}(\tilde{\cG})\wedge \pi^{*}\omega_{X}^{n-1}\geq 0.
\end{equation}

Since $\pi_*\tilde{\cG} \subset \cE\otimes \det \cE$ on $X_{0}$, we have
\[
\bigl(\pi_{[*]}\tilde{\cG}\bigr) \subset \cE[\otimes ]\det\cE \quad\text{on } X.
\]
By Proposition \ref{addlemma} and Lemma \ref{lemma-weak}~(2), there is $\delta>0$
such that the left-hand side is a nonzero proper subsheaf of $\cE[\otimes]\det\cE$.
 
Moreover, the slope $\mu_{\omega_X}\bigl(\pi_{[*]}\tilde{\cG}\bigr)$ can be computed as
\[
\int_{\tilde{X}} c_{1}\Bigl(\pi^{*}\bigl(\pi_{[*]}\tilde{\cG}\bigr)/\tor\Bigr)\wedge \pi^{*}\omega_{X}^{n-1}
=
\int_{\tilde{X}} c_{1}(\tilde{\cG})\wedge \pi^{*}\omega_{X}^{n-1}.
\]
Here the equality follows since $\int_D \pi^{*}\omega_{X}^{n-1}=0$ for any $\pi$-exceptional divisor $D$, and
\[
c_1(\tilde{\cG})-c_1\Bigl(\pi^{*}\bigl(\pi_{[*]}\tilde{\cG}\bigr)/\tor\Bigr)
\]
is supported on the $\pi$-exceptional locus (cf.\ the argument in the proof of Proposition \ref{prop-flat}).
Therefore, by \eqref{strictsheaf}, we obtain
\begin{equation}
 \mu_{\omega_X}\bigl(\pi_{[*]}\tilde{\cG}\bigr)\geq 0.
\end{equation}
This contradicts the stability of $\cE$ with respect to $\omega_{X}$.

\end{case}
\end{proof}

\subsection{Proof of Theorem \ref{thm-main}}\label{subsec-proof1}
In this subsection, we prove Theorem \ref{thm-main}.
\begin{proof}[Proof of Theorem \ref{thm-main}~(i)]
By Theorem \ref{thm-maxqetale}, there exists a maximally quasi-\'etale cover
$$
\nu \colon X' \to X. 
$$
We will prove assertion~(i) for this maximally quasi-\'etale  cover $\nu \colon X' \to X$.
The proof is divided into two steps.

\setcounter{step}{0}
\begin{step}
The reflexive pullback $\nu^{[*]}\mathcal{E}$ is also pseudo-effective and has vanishing first Chern class.
Fix a K\"ahler form $\omega_{X'}$ on $X'$.
By Proposition \ref{semistable}, the sheaf $\nu^{[*]}\mathcal{E}$ is semistable with respect to $\omega_{X'}$.
Hence, we can consider the Jordan--H\"older filtration of $\nu^{[*]}\mathcal{E}$:
\begin{align}\label{eq-jh}
0 =: \cE_0 \subset \cE_1 \subset \cE_2 \subset \cdots \subset \cE_k := \nu^{[*]}\mathcal{E}.
\end{align}
In this step, we prove that each $(\cE_i/\cE_{i-1})^{**}$ is a Hermitian flat locally free sheaf on $X'$.

By construction, the quotient $\cE_i/\cE_{i-1}$ is stable, and its slope vanishes. 
The quotient $\cE_i/\cE_{i-1}$ is torsion-free, hence locally free in codimension $1$,
so we obtain
\[
c_{1}\bigl((\cE_i/\cE_{i-1})^{**}\bigr)=c_{1}(\cE_i/\cE_{i-1}) =0
\]
by the vanishing of the slope and pseudo-effectivity.

We first consider the inclusion $\cE_1 \subset \cE_k=\nu^{[*]}\mathcal{E}$, which induces the generically surjective morphism
$\cE_k^{*}\to \cE_1^{*}$.
By Propositions \ref{genersurjet} and \ref{dualpsef}, the sheaf $\cE_1^{*}$ is pseudo-effective.
Applying Theorem \ref{thm-stable} to $\cE_1^{*}$, 
we conclude that $\cE_1^{*}$ is a Hermitian flat locally free sheaf on $X'_{\reg}$ by $c_1(\cE_1^{*})=0$. 
By Theorem \ref{thm-maxqetale}~(2), it follows that $\cE_1^{*}$ is a Hermitian flat locally free sheaf on $X'$.

The quotient $\cE_k/\cE_1$ is also pseudo-effective and has vanishing first Chern class.
Considering the inclusion $\cE_2/\cE_1 \subset \cE_k/\cE_1$ and repeating the same argument as above,
we deduce that $(\cE_2/\cE_1)^{**}$ is a Hermitian flat locally free sheaf on $X'$.
Iterating this procedure, we see that $(\cE_i/\cE_{i-1})^{**}$ is a Hermitian flat locally free sheaf on $X'$
for every $i$.
\end{step}

\begin{step}
In this step, we prove that $\cE_k^{**}$ is a numerically flat locally free sheaf on $X'$ by induction on $k$.
Note that when $k=1$, there is nothing to prove.
Indeed, in this case, the sheaf $\cE_1=\nu^{[*]}\mathcal{E}$ is stable, and hence $\cE_1^{**}$ is a Hermitian flat locally free sheaf by Theorem~\ref{thm-stable}.

First, we claim that $\cE_i$ is pseudo-effective and satisfies $c_1(\cE_i)=0$ for every $i$.
The inclusion $\cE_i\subset \cE_k=\nu^{[*]}\cE$ induces the generically surjective morphism
$\cE_k^{*}\to \cE_i^{*}$.
By Propositions \ref{genersurjet} and \ref{dualpsef}, we see that $\cE_i^{*}$ is pseudo-effective.
Hence, once we know that $c_1(\cE_i)=0$, the sheaf $\cE_i$ is pseudo-effective by Proposition~\ref{dualpsef}.
To check $c_1(\cE_i)=0$, we consider the exact sequence
\begin{equation}\label{eq-jhshort}
0 \to \cE_{i-1} \to \cE_i \to \cE_i/\cE_{i-1} \to 0.
\end{equation}
Since $c_{1}(\cE_i/\cE_{i-1}) =0$, using \eqref{eq-jhshort}, 
we obtain inductively
\[
c_1(\cE_i)=c_1(\cE_{i-1})+c_1(\cE_i/\cE_{i-1})=0,
\]
starting from $c_1(\cE_1)=0$.

By considering \eqref{eq-jh} up to $(k-1)$,
the induction hypothesis shows that $\cE_{k-1}^{**}$ is a numerically flat locally free sheaf on $X'$.
Let $\mathcal{S}$ be the saturation of $\cE_{k-1}^{**}\subset \cE_k^{**}$, and set $\cF:=\cE_k^{**}$.
Then, we have the exact sequence
\[
0 \to \mathcal{S} \to \cF \to \mathcal{Q} \to 0.
\]
This operation does not change \eqref{eq-jhshort} in codimension $1$.
Indeed, by \cite[Lemma~2.9]{IMZ26} (or by the proof of Proposition~\ref{prop-ref}),
the quotient $\cE_{k}/\cE_{k-1}$ is locally free on $X'_{0}:=X'_{\reg}\cap X'_{\nu^{[*]}\mathcal{E}}$,
and \eqref{eq-jhshort} is exact as a sequence of vector bundles on $X'_{0}$.
Thus, the sheaf $\mathcal{S}$ coincides with $\cE_{k-1}^{**}$; in particular, it is locally free.
Moreover, we have $c_1(\mathcal{Q})=c_1(\cE_k/\cE_{k-1})=0$.

By Proposition~\ref{prop-ref}, there exists a Zariski closed subset $Z\subset X'$ of codimension $\ge 3$
such that $\mathcal{Q}$ is reflexive on $X'\setminus Z$.
Since $\mathcal{Q}^{**}=(\cE_k/\cE_{k-1})^{**}$ is locally free, the sheaf $\mathcal{Q}$ is locally free on $X'\setminus Z$.
Consequently, the sheaf $\cF$ is locally free on $X'\setminus Z$, and the sequence is exact as a sequence of vector bundles on $X'\setminus Z$.
Therefore, the locally free sheaf $\cF|_{X'\setminus Z}$ is determined by an extension class
\[
\beta \in H^{1}(X'\setminus Z,\Hom(\mathcal{Q}^{},\mathcal{S}))
\cong H^{1}(X'\setminus Z,\Hom(\mathcal{Q}^{**},\mathcal{S}^{**})).
\]

By  $\codim Z\ge 3$ and \cite[Theorem~1.14]{ST71},  the restriction map
\[
H^{1}\bigl(X',\Hom(\mathcal{Q}^{**},\mathcal{S}^{**})\bigr)\longrightarrow
H^{1}\bigl(X'\setminus Z,\Hom(\mathcal{Q}^{**},\mathcal{S}^{**})\bigr)
\]
is an isomorphism.
Let $\pi \colon \widetilde{X}' \to X'$ be a desingularisation.
Since $X'$ has rational singularities, for any locally free sheaf $V$ on $X'$ we have
\[
R^{j}\pi_{*}(\pi^{*}V)=V\otimes R^{j}\pi_{*}\mathcal{O}_{\widetilde{X}'}=0
\quad (j\ge 1).
\]
By the Leray spectral sequence, we obtain
\[
H^{1}\bigl(\widetilde{X}',\pi^{*}\Hom(\mathcal{Q}^{**},\mathcal{S}^{**})\bigr)
\cong
H^{1}\bigl(X',\Hom(\mathcal{Q}^{**},\mathcal{S}^{**})\bigr)
\cong
H^{1}\bigl(X'\setminus Z,\Hom(\mathcal{Q}^{**},\mathcal{S}^{**})\bigr).
\]
Hence, the class $\beta$ lifts to a class
\[
\widetilde{\beta}\in H^{1}\bigl(\widetilde{X}',\pi^{*}\Hom(\mathcal{Q}^{**},\mathcal{S}^{**})\bigr)
\cong H^{1}\bigl(\widetilde{X}',\Hom(\pi^{*}\mathcal{Q}^{**}, \pi^{*}\mathcal{S}^{**})\bigr).
\]
Let $V_{\widetilde{\beta}}$ be the locally free sheaf on $\widetilde{X}'$ defined by $\widetilde{\beta}$.
Since $\widetilde{X}'$ is a compact K\"ahler manifold and both $\pi^{*}\mathcal{Q}^{**}$ and $\pi^{*}\mathcal{S}^{**}$ are flat,
Simpson's result \cite{Sim92} (see also \cite[Theorem~1.1]{Den21}) implies that $V_{\widetilde{\beta}}$ is flat.
By construction, the sheaf $\pi^{*}\cF$ coincides with $V_{\widetilde{\beta}}$ over $X'\setminus Z$.
Therefore, the sheaf $\cF$ is flat on $X'_{\reg}$, and hence flat on $X'$ by Theorem~\ref{thm-maxqetale}~(2).
By the local freeness, \cite[Corollary]{Wu22} shows that $\cE_k^{**}=\cF$ is a numerically flat locally free sheaf on $X'$.
\end{step}
\end{proof}

We finally prove assertion (ii) of Theorem \ref{thm-main}. 

\begin{proof}[Proof of Theorem~\ref{thm-main}~(ii)]

\setcounter{step}{0}
\begin{step}
The basic strategy is to descend the flat connection on $\nu^{[*]}\mathcal{E}$ to a flat connection on $\cE|_{X_{\reg}}$.
However, this is not straightforward.
To perform this descent to $X_{\reg}$, we use the explicit construction of flat connections given in \cite{Den21}.

We consider
$$
f \colon \tilde{X}' \xrightarrow{\quad \pi \quad }  X' \xrightarrow{\quad  \nu \quad } X,
$$
where $\nu \colon X' \to X$ is a maximally quasi-\'etale cover and $\pi \colon \tilde{X}' \to X'$ is a resolution of singularities
which is an isomorphism over $X'_{\reg}$.
In what follows, we often identify $f^{-1}(X_{\reg}) \subset \tilde{X}'$ with $\nu^{-1}(X_{\reg}) \subset X'$
via $\pi \colon \tilde{X}' \to X'$.
Further, we consider the Jordan--H\"older filtration of $\mathcal{E}$:
$$
0 =: \cE_0 \subset \cE_1 \subset \cE_2 \subset \cdots \subset \cE_k :=\mathcal{E}.
$$
Arguing as in the proof of~(i), we see that
$$
F_i := (\cE_{i+1}/ \cE_i)^{**}
$$
is Hermitian flat on $X_{\reg}$.
Let $h_{i}$ be a Hermitian flat metric on $F_{i}|_{X_{\reg}}$.
Set $F'_{i}:=\nu^{[*]} F_{i}$ (which is locally free on $X'$ by Theorem~\ref{thm-maxqetale}~(2)),
and let $h'_{i}$ be the extension of $\nu^{*}h_{i}$ to a Hermitian flat metric on $F'_{i}$.
Moreover, by~(i), the reflexive pullback $f^{[*]}\cE$ is locally free and can be written as a successive extension
of the Hermitian flat vector bundles $(\tilde{F_i}, \tilde{h_i})$,
where
$$(\tilde{F_i}, \tilde{h_i}) := \pi^*(F'_i, h'_i).$$
By \cite{Sim92}, we already know that $f^{[*]}\cE$ admits a flat connection.
However, in order to descend it to a flat connection on $\mathcal{E}|_{X_{\reg}}$,
we need the explicit construction of the flat connection in \cite{Den21}.

First, consider the vector bundle $\oplus_{i=1}^k \tilde{F_i}$
equipped with the connection $\tilde \nabla$
and the complex structure $\bar{\partial}_{\eta}$ defined by
\begin{equation*}
\tilde \nabla := \oplus_{i=1}^k D^{(1,0)} _{\tilde{h_i}}
=
\begin{pmatrix}
D^{(1,0)} _{\tilde{h_1}} & 0 & \cdots & 0  \\
0 & D^{(1,0)} _{\tilde{h_2}} & \cdots & 0\\
\vdots & \vdots & \ddots & \vdots \\
0 & 0 & \cdots & D^{(1,0)} _{\tilde{h_k}}
\end{pmatrix}
\end{equation*}
and
\begin{equation}\label{eq:dbar-eta}
\bar{\partial}_{\eta} =
\begin{pmatrix}
\dbar & \eta_{1,2} & \cdots & \eta_{1,k}  \\
0 &  \dbar & \ddots & \vdots \\
\vdots &  \ddots & \dbar & \eta_{k-1,k}  \\
0 &  \cdots & 0 & \dbar
\end{pmatrix}.
\end{equation}
Here $D^{(1,0)} _{\tilde{h_i}}$ denotes the $(1,0)$-part of the Chern connection defined by $\tilde{h_i}$.
Our purpose is to construct smooth $\Hom(\tilde F_j, \tilde F_i)$-valued $(0,1)$-forms
\[
\eta_{i,j} \in C^\infty _{(0,1)} (\tilde{X}', \Hom(\tilde F_j, \tilde F_i))
\]
such that:
\begin{itemize}
\item[(1)] The holomorphic vector bundle $(\oplus_{i=1}^k \tilde{F_i}, \dbar_\eta)$ is isomorphic to $f^{[*]}\cE$ on $\tilde{X}'$.

\item[(2)]
The following equalities hold:
$$
D^{(1,0)}_{\tilde{h}_j^{*}\otimes \tilde{h}_i}\,\eta_{i,j}=0
\quad \text{ and } \quad  \dbar \eta_{i,j}=0,
$$
where $D^{(1,0)}_{\tilde{h}_j^{*}\otimes \tilde{h}_i}$ is
the $(1,0)$-part of the Chern connection on $\Hom(\tilde F_j,\tilde F_i)$ defined by $\tilde{h}_j$ and $\tilde{h}_i$.

\item[(3)] $\eta_{i,j} |_{f^{-1}(X_{\reg})}$ can be written as the pullback of a form on $X_{\reg}$.
\end{itemize}
Once (1)--(3) are established, we obtain a flat connection on $\cE|_{X_{\reg}}$.
Indeed, writing $\eta_{i,j}=f^{*}\xi_{i,j}$ over $X_{\reg}$ by (3),
the holomorphic vector bundle $\oplus_{i=1}^k  F_i$ with $\bar{\partial}_{\xi}$ defined as in \eqref{eq:dbar-eta}
is isomorphic to $\cE|_{X_{\reg}}$ by (1).
Moreover, the connection
$$
\oplus_{i=1}^k D^{(1,0)} _{ h_i}+\dbar_{\xi}
$$
is flat by (2).
\end{step}

\begin{step}
As a warm-up, we treat the case $k=2$.
Consider the exact sequence
\begin{align}\label{eq-short-pull}
0 \to \tilde F_{1} \to f^{[*]}\mathcal{E}_{2}=f^{[*]}\mathcal{E} \to \tilde F_{2}  \to 0. 
\end{align}
We will construct $\eta_{1,2} \in C^\infty _{(0,1)} (\tilde{X}', \Hom (\tilde F_{2}, \tilde F_{1}))$
satisfying properties $(1)$, $(2)$ and $(3)$.

The extension class of \eqref{eq-short-pull} can be represented by a form
$$
\alpha \in  C^\infty _{(0,1)} (\tilde{X}', \Hom (\tilde F_{2}, \tilde{F}_1))
$$
such that
\[
\bar{\partial}_{\alpha} =
\begin{pmatrix}
\dbar & \alpha \\
0 &  \dbar
\end{pmatrix}
\]
satisfies the property $(1)_{\alpha}$, obtained from $(1)$ by replacing $\eta_{i,j}$ with $\alpha$.
Since $\dbar \alpha =0$ by $\bar{\partial}_{\alpha} ^2 =0$,
Hodge theory provides $\psi\in C^\infty (\tilde{X}', \Hom (\tilde F_{2}, \tilde{F}_1))$ such that
\begin{equation}\label{eq-eta12-hodge}
\eta_{1,2} := \alpha +\dbar \psi \in \ker D^{(1,0)}_{\tilde h_2^{*} \otimes \tilde h_1}.
\end{equation}
Hence, the form $\eta_{1,2}$ satisfies the desired property $(1)$ and 
\begin{align}\label{eq-dprime}
D^{(1,0)}_{\tilde h_2^{*} \otimes \tilde h_1}\eta_{1,2}=0.
\end{align}
It remains to verify the descent property (3).

Let $\gamma \in C^\infty _{(0,1)}(X_{\reg}, \Hom (F_2, F_1))$ be a smooth representative of the extension class of $\cE_2|_{X_{\reg}}$ defined by
\begin{align}\label{eq-short}
0 \to F_{1} \to {\cE_2}|_{ X_{\reg} }  \to F_{2}  \to 0.
\end{align}
Restricting \eqref{eq-short-pull} to $f^{-1}(X_{\reg})\cong \nu^{-1}(X_{\reg})$
and comparing with the pullback of \eqref{eq-short},
we see that $\nu^*\gamma$ and $\eta_{1,2}|_{\nu^{-1}(X_{\reg})}$ represent the same extension class over $X_{\reg}$.
Hence, there exists $\phi\in C^\infty (\nu^{-1}(X_{\reg}),  \Hom (F'_{2}, F'_{1}))$ such that
\begin{equation}\label{eq-eta12-splitting}
\eta_{1,2}|_{\nu^{-1}(X_{\reg})} = \nu^*\gamma +\dbar \phi \quad\text{over } X_{\reg}.
\end{equation}

We may assume that $\nu \colon X' \to X$ is a Galois cover with Galois group $G$.
The section defined by the average
$$
\frac{1}{\deg \nu} \sum_{g \in G} g^{*} \phi \in C^\infty (\nu^{-1}(X_{\reg}), \Hom (F'_2, F'_1))
$$
is $G$-invariant, hence it descends to a smooth section
$\phi_{\rm av}\in C^\infty (X_{\reg}, \Hom (F_2, F_1))$ satisfying
$$
\nu^*\phi_{\rm av}=\frac{1}{\deg \nu} \sum_{g \in G} g^{*} \phi \quad\text{over } X_{\reg}.
$$
Set
\begin{equation*}\label{eq-s12}
s:=\nu^*\phi_{\rm av}-\phi \in C^\infty (\nu^{-1}(X_{\reg}), \Hom (F'_2, F'_1)).
\end{equation*}
Then, by Lemma \ref{plurharmonic} (which will be proved in the next step),
the section $s$ is holomorphic over $X_{\reg}$ and bounded with respect to $h_2^{\prime *}\otimes h_1'$.
The Riemann extension type theorem shows that $\pi^*s$ extends to a holomorphic section of $\Hom(\tilde F_2,\tilde F_1)$ on $\tilde{X}'$.

Since $\dbar s=0$ on $\nu^{-1}(X_{\reg})$, we obtain
\begin{equation}\label{eq-eta12-descent}
\eta_{1,2}
= \nu^*\gamma + \dbar \phi
= \nu^*\gamma + \dbar(\nu^*\phi_{\rm av}-s)
= \nu^*(\gamma+\dbar \phi_{\rm av})
\quad\text{on } \nu^{-1}(X_{\reg})
\end{equation}
from \eqref{eq-eta12-splitting}.
Thus $\eta_{1,2}$ is the pullback of a form over $X_{\reg}$, proving (3) in the case $k=2$.
\end{step}

\begin{step}
In this step, we prove the following lemma.
\begin{lemma}\label{plurharmonic}
Let us consider the same situation as in Step~2.
Then the section $s=\nu^*\phi_{\rm{av}}-\phi$ is holomorphic over $X_{\reg}$ and bounded with respect to $h_2^{\prime *}\otimes h_1'$.
\end{lemma}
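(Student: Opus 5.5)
We first show that $s$ is holomorphic over $X_{\reg}$, and then that it is bounded.

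\emph{Holomorphicity.} Each $g\in G$ acts on $\nu^{-1}(X_{\reg})$ and preserves $F'_1,F'_2$ together with the flat metrics $h'_1,h'_2$, since these are pulled back from $X_{\reg}$. Hence $g$ also preserves $\nu^*\gamma$, which is a pullback. The exact sequence \eqref{eq-short-pull} is pulled back from $X$ over $\nu^{-1}(X_{\reg})$, so $g$ preserves its extension class. The form $\eta_{1,2}$ is $\dbar$-closed and $D^{(1,0)}$-closed by \eqref{eq-dprime}, so on $\tilde X'$ it is the harmonic representative of its class for the flat metric $\tilde h_2^*\otimes\tilde h_1$. Harmonic representatives are unique, and $g$ lifts to a biholomorphism of the Galois closure of the resolution, or one can work with a $G$-equivariant resolution. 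It follows that $g^*\eta_{1,2}=\eta_{1,2}$ on $\nu^{-1}(X_{\reg})$. Applying $g^*$ to \eqref{eq-eta12-splitting} gives
\[
\eta_{1,2}=\nu^*\gamma+\dbar(g^*\phi).
\]
Averaging over $G$ yields $\dbar(\nu^*\phi_{\rm av})=\dbar\phi$, so $\dbar s=0$.

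\emph{Boundedness.} We would normalize $\phi$ by a harmonic choice. The section $\phi$ is determined only up to holomorphic sections over $\nu^{-1}(X_{\reg})$, so it cannot be bounded in general, and we first choose it well. Choose a smooth representative $\gamma$ of the extension class on $X_{\reg}$ that descends from a form bounded near the singular points. Concretely, on $\tilde X'$ the difference $\eta_{1,2}-\alpha$ is $\dbar$-exact with smooth potential $\psi$. We choose $\alpha$ to be $G$-invariant, or average it over $G$, which is allowed because the holomorphic structure is $G$-equivariant over $\nu^{-1}(X_{\reg})$. With this choice $\alpha=\nu^*\gamma$ over $X_{\reg}$ and $\phi=\psi$, which is smooth on the compact manifold $\tilde X'$ and therefore bounded for $\tilde h_2^*\otimes \tilde h_1$. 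Each $g^*\phi$ is bounded as well, because $g$ is an isometry for the flat metrics. Hence $\nu^*\phi_{\rm av}$ and $s$ are bounded.

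The main obstacle is making $G$ act compatibly on the resolution, that is, ensuring that $\alpha$ can be taken $G$-invariant on $\nu^{-1}(X_{\reg})$ while remaining globally smooth on $\tilde X'$. Our first attempt would be to take $\pi$ to be a $G$-equivariant resolution, which exists by functorial resolution. With an equivariant $\pi$, the group $G$ acts on $\tilde X'$, and averaging $\alpha$ and $\psi$ over $G$ on all of $\tilde X'$ gives globally smooth invariant forms. If this is not possible, we would instead bound $s$ using the $L^2$ estimate $\int|\phi|^2<\infty$ together with the mean value inequality for $|s|^2$. The function $|s|^2$ is psh because $s$ is holomorphic and the metric is flat. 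This gives local boundedness on $\nu^{-1}(X_{\reg})$ near points of $\pi^{-1}(\mathrm{Sing})$, and hence the Riemann-type extension needed in Step 2.
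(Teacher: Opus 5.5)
Your holomorphicity argument is sound, and it differs from the paper's. With a $G$-equivariant resolution the extension class is $G$-invariant on $\tilde X'$. This choice is harmless, since $\eta_{1,2}|_{X'_{\reg}}$ does not depend on the resolution, and the phrase about a ``Galois closure of the resolution'' should simply be dropped. A $(0,1)$-form that is both $\dbar$-closed and $D^{(1,0)}$-closed for a flat metric is the harmonic representative, so you get $g^*\eta_{1,2}=\eta_{1,2}$ and hence $\dbar s=0$. This even shows directly that $\eta_{1,2}$ descends to $X_{\reg}$.

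The paper needs no equivariant resolution. It uses only that each $g^*\eta_{1,2}$ is $D^{(1,0)}$-closed, and deduces $D^{(1,0)}\dbar s=0$, hence $\idelbar|s|^2\ge 0$ on $\nu^{-1}(X_{\reg})$. The set $\nu^{-1}(X\setminus X_{\reg})$ has codimension $\ge 2$ in the normal space $X'$, so $|s|^2$ extends across it by Theorem~\ref{thm-gr}. By compactness $|s|^2$ is then constant, which gives $\dbar s=0$ and boundedness at the same time.

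The gap is in your boundedness part. The lemma concerns the $\gamma$ and $\phi$ already fixed in Step~2, and these are arbitrary. You instead replace $\alpha$ by a $G$-invariant representative, $\gamma$ by its descent, and $\phi$ by $\psi$. So you only prove boundedness for one special choice, not for the $s$ in the statement.

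The worry that motivated this is unfounded. Your first part gives $\dbar s=0$ for arbitrary $\gamma,\phi$. Then $s$ is a holomorphic section of the locally free sheaf $\Hom(F'_2,F'_1)$ off the analytic set $\nu^{-1}(X\setminus X_{\reg})$, which has codimension $\ge 2$ in the normal space $X'$. Hence $s$ extends holomorphically to the compact space $X'$, and it is bounded for the continuous flat metric $h_2^{\prime *}\otimes h_1'$. Equivalently, your own remark that $|s|^2$ is psh, combined with Theorem~\ref{thm-gr}, makes $|s|^2$ constant.

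Your fallback does not work as written. An arbitrary $\phi$ carries no bound on $\int|\phi|^2$. The mean-value inequality also cannot be applied on balls meeting $\nu^{-1}(X\setminus X_{\reg})$ before $|s|^2$ has been extended there. Note that this set, not $\pi^{-1}(\mathrm{Sing})$, is the relevant one, and it can contain smooth points of $X'$.
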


\begin{proof}
We first show that
\begin{equation}\label{flatcond}
D^{(1,0)}_{h_{2}^{*}\otimes h_{1}} (\gamma +\dbar \phi_{\rm{av}}) =0,
\end{equation}
where $D^{(1,0)}_{h_{2}^{*}\otimes h_{1}}$ is the $(1,0)$-part of the Chern connection on $\Hom(F_2,F_1)$ defined by $h_2$ and $h_1$.
Let $U$ be a small open subset of $X_{\reg}$ such that
$\nu^{-1} (U) =\sqcup_{\ell=1}^{\deg \nu} U_\ell$ is a disjoint union of open subsets,
and $\nu|_{U_{\ell}}: U_\ell \to U$ is biholomorphic.
Fix $U_1$ and, for each $1\leq \ell \leq \deg\nu$, choose $g_{\ell} \in G$ such that $g_{\ell}\colon U_1 \to U_\ell$ is biholomorphic.
Since $h'_i=\nu^{*}h_{i}$, we have $g_{\ell}^* h'_{i} = h'_{i}$ on $U_1$.
By construction, we have $D^{(1,0)}_{h_2^{\prime *}\otimes h_1'}(\eta_{1,2})=0$ on $\nu^{-1}(X_{\reg})$ by \eqref{eq-eta12-hodge},
and hence, we obtain
\begin{equation}\label{eq-eta}
0= D^{(1,0)}_{h_2^{\prime *}\otimes h_1'} (\nu^* \gamma +\dbar \phi ) \quad\text{on }U_1 
\end{equation}
by \eqref{eq-eta12-splitting}. 
Applying $g_{\ell}^*$ to \eqref{eq-eta} and using $g_{\ell}^*(\nu^*\gamma)=\nu^*\gamma$, we obtain
$$
0= D^{(1,0)}_{h_2^{\prime *}\otimes h_1'} \bigl(\nu^* \gamma +\dbar (g_{\ell}^*\phi)\bigr)
\quad\text{on }U_1.
$$
Averaging over $\ell$ yields
$$
0= D^{(1,0)}_{h_2^{\prime *}\otimes h_1'} \Bigl(\nu^* \gamma +\dbar  \frac{1}{\deg \nu} \sum_{\ell=1}^{\deg\nu} g_{\ell}^{* }  \phi\Bigr)
=
D^{(1,0)}_{h_2^{\prime *}\otimes h_1'} \bigl(\nu^* (\gamma+ \dbar \phi_{\rm{av}})\bigr)
\quad\text{on } U_1.
$$
Since $\nu$ is \'etale and $h_i'$ is defined as the pullback of $h_{i}$,
we obtain \eqref{flatcond} on $U$.

Now put $s:=\nu^*\phi_{\rm{av}}-\phi$.
From \eqref{eq-dprime}, \eqref{eq-eta12-splitting}, and \eqref{flatcond}, we have
\begin{align*}
D^{(1,0)}_{h_2^{\prime *}\otimes h_1'}\dbar s &\stackrel{\eqref{eq-eta12-splitting}}{=}
D^{(1,0)}_{h_2^{\prime *}\otimes h_1'} \dbar \nu^*\phi_{\rm{av}} +D^{(1,0)}_{h_2^{\prime *}\otimes h_1'}  (\nu^{*}\gamma - \eta_{1,2}) \\
&\stackrel{\eqref{eq-dprime}}{=}D^{(1,0)}_{h_2^{\prime *}\otimes h_1'} (\dbar \nu^*\phi_{\rm{av}} +  \nu^{*}\gamma) \\
& \stackrel{\qquad }{=} \nu^{*}\big( D^{(1,0)}_{h_2^{*}\otimes h_1} \dbar \phi_{\rm{av}} + \gamma \big) \stackrel{\eqref{flatcond}}{=} 0 \quad\text{over } X_{\reg}.
\end{align*}
Together with the Hermitian flatness of $h'_2$ and $h'_1$, we obtain
\begin{align*}
&\idelbar  |\,s\,|^{2}_{\,h_2^{\prime *}\otimes h_1'}\\
=
&\sqrt{-1}\,\Bigl\langle D^{(1,0)}_{h_2^{\prime *}\otimes h_1'}  s, \, D^{(1,0)}_{h_2^{\prime *}\otimes h_1'} s \Bigr\rangle_{\,h_2^{\prime *}\otimes h_1'} +
\sqrt{-1}\,\Bigl\langle \dbar  s, \, \dbar  s \Bigr\rangle_{\,h_2^{\prime *}\otimes h_1'}
\ \ge\ 0
\quad\text{over } X_{\reg}.
\end{align*}
In particular, the function $|s|^{2}_{h_2^{\prime *}\otimes h_1'}$ is plurisubharmonic over $X_{\reg}$, and hence on $X'$.
Since $X'$ is compact, it is constant, and therefore $\dbar s=0$ and $ D^{(1,0)}_{h_2^{\prime *}\otimes h_1'}  s=0$ over $X_{\reg}$.
This proves the lemma.
\end{proof}
\end{step}

\begin{step}
In this step, we construct $\eta_{i,j} \in C^\infty _{(0,1)} (\tilde{X}', \Hom (\tilde F_j, \tilde F_i))$ satisfying $(1)$--$(3)$ by induction on $k$.
By the induction hypothesis, for every $i<j \leq k-1$, there exists $\eta_{i,j}$ satisfying $(2)$ and $(3)$ such that
$(\oplus_{i=1}^{k-1} \tilde{F_i}, \bar{\partial}_{k-1})$ is isomorphic to $f^{[*]} \cE_{k-1}$ on $\tilde{X}'$,
where $\bar{\partial}_{k-1}$ is the complex structure defined by
\[
\bar{\partial}_{k-1} =
\begin{pmatrix}
\dbar & \eta_{1,2} & \cdots & \eta_{1,k-1}  \\
0 &  \dbar & \cdots & \eta_{2,k-1}\\
0 &  0 & \cdots & \cdots  \\
0 &  0 & 0 & \dbar
\end{pmatrix}.
\]
Our purpose is to construct $\{\eta_{i,k}\}_{i=1}^{k-1}$ satisfying $(1)$, $(2)$, and $(3)$.

Representing the extension class of the corresponding exact sequence, we choose smooth forms
$\{\alpha_i\}_{i=1}^{k-1} \subset C^\infty _{(0,1)} (\tilde{X}', \Hom (\tilde F_k, \tilde F_i))$
such that $(\oplus_{i=1}^k \tilde{F_i}, \bar{\partial}_\alpha)$ is isomorphic to $f^{[*]} \cE$ on $\tilde{X}'$, where
\[
\bar{\partial}_{\alpha} =
\begin{pmatrix}
\dbar & \eta_{1,2} & \cdots & \eta_{1,k-1} & \alpha_1 \\
0 &  \dbar & \cdots & \eta_{2,k-1} & \alpha_2\\
0 &  0 & \cdots & \cdots  &\cdots \\
0 &  0 & 0 & \dbar & \alpha_{k-1}\\
0 &  0 & 0 & 0 & \dbar
\end{pmatrix}.
\]
Then, we have the following two properties:

(i) Since $\bar{\partial}_{\alpha}^{2}=0$, we obtain
\begin{align}\label{complexstr}
\dbar \alpha_{k-1} = 0 \text{ and }
\dbar \alpha_{i} = - \sum_{j=1}^{k-1-i} \eta_{i,i+j}\wedge \alpha_{i+j}
\quad \text{for every } i<k-1.
\end{align}

(ii) For smooth forms $\{\beta_i\}_{i=1}^{k-1} \subset C^\infty _{(0,1)} (\tilde{X}', \Hom (\tilde F_k, \tilde F_i))$ on $\tilde{X}'$,
we set the new complex structure
\[
\bar{\partial}_{\beta} =
\begin{pmatrix}
\dbar & \eta_{1,2} & \cdots & \eta_{1,k-1} & \beta_1 \\
0 & \dbar & \cdots & \eta_{2,k-1} & \beta_2\\
0 & 0 & \cdots & \cdots & \cdots \\
0 & 0 & 0 & \dbar & \beta_{k-1}\\
0 & 0 & 0 & 0 & \dbar
\end{pmatrix}.
\]
The operator $\bar{\partial}_\beta$ defines the same vector bundle as $\bar{\partial}_\alpha$
if and only if there are smooth sections
\(f_i\in C^\infty\bigl(\tilde{X}',\,\Hom(\tilde F_k,\tilde{F_i})\bigr)\) such that
\begin{equation}\label{eq:beta-alpha}
\left\{
\begin{aligned}
\beta_{k-1} &= \alpha_{k-1} + \dbar f_{k-1}, \\
\beta_{k-2} &= \alpha_{k-2} - \eta_{k-2,k-1}\cdot f_{k-1} + \dbar f_{k-2}, \\
&\ \ \vdots \\
\beta_{1}   &= \alpha_{1} - \sum_{2\le j\le k-1}\eta_{1,j}\cdot f_j + \dbar f_1.
\end{aligned}
\right.
\end{equation}

\medskip

We now construct $\eta_{k-1,k} \in C^\infty _{(0,1)} (\tilde{X}',  \Hom (\tilde F_k, \tilde F_{k-1}))$
satisfying $(2)$ and $(3)$.
By \eqref{complexstr}, we have $\dbar \alpha_{k-1}=0$.
Thus, by Hodge theory on $\tilde{X}'$, we can find $\psi_{k-1}\in C^\infty(\tilde{X}',\Hom(\tilde F_k,\tilde F_{k-1}))$ such that
$$
\alpha_{k-1} +\dbar \psi_{k-1}\in \ker D^{(1,0)}_{\tilde h_k^{*}\otimes \tilde h_{k-1}}. 
$$
By the same argument as in Step~2 (using Lemma~\ref{plurharmonic} on $\nu^{-1}(X_{\reg}) \cong  f^{-1}(X_{\reg})$), there exists $s_{k-1} \in H^0 (\tilde{X}',  \Hom (\tilde F_k, \tilde F_{k-1}))$ such that
$$
\eta_{k-1, k} := \alpha_{k-1} +\dbar (\psi_{k-1}+s_{k-1}) \in \ker D^{(1,0)}_{\tilde h_k^{*}\otimes \tilde h_{k-1}},
$$
and $\eta_{k-1, k}$ can be written as the pullback of a form on $X_{\reg}$.
Then $\eta_{k-1, k}$ satisfies $(2)$ and $(3)$.

Set $f_{k-1}:=\psi_{k-1}+s_{k-1}$ and define $(\beta_1, \cdots,  \beta_{k-2})$ by
$$
(\beta_1, \cdots,  \beta_{k-2})
:=
\bigl( \alpha_{1} - \eta_{1,k-1}\cdot f_{k-1}, \cdots,  \alpha_{k-2}- \eta_{k-2,k-1}\cdot f_{k-1}  \bigr).
$$
Consider the complex structure
\[
\bar{\partial}_{\beta, \eta} =
\begin{pmatrix}
\dbar & \eta_{1,2} & \cdots & \eta_{1,k-1} & \beta_1 \\
0 &  \dbar & \cdots & \eta_{2,k-1} & \beta_2\\
0 &  0 & \cdots & \cdots  &\cdots \\
0 &  0 & \cdots & \cdots  & \beta_{k-2} \\
0 &  0 & 0 & \dbar & \eta_{k-1,k}\\
0 &  0 & 0 & 0 & \dbar
\end{pmatrix}.
\]
By \eqref{eq:beta-alpha}, $(\oplus_{i=1}^k \tilde{F_i}, \bar{\partial}_{\beta,\eta})$ is isomorphic to $(\oplus_{i=1}^k \tilde{F_i}, \bar{\partial}_\alpha)$,
hence to $f^{[*]}\cE$ on $\tilde{X}'$.

\medskip

We now construct $\eta_{k-2,k} \in C^\infty _{(0,1)} (\tilde{X}',  \Hom (\tilde F_k, \tilde F_{k-2}))$
satisfying $(2)$ and $(3)$.
Applying \eqref{complexstr} to $\bar{\partial}_{\beta, \eta}$, we have
$$
\dbar (\beta_{k-2}) = - \eta_{k-2, k-1} \wedge \eta_{k-1, k} \quad\text{ on } \tilde{X}'.
$$
The right-hand side is $D^{(1,0)}_{\tilde h_k^{*}\otimes \tilde h_{k-2}}$-closed.
On the other hand, by Hodge theory, we can find $\psi_{k-2}\in C^\infty(\tilde{X}',\Hom(\tilde F_k,\tilde F_{k-2}))$ such that
$$
\beta_{k-2} + \dbar \psi_{k-2}\in \ker D^{(1,0)}_{\tilde h_k^{*}\otimes \tilde h_{k-2}}.
$$
Moreover, since $\eta_{k-2, k-1} \wedge \eta_{k-1, k}$ is the pullback of a form over $X_{\reg}$,
the same argument as in Step~2 yields a section $s_{k-2} \in H^0 (\tilde{X}',  \Hom (\tilde F_k, \tilde F_{k-2}))$ such that
$$
\eta_{k-2,k} := \beta_{k-2} + \dbar (\psi_{k-2}+s_{k-2})
$$
satisfies $(2)$ and $(3)$.

Set $f_{k-2}:=\psi_{k-2}+s_{k-2}$ and define $(\gamma_1, \cdots,  \gamma_{k-3})$ by
$$
(\gamma_1, \cdots,  \gamma_{k-3})
:=\bigl(\beta_{1} - \eta_{1,k-2}\cdot f_{k-2}, \cdots,  \beta_{k-3}- \eta_{k-3,k-2}\cdot f_{k-2}  \bigr).
$$
Consider the complex structure
\[
\bar{\partial}_{\gamma, \eta} =
\begin{pmatrix}
\dbar & \eta_{1,2} & \cdots & \eta_{1,k-1} & \gamma_1 \\
0 &  \dbar & \cdots & \eta_{2,k-1} & \gamma_2\\
0 &  0 & \cdots & \cdots  &\cdots \\
0 &  0 & \cdots & \cdots  &\gamma_{k-3} \\
0 &  0 & \cdots & \cdots  & \eta_{k-2,k} \\
0 &  0 & 0 & \dbar & \eta_{k-1,k}\\
0 &  0 & 0 & 0 & \dbar
\end{pmatrix}.
\]
By \eqref{eq:beta-alpha}, $(\oplus_{i=1}^k \tilde{F_i}, \bar{\partial}_{\gamma, \eta})$ is isomorphic to $(\oplus_{i=1}^k \tilde{F_i}, \bar{\partial}_{\beta,\eta})$,
hence to $f^{[*]}\cE$ on $\tilde{X}'$.
\medskip

Repeating this procedure, we construct $\eta_{k-3,k},\ldots,\eta_{1,k}$ satisfying $(2)$ and $(3)$ such that $\bar{\partial}_\eta$ satisfies $(1)$.
\end{step}
\end{proof}

\section{Numerically flat vector bundles on compact complex manifolds}\label{nonkahler}

The aim of this section is to extend Theorem \ref{thm-stable} in a different direction, namely to the case where $X$ is not necessarily K\"ahler. This gives a positive answer to the question raised in \cite[Remark 1.21]{DPS94}.

We first recall some basic notions concerning stability of sheaves on compact complex manifolds. 
We refer, for example, to \cite[Section 6]{CP25} for more details. 
Let $X$ be a compact complex manifold of dimension $n$. 
A Hermitian form $\omega_{g}$ on $X$ is  said to be a \textit{Gauduchon metric} if
$$
\ddbar \omega_{g}^{n-1}=0.
$$
The Bott--Chern cohomology group $H^{p,q}_{BC}(X)$ and 
the Aeppli cohomology group $H^{p,q}_{A}(X)$ are defined as follows: 
\begin{align*}
H^{p,q}_{BC}(X):&=
\frac{
(\ker \partial \colon \mathcal C^{p,q}(X)\to \mathcal C^{p+1,q}(X))
\cap
(\ker \dbar  \colon \mathcal C^{p,q}(X)\to \mathcal C^{p,q+1}(X))
}{
\Imm \ddbar \colon \mathcal C^{p-1,q-1}(X)\to \mathcal C^{p,q}(X)
}, \\
H^{p,q}_{A}(X):&=
\frac{
\ker \ddbar \colon \mathcal C^{p,q}(X)\to \mathcal C^{p+1,q+1}(X)
}{
(\Imm \partial \colon \mathcal C^{p-1,q}(X)\to \mathcal C^{p,q}(X))
+
(\Imm \dbar \colon \mathcal C^{p,q-1}(X)\to \mathcal C^{p,q}(X))
}.
\end{align*}
The movable cone 
$$\Mov(X)\subset H^{n-1,n-1}_{A}(X,\mathbb R)$$
is the closed cone generated by the classes $[\omega_{g}^{n-1}]$, 
where $\omega_{g}$ ranges over all Gauduchon metrics on $X$. 
The pseudo-effective cone
$$
\Psef(X)\subset H^{1,1}_{BC}(X,\mathbb R)
$$
is the closed cone consisting of classes represented by $d$-closed positive $(1,1)$-currents on $X$.
We next recall some basic properties of stability for torsion-free coherent sheaves on $X$.

\begin{defn}
Let $X$ be a compact complex manifold of dimension $n$, and let $\cE$ be a coherent torsion-free sheaf on $X$. 
Let $\omega_{g}$ be a Gauduchon metric and set $\alpha:=\omega_{g}^{n-1}$. 
We define the slope of $\cE$ with respect to $\alpha$ by
$$
\mu_\alpha(\cE):=
\frac{1}{\rank \cE}
\int_X c_1(\cE)\wedge \omega_{g}^{n-1}.
$$
We say that $\cE$ is stable with respect to $\alpha$ if
\[
\mu_\alpha(\cF) < \mu_\alpha(\cE)
\]
for every subsheaf $\cF \subset \cE$ satisfying
$
1 \leq \rk \cF < \rk \cE$.
\end{defn}

Then, we prove the following lemma.

\begin{lemma}\label{nonkahlerherm}
Let $X$ be a compact complex manifold equipped with a Gauduchon metric $\omega_{g}$.
Let $E$ be a numerically flat vector bundle on $X$, namely, assume that
$c_1(E)=0$ in $H^{1,1}_{BC}(X,\mathbb R)$ and that $E$ is nef.
If $E$ is stable with respect to $\alpha:=\omega_{g}^{n-1}$, then $E$ is Hermitian flat.
\end{lemma}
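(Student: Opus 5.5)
We follow the argument of Theorem~\ref{thm-stable} in the locally free, smooth-base setting, with the K\"ahler form replaced by the Gauduchon metric $\omega_g$. Let $P:=\mathbb{P}(E)$ with projection $p\colon P\to X$ and $L:=\mathcal{O}_{\mathbb{P}(E)}(1)$. Since $E$ is nef, $L$ is nef. Fix a Hermitian form $\omega_P$ on $P$. For each $\e>0$, nefness gives a smooth metric $h_\e$ on $L$ with $\sqrt{-1}\Theta_{h_\e}\geq -\e\,\omega_P$; equivalently, by \cite{DPS94}, smooth metrics on $\Sym^m E$ with curvature $\geq -\frac1m\omega_X\otimes\id$. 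We normalize the induced metrics $H_\e$ on $E$ fibrewise as in Proposition~\ref{addlemma}~(1) and pass to a weak limit. The result is a singular metric $h$ on $L$ with $\sqrt{-1}\Theta_h\geq 0$, whose restriction to a general fibre has the form $\log\sum\lambda_i|\zeta_i|^2$. As in the proof of Theorem~\ref{thm-stable}, we split into two cases according to whether the Lelong-positive locus $P_+$ of $h$ dominates $X$.

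\emph{Case 1: $P_+$ is not dominant.} Then $\mathcal{I}(h^{r+1})$ is trivial over a Zariski open set of $X$. The direct image
\[
p_*\bigl(\mathcal{O}_P(K_{P/X}+(r+1)L)\otimes\mathcal{I}(h^{r+1})\bigr)\subset E\otimes\det E
\]
therefore carries a positively curved $L^2$-metric, by the positivity of direct images \cite{PT18, HPS18}; these results are local over $X$ and need no K\"ahler hypothesis on $X$. Since the inclusion is generically an isomorphism, Proposition~\ref{genersurjet} gives $E\otimes\det E$ a positively curved singular metric. The hypothesis $c_1(E)=0$ in $H^{1,1}_{BC}$ means $\det E$ carries a smooth metric with zero curvature: the Bott--Chern class vanishes, so any curvature form is $\idelbar$-exact and we can correct the metric by that potential. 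Twisting by this metric gives a positively curved metric $H$ on $E$ whose determinant is positively curved. Its weight is then a quasi-psh function with $\idelbar$ of it $\geq 0$ and with Bott--Chern class zero. Integrating against $\omega_g^{n-1}$ and using $\ddbar\omega_g^{n-1}=0$ shows the function is pluriharmonic. Raufi's result \cite{Rau15} together with \cite[Corollary~2.9]{CP17} then shows $H$ is smooth and Hermitian flat.

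\emph{Case 2: $P_+$ is dominant.} We derive a contradiction. By the existence of Hermitian--Einstein metrics for stable bundles over Gauduchon manifolds (Buchdahl, Li--Yau), $E$ carries a metric $G$ with $\sqrt{-1}\Theta_G\wedge\omega_g^{n-1}=c\,\id\,\omega_g^n$. Here $c$ is proportional to $\deg_{\omega_g}E=0$, so $c=0$. Let $g$ be the induced metric on $L$; then $\sqrt{-1}\Theta_g\wedge p^*\omega_g^{n-1}\geq 0$. Proposition~\ref{addlemma}~(2), applied fibrewise, yields $0<\delta<1$ such that
\[
\cG:=p_*\bigl(\mathcal{O}_P(K_{P/X}+(r+1)L)\otimes\mathcal{I}((h^\delta g^{1-\delta})^{r+1})\bigr)
\]
is a nonzero proper subsheaf of $E\otimes\det E$. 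The key input is \cite[Theorem~1.6]{CCP19}, which we use in the form: the $L^2$-metric on $\cG$ satisfies $\sqrt{-1}\Theta_{\det}(\det\cG)\wedge\omega_g^{n-1}\geq 0$. This result is fibrewise and local in the base, and the wedge with $p^*\omega_g^{n-1}$ is only a pointwise positivity condition. Integrating, we use that $\omega_g^{n-1}$ is $\ddbar$-closed, so the degree is independent of the metric on $\det\cG$, including singular ones away from codimension~2. This gives $\mu_\alpha(\cG\otimes(\det E)^*)\geq 0=\mu_\alpha(E)$, contradicting stability.

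The main obstacle is justifying these degree computations in the non-K\"ahler setting. The curvature inequality for $\det\cG$ holds only on a Zariski open set, where $\cG$ is locally free and the $L^2$-metric is smooth. We must show that the singular part contributes nonnegatively to $\int c_1(\cG)\wedge\omega_g^{n-1}$. We plan to write $c_1(\det\cG)$ as the curvature current of the singular $L^2$-metric, which is a positive current plus the divisorial correction. We then use that pairing a $\ddbar$-closed positive form with $\idelbar$ of a quasi-psh function gives zero by Stokes, so that degrees defined via Bott--Chern classes agree with the integrals of the currents.
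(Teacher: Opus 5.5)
Your proposal is correct and follows the paper's proof essentially step by step: the same dichotomy on whether $P_+$ dominates $X$, direct-image positivity plus Raufi's theorem in Case~1, and in Case~2 a Li--Yau Hermitian--Einstein metric, Proposition~\ref{addlemma} and \cite[Theorem~1.6]{CCP19} producing a destabilizing subsheaf of $E\otimes\det E$. Your extra care is well placed, since the paper simply takes a limit of the nef metrics $h_\e$ and the integrated inequality from \cite{CCP19}; three adjustments make it precise: produce the $H_\e$ by the direct-image construction of Lemma~\ref{lemma-weak}, normalize by one constant per $\e$ rather than fibrewise, and note that in Case~2 only $\sqrt{-1}\Theta_{\det H}\wedge\omega_g^{n-1}$ (as a current on $X$), not the curvature current itself, is positive, which is exactly what the Gauduchon--Stokes argument uses.
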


\begin{proof}
The proof follows the same idea as that of Theorem \ref{thm-stable}. 
Recall that $E$ is nef if and only if for every $\varepsilon>0$, there exists a smooth metric $h_\varepsilon$ on 
$L:=\mathcal{O}_{\mathbb P(E)}(1)$ such that
\[
\sqrt{-1}\Theta_{h_\varepsilon}\bigl(\mathcal{O}_{\mathbb P(E)}(1)\bigr)\ge -\varepsilon\,\omega_{g'}.
\]
Here $\omega_{g'}$ is a fixed Hermitian metric on $\mathbb P(E)$. 
Let $r$ be the rank of $E$. Consider the natural projection
$
\pi \colon \mathbb P(E)\to X.
$ and the hyperplane bundle 
$
L=\mathcal O_{\mathbb P(E)}(1).
$
Since $E$ is nef and $L$ is relatively ample, there exists a sequence of smooth Hermitian metrics $h_\varepsilon$ on $L$ such that
$$
\sqrt{-1}\Theta_{h_\varepsilon}(L)\ge -\varepsilon \pi^* \omega_{g}.
$$
Fix a Gauduchon metric $\omega_{g'}$ on $\mathbb P(E)$. Since $\mathbb P(E)$ is compact, the quantities
$$
\int_{\mathbb P(E)}
\bigl(\sqrt{-1}\Theta_{h_\varepsilon}(L)+\varepsilon\pi^*\omega_{g}\bigr)
\wedge \omega_{g'}^{n+r-2}
$$
are uniformly bounded.
After a suitable normalization and after passing to a subsequence, we may assume that
$\sqrt{-1}\Theta_{h_\varepsilon}(L)$ weakly converges 
to a positive current $T \geq 0$  such that 
$$
[T]=c_1(L)\in H^{1,1}_{BC}(\mathbb P(E),\mathbb R).
$$
Let $h$ be a singular metric on $L$ such that
$
T=\sqrt{-1}\Theta_h(L).
$
Set
\[
P_{+}:=\{x\in \mathbb P(E)\mid \nu(T,x)>0\}.
\]

We divide the argument into two cases.

\setcounter{case}{0}
\begin{case}
Suppose that $P_{+}$ is not dominant over $X$.
By applying the positivity theorem for direct image sheaves \cite{PT18}, the metric $h^{r+1}$ induces an $L^2$ metric $H$ on
\[
\det E\otimes E
=
p_*\bigl(
\mathcal O_P(K_{\mathbb P(E)/X}+(r+1)L)
\bigr)
\]
such that $(\det E\otimes E,H)$ is positively curved on $X$.
Since $c_1(E)=0$, it follows that $(\det E\otimes E,H)$ is Hermitian flat. Hence $E$ is also Hermitian flat.
\end{case}

\begin{case}
Suppose that $P_{+}$ is dominant over $X$.
We  show that this case cannot occur.
Since $E$ is stable and $X$ is smooth, 
by \cite{LY87}, there exists a smooth Hermitian--Einstein metric $G$ on $E$, namely,
\[
\sqrt{-1}\Theta_G(E)\wedge \omega_{g}^{\,n-1}\equiv 0
\qquad\text{on }X.
\]
The metric $G$ induces a quotient metric $h_1$ on $L$.

We now construct a destabilizing subsheaf.
By Proposition \ref{addlemma}, there exists $0<\delta<1$ such that
\[
\cG
=
\pi_*
\Bigl(
\mathcal O_P(K_{\mathbb P(E)/X}+(r+1)L)
\otimes
\mathcal I\bigl(
(h^\delta h_1^{1-\delta})^{r+1}
\bigr)
\Bigr)
\]
is a nonzero proper subsheaf of $E\otimes\det E$.
By construction, we have
\[
\sqrt{-1}\Theta_{h_1}(L)\wedge \pi^*\omega_{g}^{\,n-1}\ge 0
\qquad\text{on }\mathbb P(E).
\]
Together with $\sqrt{-1}\Theta_h(L)\ge 0$, this yields
\[
\sqrt{-1}\Theta_{h^\delta h_1^{1-\delta}}(L)
\wedge \pi^*\omega_{g}^{\,n-1}
\ge 0.
\]
Applying \cite[Theorem 1.6]{CCP19}, we obtain
\[
\sqrt{-1}\Theta_{\det H}(\det\cG)\wedge \omega_{g}^{\,n-1}\ge 0
\qquad\text{on }X,
\]
where $H$ denotes the induced $L^2$ metric on $\cG$.
Therefore, we obtain 
\[
\int_X c_1(\cG)\wedge \omega_{g}^{n-1}\ge 0.
\]
Since $\cG\otimes(\det E)^*$ is a nonzero proper subsheaf of $E$, it destabilizes $E$, contradicting the stability of $E$.
\end{case}
\end{proof}

Combining Lemma \ref{nonkahlerherm} with the argument of \cite[Section 2]{DPS94}, 
we obtain the main theorem of this section.

\begin{thm}\label{DPSnonkahler}
Let $X$ be a compact complex manifold, and let $E$ be a locally free sheaf on $X$.
If $E$ is numerically flat, then $E$ admits a filtration by holomorphic subbundles
\begin{equation}\label{filtra1}
\{0\}=E_0\subset E_1\subset\cdots\subset E_k=E
\end{equation}
such that each graded quotient $E_i/E_{i-1}$ is Hermitian flat.
In particular, all Chern classes $c_k(E)$ vanish.
\end{thm}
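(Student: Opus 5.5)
We argue by induction on the rank $r$ of $E$, following \cite[Section 2]{DPS94} and using Lemma~\ref{nonkahlerherm} as the base input. Fix a Gauduchon metric $\omega_g$ on $X$ and set $\alpha:=\omega_g^{n-1}$. If $E$ is stable with respect to $\alpha$, then Lemma~\ref{nonkahlerherm} shows directly that $E$ is Hermitian flat, and the filtration $0\subset E$ suffices.

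Otherwise, let $\cF\subset E$ be a saturated subsheaf with $0<\rk\cF<r$ of minimal rank among those of maximal slope. Nefness of $E$ gives $\mu_\alpha(\cF)\le 0$. The argument is that the torsion-free quotient $E/\cF$ is generically a quotient of a nef bundle, so $\det(E/\cF)$ carries a metric whose curvature is bounded below by $-\e\omega$ for all $\e$. Hence $\int c_1(E/\cF)\wedge\alpha\ge0$, using $\ddbar\alpha=0$ to make the degree well defined. Together with $c_1(E)=0$ this gives $\mu_\alpha(\cF)\le 0$; semistability of $E$ itself also follows.

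Suppose first that $E$ is not stable, and take a subsheaf $\cF$ with $\mu_\alpha(\cF)=0$ that is stable, i.e.\ of minimal rank. The key step, which is \cite[Proposition 1.16]{DPS94}, is to show that $\cF$ is a subbundle and that both $\cF$ and $E/\cF$ are numerically flat. Set $s=\rk\cF$. The inclusion gives a section $\tau$ of $\Lambda^sE\otimes\det\cF^{-1}$. I would show that $\det\cF^{-1}$ is nef with $c_1=0$ in $H^{1,1}_{BC}$, and therefore Hermitian flat. Roughly, $\det\cF\to\Lambda^sE$ together with nefness of $\Lambda^s E^*$ gives a positivity statement, and the degree-zero condition forces flatness; in the non-K\"ahler setting one uses that a pseudo-effective line bundle of $\alpha$-degree zero with $\ddbar\alpha=0$ has $c_1=0$ in Bott--Chern cohomology. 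The section $\tau$ then lies in a nef bundle twisted by a flat line bundle, and the DPS argument (\cite[Lemma 1.20]{DPS94}) shows that $\tau$ vanishes nowhere. Hence $\cF$ is a subbundle. Both $\cF$ and $Q=E/\cF$ are then nef with $c_1=0$: $Q$ is nef as a quotient, and $\cF^*$ is nef as a quotient of $E^*$, which is itself nef because $E$ is numerically flat. So both are numerically flat of smaller rank.

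Applying the induction hypothesis to $\cF$ (in fact $\cF$ is stable, so Lemma~\ref{nonkahlerherm} applies directly) and to $Q$, and pulling back the filtration of $Q$ to $E$, gives the required filtration by subbundles with Hermitian flat graded pieces. The vanishing of all Chern classes then follows because each Hermitian flat quotient has zero Chern forms, and Chern classes are multiplicative in exact sequences of bundles.

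I expect the main obstacle to be the subbundle step without the K\"ahler hypothesis. Specifically, one must verify that a line bundle $\det\cF$ with $\int c_1\wedge\alpha=0$, whose dual is dominated by a nef bundle, is numerically trivial with a flat metric. In DPS this relies on Hodge theory, so here it must instead be replaced by duality between $\Psef(X)$ and $\Mov(X)$, i.e.\ Lamari-type arguments with Gauduchon metrics.
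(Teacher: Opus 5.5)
Your proposal is correct and follows the paper's proof. Both fix a Gauduchon metric and deduce semistability, then take a stable saturated subsheaf of slope zero. Both use the argument of \cite[Theorem 1.18, Step 1 and Lemma 1.20]{DPS94}, which the paper notes needs no K\"ahler hypothesis, to show this subsheaf is a numerically flat subbundle. Both then apply Lemma~\ref{nonkahlerherm} to it and induct on the rank via the numerically flat quotient.

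The obstacle you flag is milder than you expect. A positive closed current in $c_1(\det\cF^{-1})$ with zero mass against the strictly positive $\ddbar$-closed form $\omega_g^{n-1}$ must vanish. Hence $c_1(\det\cF)=0$ in $H^{1,1}_{BC}$ and $\det\cF$ is Hermitian flat, with no Lamari-type duality needed. One small correction: what Lemma 1.20 actually uses is that $\tau$ is a section of the \emph{dual} of the nef bundle $\Lambda^sE^*\otimes\det\cF$. Being a section of a nef bundle twisted by a flat line bundle is not enough on its own.
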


\begin{proof}
The proof is the same as that of Theorem~\ref{thm-main}, but is
considerably simpler in the present setting, where $X$ is smooth and $E$ is
locally free.
Fix a Gauduchon metric $\omega_{g}$. Since $E$ is numerically flat, we can show that $E$ is semistable with respect to $\omega_{g}^{n-1}$.
Let $\cF\subset \mathcal O_X(E)$ be a reflexive $\omega_{g}^{n-1}$-stable subsheaf. 
By the same argument as in \cite[Theorem 1.18, Step 1 and Lemma 1.20]{DPS94}, the sheaf $\cF$ is in fact a numerically flat subbundle of $E$. Note that the proof of \cite[Theorem 1.18, Step 1 and Lemma 1.20]{DPS94} does not require the K\"ahler assumption.
Applying Lemma \ref{nonkahlerherm} to $\cF$, we conclude that $\cF$ is Hermitian flat. The theorem then follows by induction on the rank, since $E/\cF$ is numerically flat again.
\end{proof}

It should be possible to generalize Theorem \ref{DPSnonkahler} to certain singular settings. However, the following question seems particularly interesting.

\begin{question}\label{flatconqq}
Let $X$ be a compact complex manifold, and let $E$ be a locally free sheaf on $X$.
If $E$ is numerically flat, does $E$ admit a holomorphic flat connection compatible with the filtration \eqref{filtra1}?
\end{question}

Recall that, when $X$ is K\"ahler, Question \ref{flatconqq} has an affirmative answer by \cite{Sim92,Den21}; 
the key tool is the $\ddbar$-lemma. It is unclear whether Question \ref{flatconqq} remains true for manifolds that do not satisfy the $\ddbar$-lemma.

\bibliographystyle{alpha}

\end{document}